\documentclass[11pt,dvipsnames]{amsart}
\usepackage{amsmath,amssymb,amsfonts, mathtools}
\usepackage[mathscr]{eucal}
\usepackage{fullpage}
\usepackage{quiver}

\usepackage{tikz, tikz-cd}
\usetikzlibrary{matrix,arrows,decorations.pathmorphing,cd,patterns,calc,backgrounds,patterns}
\tikzset{dummy/.style= {circle,fill,draw,inner sep=0pt,minimum size=1.2mm}}
\tikzset{vertex/.style={fill, circle, minimum size=.1cm, inner sep=0pt}}

\usepackage[pagebackref, colorlinks, citecolor=DarkOrchid, linkcolor=NavyBlue, urlcolor=NavyBlue]{hyperref}
\hypersetup{linktoc=all,} %set to all if you want both sections and subsections linked

\usepackage{comment}
\usepackage{hyperref}

\usepackage[textwidth=25mm, textsize=tiny]{todonotes}
\usepackage[final]{microtype}

\usepackage{enumerate}

\usepackage{appendix}

\numberwithin{equation}{section} 
\numberwithin{figure}{section}
\usepackage[nameinlink,capitalise,noabbrev]{cleveref}

\newcommand{\newrefformat}[2]{}

\usepackage{stmaryrd}%for mapsfrom

%for function domain restrictions
\newcommand\restr[2]{{% we make the whole thing an ordinary symbol
  \left.\kern-\nulldelimiterspace % automatically resize the bar with \right
  #1 % the function
  \vphantom{\big|} % pretend it's a little taller at normal size
  \right|_{#2} % this is the delimiter
  }}

%-----------------------%-----------------------%-----------------------%-----------------------%-----------------------%-----------------------

%Definitions
\crefname{lemma}{Lemma}{Lemmas}
\crefname{theorem}{Theorem}{Theorems}
\crefname{definition}{Definition}{Definitions}
\crefname{proposition}{Proposition}{Propositions}
\crefname{remark}{Remark}{Remarks}
\crefname{corollary}{Corollary}{Corollaries}
\crefname{equation}{Equation}{Equations}
\crefname{construction}{Construction}{Constructions}
\crefname{ex}{Example}{Examples}
\crefname{appsec}{Appendix}{Appendices}
\crefname{subsection}{Subsection}{Subsections}

\theoremstyle{plain}
\newtheorem{theorem}[equation]{Theorem}
\newtheorem{corollary}[equation]{Corollary}
\newtheorem{proposition}[equation]{Proposition}
\newtheorem{lemma}[equation]{Lemma}

\newtheorem{introtheorem}{Theorem}
 
\crefname{introtheorem}{Theorem}{Theorems}

\theoremstyle{definition}
\newtheorem{definition}[equation]{Definition}
\newtheorem{example}[equation]{Example}
\newtheorem{remark}[equation]{Remark}

\newtheorem*{notation}{Notation}

%-------------------------------
%-------------------------------
%----------SHORTCUTS----------
%-------------------------------

\newcommand{\bS}{\mathbb{S}}

\newcommand{\Map}{\mathrm{Map}}

\newcommand{\cat}[1]{\mathscr{#1}}

\DeclareMathOperator{\Cat}{\mathscr{C}\textup{at}}
\DeclareMathOperator{\Perm}{\mathscr{P}\textup{erm}\mathscr{C}\textup{at}}
\DeclareMathOperator{\PC}{PC}
\DeclareMathOperator{\Sym}{\mathscr{S}\textup{ym}\mathscr{M}\textup{on}\mathscr{C}\textup{at}}
\newcommand{\smc}{\mathscr{S}\textup{ym}\mathscr{M}\textup{on}\mathscr{C}\textup{at}_\infty}
\newcommand{\Mack}{\operatorname{Mack}}
\newcommand{\Sp}{{\rm Sp}}

\newcommand{\op}{\mathrm{op}}

\newcommand{\ob}{\operatorname{Ob}}

\newcommand{\id}{\operatorname{id}}

\newcommand{\Fun}{\operatorname{Fun}}
\newcommand{\ps}{{\rm ps}}

\newcommand{\GMMO}{\mathrm{GMMO}}
\newcommand{\SSS}{\mathbb{S}}
\newcommand{\lvl}{\mathrm{lvl}}

\renewcommand{\AA}{\mathbb{A}}
\newcommand{\BB}{\mathbb{B}}
\newcommand{\AAA}{\widehat{\AA}}
\newcommand{\BBB}{\widehat{\BB}}

\newcommand{\cC}{\cat C}
\newcommand{\cD}{\cat D}
\newcommand{\cM}{\cat M}
\newcommand{\cR}{\cat R}
\newcommand{\cO}{\cat O}
%-----------------------------
%todos:
\usepackage[textwidth=25mm, textsize=tiny]{todonotes}
% David's notes

% Maxine's notes

% Maxie's notes

%-----------------------------

\begin{document}
\begin{abstract}
We provide a unifying approach to different constructions of the algebraic $K$-theory of equivariant symmetric monoidal categories. A consequence of our work is that the $K$-theory functor of Bohmann--Osorno models all connective genuine $G$-spectra.
\end{abstract}

\author[M.E.\ Calle]{Maxine E.\ Calle}
\address{Department of Mathematics, University of Pennsylvania,
209 South 33rd Street,
Philadelphia, PA, 19104, USA}
\email{callem@sas.upenn.edu}

\author[D.\ Chan]{David Chan}
\address{Department of Mathematics, Michigan State University, 619 Red Cedar Road, East Lansing, MI 48824, USA}
\email{chandav2@msu.edu}

\author[M.\ P\'eroux]{Maximilien P\'eroux}
\address{Department of Mathematics, Michigan State University, 619 Red Cedar Road, East Lansing, MI 48824, USA}
\email{peroux@msu.edu}

\title[Equivariant $K$-theory of symmetric monoidal Mackey functors]{Equivariant Algebraic $K$-theory\\of symmetric monoidal Mackey functors}
%\date{\today}

\keywords{Equivariant algebraic $K$-theory, symmetric monoidal categories, inverse $K$-theory, genuine $G$-spectra, Mackey functors}

\renewcommand{\subjclassname}{\textup{2020} Mathematics Subject Classification}
\subjclass[2020]{
55P91 %Equivariant homotopy theory in algebraic topology
19D23 %higher algerbraic K theory of symmetric monodial categories
18N60 %infinity categories
18N10 %2-categories
55P42 %Stable homotopy theory, spectra
18F25 %Algebraic K-theory and L-theory (category-theoretic aspects)
}
\maketitle

%\setcounter{tocdepth}{1}
%\tableofcontents
%---------------------------------
%---------------------------------

\section{Introduction}
Inspired by Quillen's work on the higher algebraic $K$-theory of rings, Segal \cite{segal74} defined a way to produce spectra via the $K$-theory of any symmetric monoidal category using a homotopy theoretic ``group completion'' construction. Since spectra are like homotopical versions of abelian groups, his construction $\cat C\mapsto K(\cat C)$ produces a connective spectrum. The corresponding infinite loop space structure on $K(\cat C)$ explicitly comes from the symmetric monoidal structure on $\cat C$. This $K$-theory construction gives us a way to produce spectra from symmetric monoidal categories, and a natural question to ask is whether all spectra can be modeled this way.

In the 1990s, Thomason \cite{Thomason95} showed Segal's construction gives rise to all connective spectra --- spectra whose negative homotopy groups are trivial. In other words, for every connective spectrum $E$ there is some symmetric monoidal category $\cat C$ so that $K(\cat C)\simeq E$. Thomason proves this result by producing an explicit inverse. A more modern proof of this theorem is given by Mandell in \cite{Mandell/invKT}, and a recent result of Ramzi--Sosnilo--Winges extends this result to show that every spectrum can be modeled by the non-connective $K$-theory of a stable $\infty$-category \cite{RamziSosniloWinges}.

Thomason's and Mandell's work provides a model for the connective stable homotopy category, which can be useful for computations or constructions. Indeed, a large part of Thomason's motivation was to offer a solution to a long-standing problem in stable homotopy theory related to multiplicative structures. The stable homotopy category comes equipped with a well-behaved smash product, but there were technical obstructions to lifting this multiplicative structure to a category of spectra. In fact, Lewis showed there is no such good category with reasonable requirements \cite{lewis}. Thomason's coordinate-free approach using symmetric monoidal categories and $K$-theory was one of the earlier successful attempts to address this issue.
Although Thomason's formulation of the smash product via symmetric monoidal categories has been subsumed by developments in modern stable homotopy theory \cite{EKMM, SS, MMSS, MM,HA}, his work provides a blueprint for modeling generalizations of spectra that are not as well understood.

In the context of equivariant stable homotopy theory, we study an equivariant generalization of spectra called \textit{genuine $G$-spectra}.  While there are several models for genuine $G$-spectra, one widely-used approach is via \textit{Mackey functors}. Essentially, a $G$-Mackey functor in a category $\cat D$ is a collection of objects $\{D(H)\}_{H\leq G}$ in $\cat D$ which are connected by certain $\cat D$-morphisms called restrictions, transfers, and conjugations. Mackey functors should be thought of as the equivariant version of monoid objects in $\cat D$; for more details see \cref{sec:Mack}. Taking $\cat D$ to be a model category of spectra, we obtain a notion of \textit{spectral Mackey functor} which models genuine $G$-spectra. This idea was formalized in the context of enriched category theory in work of Guillou--May \cite{GuillouMay:2011}. An $\infty$-categorical approach was developed by Barwick and coauthors Glasman--Shah \cite{BarwickOne, Barwick2} (see also \cite[Appendix A]{CMNN}). 
%We denote the former version by $\Mack_{\Sp}(\Sp)$ and the latter by $\Mack_\infty(\Sp_\infty)$.

One source of genuine $G$-spectra comes from an equivariant version of the $K$-theory of symmetric monoidal categories. Both frameworks for spectral Mackey functors mentioned above are amenable to a version of equivariant algebraic $K$-theory, one due to Bohmann--Osorno \cite{BohmannOsorno:2015} and the other to Barwick \cite{BarwickOne,Barwick2}. These two constructions are similar in flavor but vary in that they are formulated in the contexts of enriched categories and $\infty$-categories, respectively, and hence are built for different notions of \textit{equivariant} symmetric monoidal categories. Other authors have used similar techniques to produce genuine $G$-spectra via different $K$-theory constructions, see \cite{malkiewich/merling:2016, CalleChanMejia:Linearization,Yau:multiplicativeKtheory,GMMO-final}.

%While ordinary symmetric monoidal categories are well-understood,
%(as commutative monoids in categories), 
%the equivariant generalization is not obvious. 
Just as genuine $G$-spectra can be modeled as spectral Mackey functors, we would expect the correct notion of equivariant symmetric monoidal structure to be some version of a Mackey functor valued in symmetric monoidal categories. 
The $K$-theory of such an object should then be a levelwise application of non-equivariant $K$-theory. Indeed, this is effectively the approach in both the enriched and $\infty$-categorical setting.  

In this paper, we define the $K$-theory of \textit{symmetric monoidal Mackey functors}, as considered by Hill--Hopkins in \cite{HillHopkins}, which can be thought of as a collection of symmetric monoidal categories whose restrictions and transfers are strictly unital strong monoidal functors.  Our main result is that every connective genuine $G$-spectrum can be modeled up to weak equivalence by the $K$-theory of a such a symmetric monoidal Mackey functor via a functor $\mathbb{K}$ that is morally the same as the construction in \cite{BohmannOsorno:2015}.

%The first\mcnote{ask David if this is true} attempt to produce genuine $G$-spectra in this way was \cite{BohmannOsorno:2015}, wherein Bohmann--Osorno construct a $K$-theory machine that takes as input \textit{$\PC$-Mackey functors},\mcnote{consider changing this name} called categorical Mackey functors in \cite{BohmannOsorno:2015}. A $\PC$-Mackey functor can be thought of as a collection of permutative categories --- a stricter version of symmetric monoidal categories --- whose restrictions and transfers are strictly unital lax functors.

\begin{introtheorem}[\cref{cor:BO is all}]\label{intro thomason result}
    For any connective genuine $G$-spectrum $E$ there is a symmetric monoidal Mackey functor $F$ so that $\mathbb{K}(F)$ and $E$ are weakly equivalent as genuine $G$-spectra.
\end{introtheorem}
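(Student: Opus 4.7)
The plan is to combine the paper's unifying comparison between the Bohmann--Osorno and Barwick $K$-theory machines with Mandell's non-equivariant inverse $K$-theory, applied pointwise to a spectral Mackey functor model of $E$. First, I would use the Barwick--Glasman--Shah description to present the connective genuine $G$-spectrum $E$ as an $\infty$-categorical Mackey functor $\widetilde{M}\colon \mathcal{A}(G)^{\op}\to\Sp^{\geq 0}$ landing in connective spectra. Mandell's inverse $K$-theory, interpreted at the $\infty$-categorical level, provides a section $\mathcal{P}$ (up to natural equivalence) of the non-equivariant $K$-theory functor $K\colon \smc \to \Sp^{\geq 0}$.

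Post-composing $\widetilde{M}$ with $\mathcal{P}$ produces an $\infty$-Mackey functor $\widetilde{F}\colon \mathcal{A}(G)^{\op}\to \smc$ whose levelwise $K$-theory recovers $\widetilde{M}$, and hence, via the Barwick model, the $G$-spectrum $E$. The next step is to rigidify $\widetilde{F}$ into a strict $\PC$-Mackey functor $F\in\Mack_{\PC}(\Perm_{\PC})$ modeling the same $\infty$-Mackey functor. Once the rigidification is in place, the comparison between $\mathbb{K}$ and the Barwick $K$-theory functor, which is the main unifying result of the paper, immediately yields $\mathbb{K}(F)\simeq E$ in genuine $G$-spectra.

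The hard part will be the rigidification step. The category $\Mack_{\PC}(\Perm_{\PC})$ is defined as functors enriched over the multicategory $\PC$ of permutative categories with strictly unital lax morphisms, so producing $F$ requires strictifying both the symmetric monoidal structures at each level and the lax coherence data of the restriction and transfer maps. Mandell's inverse $K$-theory naturally yields only symmetric monoidal $\infty$-categories with (pseudo)functorial structure between them; it is the paper's enriched-versus-$\infty$-categorical comparison theorems that must be invoked to replace this data by honest $\PC$-enriched data without altering the underlying $K$-theory. Granting those comparison and strictification results, the theorem follows by assembling the construction as above: apply $\mathcal{P}$ levelwise, rigidify to $F$, and translate through the identification of $\mathbb{K}$ with the Barwick $K$-theory to obtain $\mathbb{K}(F)\simeq E$.
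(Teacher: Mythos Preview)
Your proposal is correct and follows essentially the same route as the paper: represent $E$ as an $\infty$-Mackey functor in connective spectra, apply Mandell's inverse $K$-theory levelwise (using that it is additive, which the paper records as a lemma), rigidify the resulting pseudo-functorial data to a $\PC$-Mackey functor via the comparison $\Mack_{\ps}(\Perm_2)\to\Mack_{\PC}(\Perm_{\PC})$, and then invoke the main comparison diagram (\cref{thm:main}) to identify $\mathbb{K}(F)$ with $E$. The only point you leave implicit that the paper makes explicit is that the rigidification factors through the intermediate category $\Mack_{\ps}(\Perm_2)$ of pseudo-functors via the functors $\Psi\circ j$, and that the commutativity of the comparison diagram is checked on the generating representables $A_X$ using the two equivariant Barratt--Priddy--Quillen theorems; but since you defer those steps to the paper's comparison results, this is not a gap.
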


The functor $\mathbb{K}$ above is adapted slightly from the one in \cite{BohmannOsorno:2015} to resolve some technical complications that arise from working in the enriched setting. In particular, as discussed in \cite[Remark 10.5.5]{johnsonYau:homotopyTheoryOfMackey}, it is unclear whether Bohmann--Osorno's original construction lands in a category of spectral Mackey functors that is Quillen equivalent to the category of genuine orthogonal $G$-spectra. As discussed in \cref{sec:main}, this is due to some subtleties around the compatibility between taking opposite categories and change of enrichment. In \cref{thm:BO}, we show that Bohmann--Osorno's construction can be adapted to produce genuine $G$-spectra; this follows from a covariant version of the Guillou--May theorem \cite{GuillouMay:2011}, proved in \cref{app:GM thm}.

\begin{introtheorem}[\cref{thm:GM no op}]
    There is a zig-zag of Quillen equivalences between the model categories $\Sp^G$ with the stable model structure and the enriched functor category $\Fun_{\Sp}(\mathcal{A}^G_{\Sp}, \Sp)$ with the projective level model structure, where $\mathcal{A}^G_{\Sp}$ is the spectrally enriched Burnside category of \cite{GuillouMay:2011}. 
\end{introtheorem}

An advantage of the functor $\mathbb{K}$ is that it can be used to construct very explicit examples of genuine $G$-spectra (see e.g.\ \cref{ex:G-BPQ} and \cref{ex:ordinary Mackey functor}), and the input of symmetric monoidal Mackey functors are easier to construct than the strict categorical Mackey functors of Bohmann--Osorno. In fact, as we later observe, several previous constructions of equivariant algebraic $K$-theory that use Bohmann--Osorno's machinery, such as \cite{CalleChanMejia:Linearization, malkiewich/merling:2016}, can also be obtained via our construction (see \cref{ex:coefficient ring K theory}). Moreover, every symmetric monoidal Mackey functor can be strictified to produce a (covariant) categorical Mackey functor in the sense of \cite{BohmannOsorno:2015} (\cref{cor:compare ps and PC} and \cref{lem: compare perm and sym}). Thus a consequence of \cref{intro thomason result} is that the covariant version Bohmann--Osorno's $K$-theory machine is essentially surjective, proving a conjecture of Elmendorf \cite{Elmendorf2021}. 

Previous approaches to this conjecture have considered an inverse $K$-theory construction similar to Mandell's. The immediate obstruction to confirming this conjecture arises from needing to check some technical properties of inverse $K$-theory, although some of the necessary properties of $K^{-1}$ have been verified by Johnson--Yau \cite{JohnsonYau/invKT} (see also \cite{Elmendorf2021} and \cite[Section 10.3]{yau:21}). Rather than considering multifunctorial properties of inverse $K$-theory, we first prove an $\infty$-categorical version of \cref{intro thomason result} (see \cref{lem:Mack ps Perm is infty Mack}), and produce a comparison with Bohmann--Osorno's $K$-theory. 

\begin{introtheorem}[\cref{thm:main}]\label{introthm:KTs agree}
The functor $\mathbb{K}$ and its $\infty$-categorical analogue produce equivalent genuine $G$-spectra.
\end{introtheorem}

%To prove \cref{introthm:KTs agree}, we directly relate various categories of Mackey functors. In particular, we show that there is an equivalence of $\infty$-categories\[N_*(\Mack_{\ps}(\Sym_2)) \simeq \Mack_{\infty}(N^D_*(\Sym_2)),\] where $\Mack_{\ps}(\Sym_2)$ is a category of symmetric monoidal Mackey functors (see \cref{sec:cat Mack}) and $N^D_*(\Sym_2)$ is the $2$-categorical nerve of a $2$-category of symmetric monoidal categories (\cref{lem:dusk N and additive fun}). These comparisons allow us to move between the variations on categorical Mackey functors and effectively compare the different $K$-theory constructions.

Each of the categories of Mackey functors comes with a notion of \textit{stable equivalence}, which are morphisms of Mackey functors that become levelwise equivalences after $K$-theory. We show that upon formally inverting these stable equivalences, there is no difference between some notions of categorical Mackey functor. Moreover, these localizations model the subcategory of connective spectra within the equivariant stable homotopy category.

\begin{introtheorem}[\cref{lem:Mack ps Perm is infty Mack,cor:Mack Mandell,cor:str becomes equiv after loc}]\label{introthm:eq Thomason}
Equivariant $K$-theory induces equivalences of homotopy categories between
\begin{itemize}
    \item[--] the homotopy theory of symmetric monoidal Mackey functors;  
    \item[--] the homotopy theory of symmetric monoidal $\infty$-Mackey functors;
    \item[--] the homotopy category of connective genuine $G$-spectra.
\end{itemize}
% After localizing with respect to stable equivalences $W_\mathbb{K}$, equivariant $K$-theory induces equivalences of homotopy categories\[
% \Mack_{\ps}(\Sym_2)[W_{\mathbb{K}}^{-1}] \simeq h\Mack_\infty(\smc)\simeq h\Sp^G_{\geq 0},
% \] where $h\Sp^G_{\geq 0}$ is the homotopy category of connective genuine $G$-spectra.
\end{introtheorem}

In particular, as stated in \cref{intro thomason result}, our work yields an equivariant version of Thomason's theorem. Other equivariant versions of Thomason's theorem have been proved by Lenz \cite{lenz:Gglobal,lenz:symmonGcat} for different definitions of equivariant symmetric monoidal category than the ones we consider in this paper.  Namely, Lenz models equivariant symmetric monoidal categories by either $G$-objects in symmetric monoidal categories or operads over the genuine equivariant Barratt--Eccles operad of \cite{GuillouMay:PermutativeGCats}. It would be interesting to directly compare his results with ours, although we do not pursue this comparison here.

In addition to providing an equivariant analogue of Thomason's theorem, another goal of this paper is to lay out an approach to symmetric monoidal structures in the context of equivariant algebra. These structures have seen a tremendous amount of interest recently, due in large part to the role they play in Hill, Hopkins, and Ravenel's solution to the Kervaire invariant one problem \cite{HHR}.  In this paper we address, in a systematic way, the kinds of equivariant categorical structures which arise in the context of \cite{HHR} and equivariant homotopy theory more broadly. In future work, we hope to investigate how these structures interact with equivariant ring spectra. Although \cref{introthm:eq Thomason} shows that equivariant $K$-theory induces an equivalence between symmetric monoidal Mackey functors and connective genuine $G$-spectra, we know it does not preserve multiplicative structure: $K$-theory sends ring objects in symmetric monoidal Mackey functors to spectral Green functors, which do not represent genuine equivariant ring spectra (see \cref{remark: green not tambara}).

\subsection{Outline}
In \cref{sec:infty inv KT}, we briefly review the story of inverse $K$-theory for permutative categories and recontextualize it in the language of $\infty$-categories (see, in particular, \cref{thm:Mandell for infty cats} and \cref{cor:equiv smc and sp}).
\cref{sec:Mack} describes the different approaches to Mackey objects: Bohmann--Osorno's categorical Mackey functors (\cref{sec:cat Mack}), permutative Mackey functors (\cref{sec:perm Mack}), $\infty$-categorical Mackey functors (\cref{sec:infty Mack}), and spectral Mackey functors (\cref{sec:Sp Mack}).
The first discussion of equivariant $K$-theory is in \cref{sec:main}, where we revisit Bohmann--Osorno's $K$-theory machine. The reader who is primarily interested in \cref{intro thomason result} may prefer to read the introduction to \cref{sec:Mack} and then proceed directly to \cref{sec:main}, where we state and prove the main results. There is one appendix, \cref{app:GM thm}, which gives the proof of the covariant Guillou--May theorem (\cref{thm:GM no op}). 

\subsection{Notation and conventions}

We fix a finite group $G$ throughout.
By an $\infty$-category, we mean a quasicategory in the sense of \cite{HTT, HA}.
Since this paper compares several different frameworks for the same ideas, we will need to introduce some non-standard notation.
\begin{itemize}
    \item $\mathcal{A}^G$ denotes the $1$-category whose objects are finite $G$-sets and morphisms are isomorphism classes of spans, and $\Mack(-) = \Fun^{\rm add}(\mathcal{A}^G, -)$. 
    \item $\mathcal{A}^G_2$ denotes the $(2,1)$-category whose objects are finite $G$-sets, 1-morphisms are spans of finite $G$-sets, and $2$-morphisms are isomorphisms of spans; and $\Mack_{\ps}(-) = \Fun^{\rm add}_{\ps}(\mathcal{A}^G_2,-)$ where $\Fun^{\rm add}_{\ps}$ denotes strictly unital (lax) additive pseudo-functors.  In this paper, we will only consider pseudo-functors which are strictly unital.
    \item $\mathcal{A}^G_{\PC}$ denotes the $\PC$-enriched Burnside category, and $\Mack_{\PC}(-) =\Fun^{\rm add}_{\PC}(\mathcal{A}^G_{\PC}, -)$.
    \item $\mathcal{A}^G_{\rm Sp}$ denotes the spectrally enriched Burnside category, and $\Mack_{\Sp}(-)=\Fun^{\rm add}_{\Sp}(\mathcal{A}^G_{\Sp},-)$.
    \item For any $2$-category $\cat C$, let $N_*^D(\cat C)$ denote the Duskin nerve of $\cat C$. The \emph{effective Burnside category} is $\mathcal{A}^G_{\rm eff} = N_*^D(\mathcal{A}^G_2)$, and $\Mack_\infty(-) = \Fun_\infty(\mathcal{A}^G_{\rm eff}, -)$.
    \item $\Perm_2$ denotes the (2,1)-category of permutative categories, strictly unital strong monoidal functors, and natural isomorphisms; $\Perm_{\PC}$ denotes the $\PC$-enriched category of permutative categories; and $\Perm_{\Sp}$ denotes the spectrally enriched category of permutative categories.
    \item  We use $\Sp_\infty$ to denote the $\infty$-category of spectra, and $\Sp$ to denote the category of orthogonal spectra \cite{MM}; $h\Sp$ denotes the stable homotopy category and $h\Sp_{\geq 0}$ denotes the full subcategory spanned by connective spectra. 
    \item We write $\Sp^G$ to denote the category of orthogonal $G$-spectra indexed on a complete universe, $\Sp^G_{\infty}$ for the associated $\infty$-category, and $h\Sp^{G}$ for the equivariant stable homotopy category.
\end{itemize}

\subsection*{Acknowledgements} We would like to thank Liam Keenan for their insights that contributed to the development of this paper. Additionally, we are very grateful to Niles Johnson for pointing out the subtleties surrounding duality and enriched Burnside categories.
We would also like to thank Anna Marie Bohmann, Daniel Fuentes-Keuthan, Elijah Gunther, Andres Mejia, Mona Merling, Lyne Moser, and Ang\'elica Osorno for helpful conversations.
Additionally, we are grateful to Michigan State University for hosting the first-named author and facilitating this collaboration. The first-named author was also partially supported by NSF grant DGE-1845298.  The second-named author was partially supported by NSF grant DMS-2135960.
Some ideas of this work, specifically those related to the definition of symmetric monoidal Mackey functors, also appear in the second-named author's thesis \cite{davidthesis}, which was partially supported by NSF-DMS grant 2104300.

\section{Reframing inverse \texorpdfstring{$K$}{TEXT}-theory in \texorpdfstring{$\infty$}{TEXT}-categories}\label{sec:infty inv KT}

The $K$-theory machines of Segal \cite{segal74} and May \cite{may78} produce connective spectra from symmetric monoidal and permutative categories, respectively. Thomason showed in \cite{Thomason95} that all connective spectra arise as the $K$-theory of symmetric monoidal categories, up to equivalence. Mandell \cite{Mandell/invKT} gave another proof of Thomason's result using permutative categories, which are a stricter version of symmetric monoidal categories.

In this section, we review these ideas and recontextualize them in the language of $\infty$-categories. Since Bohmann--Osorno build their equivariant $K$-theory machine using permutative categories, it will be convenient for us to consider both the permutative and symmetric monoidal story here. 

\begin{definition}
    A \textit{permutative category} $(\cat C,\oplus,0)$ is small category with a strictly associative, symmetric binary functor $\oplus\colon \cat{C}^2\to \cat{C}$, and a choice of object $0$ which is a strict unit for $\oplus$.  A \textit{strictly unital lax monoidal functor} of permutative categories is a functor $f\colon \cat C\to \cat D$ such that $f(0_\mathcal{C}) =0_{\mathscr{D}}$, together with a natural transformation 
    \[
        \delta \colon f(c_1)\oplus f(c_2) \to f(c_1\oplus c_2)
    \]
    such that the usual associativity and  unitality diagrams commute.

    A lax monoidal functor is called \textit{strong monoidal} if it is strictly unital and if $\delta$ is a natural isomorphism.
    A \textit{natural transformation of lax functors} is a natural transformation which commutes with the structure maps.
\end{definition}

We write $\Perm$ for the $1$-category of permutative categories and strong monoidal functors.  We let $\Perm_2$ denote the (strict) $(2,1)$-category of permutative categories, strong monoidal functors, and natural isomorphisms. Similarly, we write $\Sym$ for the $1$-category of symmetric monoidal categories and strong monoidal functors, and $\Sym_2$ for the corresponding $(2,1)$-category.

The $K$-theory functors from permutative and symmetric monoidal categories can be factored as\[
K\colon \Sym \xrightarrow{\ell} \Perm \xrightarrow{\hat K} \Gamma\text{-Cat} \xrightarrow{N_*}\Gamma\text{-sSet},
\] where $\ell$ is a strictification functor (c.f. \cite{Isbell}) and $N_*$ is the nerve functor $\mathrm{Cat}\to \mathrm{sSet}$ applied levelwise. Here, $\Gamma$-sSet is Segal's model of the category of connective spectra \cite{segal74}. In particular, two special $\Gamma$-simplicial sets are said to be \textit{stably equivalent} if they produce equivalent spectra. Using $K$, $\Sym$ and $\Perm$ can also be equipped with a notion of stable equivalence.

\begin{definition}\label{defn:st equiv in perm}
    Two permutative or symmetric monoidal categories are \textit{stably equivalent} if they are stably equivalent in $\Gamma$-sSet after applying $K$. Let $W_K$ denote the collection of stable equivalences in $\Perm$ or $\Sym$.
\end{definition}

\begin{theorem}[Thomason, Mandell]\label{thm:Mandell for infty cats}
    The $K$-theory functor induces equivalences of $\infty$-categories\[\begin{tikzcd}
             N_*(\Sym)[W_K^{-1}] \ar[r, shift left, swap] & N_*(\Perm)[W_K^{-1}] \ar[r, shift left] \ar[l, shift left,] & \Sp_\infty^{\geq 0} \ar[l, shift left] 
    \end{tikzcd}
    \] where $\Sp_\infty^{\geq 0}$ is the $\infty$-category of connective spectra. In particular, this induces an equivalence of homotopy categories.
\end{theorem}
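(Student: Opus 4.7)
The plan is to deduce each of the two equivalences from a general principle about Dwyer--Kan localization: if $F\colon (\cat C, W) \to (\cat D, W')$ is a functor of relative categories admitting a functor $G$ in the opposite direction together with zigzags of natural weak equivalences $GF \Leftrightarrow \id_{\cat C}$ and $FG \Leftrightarrow \id_{\cat D}$ (with all components landing in $W$, $W'$), then $F$ induces an equivalence of $\infty$-categorical localizations. Both required equivalences will then reduce to producing such inverses and zigzags at the $1$-categorical level, which is precisely the content of Thomason's and Mandell's theorems.

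For the left arrow $N_*(\Sym)[W_K^{-1}] \simeq N_*(\Perm)[W_K^{-1}]$, strictification $\ell$ preserves stable equivalences by the factorization $K = N_*\hat{K}\ell$, so it descends to the localizations. An inverse is provided by the forgetful functor $U\colon \Perm \to \Sym$; the unit and counit of Isbell's strictification $2$-adjunction \cite{Isbell} are strong monoidal equivalences on underlying categories, and hence induce equivalences on $K$-theory, so they belong to $W_K$. For the right arrow $N_*(\Perm)[W_K^{-1}] \simeq \Sp_\infty^{\geq 0}$, I would first invoke Segal's equivalence $N_*(\Gamma\text{-sSet})[W_{\rm st}^{-1}] \simeq \Sp_\infty^{\geq 0}$ and then apply Mandell's construction \cite{Mandell/invKT}, which yields an inverse $K$-theory functor $P\colon \Gamma\text{-sSet} \to \Perm$ together with natural zigzags of stable equivalences $\hat{K}N_* P \Leftrightarrow \id$ and $P\hat{K}N_* \Leftrightarrow \id$.

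The main obstacle I anticipate is bridging the $1$-categorical outputs of Thomason--Mandell (natural zigzags of stable equivalences) with the stronger $\infty$-categorical conclusion. This hinges on a careful invocation of the fact that, for a relative category $(\cat C, W)$, natural weak equivalences between functors descend to equivalences in the Dwyer--Kan localization $N_*(\cat C)[W^{-1}]$. Since $W_K$ is defined as the preimage of equivalences under a functor, it tautologically satisfies the two-out-of-three property and is closed under retracts, so standard localization machinery applies without further technical input; the remaining verification is then purely formal, and the induced equivalence on homotopy categories is automatic.
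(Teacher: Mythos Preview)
Your proposal is correct and follows essentially the same approach as the paper. The paper identifies $\Sp_\infty^{\geq 0}$ with $N_*(\Gamma\text{-sSet})[W^{-1}]$ via \cite[Theorem 0.10]{MMSS}, then invokes Mandell's adjunction between $\Perm$ and $\Gamma\text{-sSet}$ with unit and counit valued in stable equivalences, and finally cites \cite[Theorem 3.9]{Mandell/invKT} and \cite[Lemma 1.9.2]{Thomason95} for the $\Sym$/$\Perm$ comparison; your general principle about relative categories with zigzags of natural weak equivalences is exactly the mechanism underlying this, just made explicit.
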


\begin{proof}
By \cite[Theorem 0.10]{MMSS}, the Dwyer--Kan localization $N_*(\Gamma\text{-sSet})[W^{-1}]$ with respect to stable equivalences $W$ is equivalent to the $\infty$-category of connective spectra. This allows us to replace the category of connective spectra in the statement with $N_*(\Gamma\text{-sSet})[W^{-1}]$. The result then follows from the work of Mandell \cite{Mandell/invKT}, who constructs an adjunction between $\Perm$ and $\Gamma\text{-sSet}$ where both functors preserve stable equivalences, and the unit and counit of the adjunction are valued in stable equivalences. By \cite[Theorem 3.9]{Mandell/invKT} (see also \cite[Lemma 1.9.2]{Thomason95}), this also gives the result for symmetric monoidal categories.
\end{proof}

There are several options for morphisms in the categories of symmetric monoidal or permutative categories.  For example, one could consider lax, strong, or even strict monoidal functors. However, lax symmetric monoidal functors of permutative categories can be replaced with strong monoidal functors of symmetric monoidal categories, up to stable equivalence, see \cite[Lemma 1.9.2]{Thomason95} or \cite[Theorem 3.9]{Mandell/invKT}. In any case, the above theorem will remain true with the choice of either lax or strong monoidal functors.

We can reprove the theorem completely internally in the language of $\infty$-categories using symmetric monoidal $\infty$-categories. 
In this context, $K^{-1}$ has a simple description.

Let $\smc$ be the large $\infty$-category of symmetric monoidal $\infty$-categories (with strong monoidal functors), as in \cite[2.1.4.13]{HA}.
As shown in \cite[\S 8]{GMN}, there is an extension of  Segal's definition of algebraic $K$-theory to symmetric monoidal $\infty$-categories:
\[
\begin{tikzcd}
    K\colon\smc \ar{r}{(-)^\simeq} &  \operatorname{Alg}_{\mathbb{E}_\infty}({\cat  S}) \ar{r}{\Omega B} & \operatorname{Grp}_{\mathbb{E}_\infty}({\cat S}) \simeq \Sp^{\geq 0}_\infty,
\end{tikzcd}
\] where $(-)^{\simeq}$ denotes taking the maximal $\infty$-groupoid and $\Omega B$ is group completion.
We can thus define the class of \textit{stable equivalences} $W_\infty$ in $\smc$ to be strong symmetric monoidal functors $\mathscr{C}\rightarrow \mathscr{D}$ that induce equivalences $K(\mathscr{C})\rightarrow K(\mathscr{D})$ in spectra.
The definition of $K$-theory fits into the zig-zag of adjunctions in $\infty$-categories:
% https://q.uiver.app/#q=WzAsNCxbMCwwLCJcXHNtYyJdLFswLDIsIlxcb3BlcmF0b3JuYW1le0FsZ31fe1xcbWF0aGJie0V9X1xcaW5mdHl9KFxcbWF0aHNjcntTfSkiXSxbMiwyLCJcXG9wZXJhdG9ybmFtZXtHcnB9X3tcXG1hdGhiYntFfV9cXGluZnR5fShcXG1hdGhzY3J7U30pIl0sWzMsMiwiXFxTcF57XFxnZXEgMH1fXFxpbmZ0eSJdLFszLDIsIlxcT21lZ2FeXFxpbmZ0eSAiXSxbMCwxLCIoLSleXFxzaW1lcSIsMCx7Im9mZnNldCI6LTEsImN1cnZlIjotMX1dLFsxLDAsIiIsMCx7Im9mZnNldCI6LTEsImN1cnZlIjotMSwic3R5bGUiOnsidGFpbCI6eyJuYW1lIjoiaG9vayIsInNpZGUiOiJ0b3AifX19XSxbMSwwLCIiLDEseyJzdHlsZSI6eyJib2R5Ijp7Im5hbWUiOiJub25lIn0sImhlYWQiOnsibmFtZSI6Im5vbmUifX19XSxbMSwyLCJcXE9tZWdhIEIiLDAseyJjdXJ2ZSI6LTJ9XSxbMiwxLCIiLDEseyJjdXJ2ZSI6LTJ9XSxbMSwyLCJcXHBlcnAiLDEseyJzdHlsZSI6eyJib2R5Ijp7Im5hbWUiOiJub25lIn0sImhlYWQiOnsibmFtZSI6Im5vbmUifX19XSxbMSwwLCIiLDEseyJzdHlsZSI6eyJib2R5Ijp7Im5hbWUiOiJub25lIn0sImhlYWQiOnsibmFtZSI6Im5vbmUifX19XSxbMSwwLCJcXGRhc2h2IiwxLHsic3R5bGUiOnsiYm9keSI6eyJuYW1lIjoibm9uZSJ9LCJoZWFkIjp7Im5hbWUiOiJub25lIn19fV1d
\begin{equation}\label{equation: K-theory zig-zag}
\begin{tikzcd}
	\smc \\
	\\
	{\operatorname{Alg}_{\mathbb{E}_\infty}(\mathscr{S})} && {\operatorname{Grp}_{\mathbb{E}_\infty}(\mathscr{S})} & {\Sp^{\geq 0}_\infty.} \ar{l}{\Omega^\infty}[swap]{\simeq}
	\arrow["{(-)^\simeq}", shift left, curve={height=-6pt}, from=1-1, to=3-1]
	\arrow[shift left, curve={height=-6pt}, hook, from=3-1, to=1-1]
	\arrow[draw=none, from=3-1, to=1-1]
	\arrow["{\Omega B}", curve={height=-12pt}, from=3-1, to=3-3]
	\arrow[curve={height=-12pt}, hook', from=3-3, to=3-1]
	\arrow["\perp"{description}, draw=none, from=3-1, to=3-3]
	\arrow[draw=none, from=3-1, to=1-1]
	\arrow["\dashv"{description}, draw=none, from=3-1, to=1-1]
\end{tikzcd}
\end{equation}
The vertical adjoint pair stems from the adjunction
\[
\begin{tikzcd}
   \hspace{0.6cm}{\mathscr{S}} \ar[bend left, hook]{rr}\arrow[rr, phantom, "\perp" description]  & & {\mathscr{C}\textup{at}_\infty} \ar[bend left]{ll}{(-)^\simeq}
\end{tikzcd}
\]
for which the left adjoint is given by the fundamental $\infty$-groupoid, or alternatively as the inclusion of $\infty$-groupoids into $\infty$-categories. This inclusion is strong monoidal with respect to the Cartesian monoidal structure, and therefore the adjunction lifts to $\mathbb{E}_\infty$-algebras and moreover $\operatorname{Alg}_{\mathbb{E}_\infty}(\mathscr{C}\textup{at}_\infty)\simeq \smc$.
Inverse $K$-theory is the composite of the forgetful functors above:
\[
\begin{tikzcd}
K^{-1}\colon \Sp^{\geq 0}_\infty \simeq \operatorname{Grp}_{\mathbb{E}_\infty}(\mathscr{S}) \ar[hook]{r} & \operatorname{Alg}_{\mathbb{E}_\infty}(\mathscr{S}) \ar[hook]{r} & \smc.
\end{tikzcd}
\]

\begin{corollary}\label{cor:equiv smc and sp}
The functor $K\colon \smc\rightarrow \Sp^{\geq 0}_\infty$ induces an equivalence of $\infty$-categories
\[
\smc[W^{-1}_\infty]\simeq \Sp^{\geq 0}_\infty.
\]
In particular,
\[\smc [W^{-1}_\infty] \simeq  N_*(\Perm)[W_K^{-1}]\simeq N_*(\Sym)[W_K^{-1}],
\]
so for any symmetric monoidal $\infty$-category $\mathscr{C}$, there is a permutative category $C$ with $K(\mathscr{C})\simeq K(C)$.
\end{corollary}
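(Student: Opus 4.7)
The plan is to deduce this equivalence from the factorization of $K$ displayed in the zig-zag \cref{equation: K-theory zig-zag} by recognizing $K$ as a composite of reflective Bousfield localizations.

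First, I would verify that each of the two functors $(-)^\simeq\colon \smc\to \operatorname{Alg}_{\mathbb{E}_\infty}(\mathscr{S})$ and $\Omega B\colon \operatorname{Alg}_{\mathbb{E}_\infty}(\mathscr{S})\to \operatorname{Grp}_{\mathbb{E}_\infty}(\mathscr{S})$ is a left adjoint whose right adjoint is fully faithful. For the first, the inclusion $\mathscr{S}\hookrightarrow \mathscr{C}\textup{at}_\infty$ is fully faithful and strong symmetric monoidal, so it induces a fully faithful inclusion $\operatorname{Alg}_{\mathbb{E}_\infty}(\mathscr{S})\hookrightarrow \operatorname{Alg}_{\mathbb{E}_\infty}(\mathscr{C}\textup{at}_\infty)\simeq \smc$, whose left adjoint is $(-)^\simeq$. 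For the second, grouplike $\mathbb{E}_\infty$-algebras form a full subcategory of all $\mathbb{E}_\infty$-algebras in $\mathscr{S}$, with inclusion reflected by group completion $\Omega B$.

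Next, I would invoke the general fact that a left adjoint $L\colon \mathscr{C}\to \mathscr{D}$ with fully faithful right adjoint exhibits $\mathscr{D}$ as the Dwyer--Kan localization of $\mathscr{C}$ at the class of morphisms that $L$ inverts. Since reflective localizations compose, the composite $K = \Omega B \circ (-)^\simeq$ exhibits $\operatorname{Grp}_{\mathbb{E}_\infty}(\mathscr{S})\simeq \Sp^{\geq 0}_\infty$ as the localization $\smc[W^{-1}_\infty]$, where $W_\infty$ is by definition the class of $K$-equivalences. The remaining equivalences in the statement then follow by combining with \cref{thm:Mandell for infty cats}; given any symmetric monoidal $\infty$-category $\mathscr{C}$, its image in $\Sp^{\geq 0}_\infty \simeq N_*(\Perm)[W_K^{-1}]$ is represented by some permutative category $C$, giving $K(\mathscr{C})\simeq K(C)$.

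The main subtlety will be confirming that the inclusions in the zig-zag are fully faithful in the $\infty$-categorical sense — that is, that passage to $\mathbb{E}_\infty$-algebras preserves the fullness of the inclusion $\mathscr{S}\hookrightarrow \mathscr{C}\textup{at}_\infty$ and that grouplike objects form a full reflective subcategory of $\operatorname{Alg}_{\mathbb{E}_\infty}(\mathscr{S})$. Both points are handled by the general theory of $\mathbb{E}_\infty$-algebras in cartesian monoidal $\infty$-categories developed in \cite{HA}; everything else is formal.
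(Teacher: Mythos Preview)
Your strategy---factor $K$ through the zig-zag \cref{equation: K-theory zig-zag} and argue that each leg is a Dwyer--Kan localization---is sound, and once corrected it gives a clean proof. But there is a genuine error in the first step: $(-)^\simeq$ is \emph{not} the left adjoint to the inclusion $\operatorname{Alg}_{\mathbb{E}_\infty}(\mathscr{S})\hookrightarrow \smc$; it is the \emph{right} adjoint. Concretely, for a space $X$ and an $\infty$-category $\mathscr{C}$ one has $\operatorname{Map}_{\mathscr{C}\mathrm{at}_\infty}(X,\mathscr{C})\simeq \operatorname{Map}_{\mathscr{S}}(X,\mathscr{C}^\simeq)$, so the inclusion is left adjoint and the core is its right adjoint (this is exactly what the $\dashv$ in \cref{equation: K-theory zig-zag} and the sentence following it record). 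Consequently $K$ is not a composite of two reflective localizations, and your appeal to ``reflective localizations compose'' does not apply as stated.

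The fix is mild but worth making precise. A right adjoint $R\colon \mathscr{C}\to \mathscr{D}$ whose left adjoint is fully faithful is also a Dwyer--Kan localization at the $R$-equivalences: from $i\dashv R$ one gets $R^*\dashv i^*$ on functor categories, and full faithfulness of $i$ forces the unit of $R^*\dashv i^*$ to be an equivalence, so $R^*$ is fully faithful with essential image the functors inverting the counit maps $iR(c)\to c$, hence all $R$-equivalences. Thus $(-)^\simeq$ is a DK localization (coreflective rather than reflective), $\Omega B$ is a DK localization (reflective), and DK localizations compose, yielding $\smc[W_\infty^{-1}]\simeq \Sp_\infty^{\geq 0}$ as you wanted. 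The remainder of your argument then goes through.

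By comparison, the paper's proof avoids the adjoint-functor bookkeeping entirely: it simply observes that the counit $\mathscr{C}^\simeq\hookrightarrow \mathscr{C}$ is a $K$-equivalence, reduces to symmetric monoidal $\infty$-groupoids, and then uses idempotence of group completion to check that $\mathscr{C}\to K^{-1}K(\mathscr{C})$ and $KK^{-1}(E)\to E$ are stable equivalences. Your (corrected) approach packages the same facts more structurally; the paper's is more hands-on but requires no general statement about coreflective DK localizations.
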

\begin{proof}
The proof is an extension of Thomason's arguments \cite{Thomason95} with the added flexibility of the language of $\infty$-categories. Essentially, localizing guarantees that the units and counits of the adjunctions in \cref{equation: K-theory zig-zag} are equivalences. 

In more detail, note that every symmetric monoidal $\infty$-category $\cat C$ is stably equivalent to its maximal $\infty$-groupoid, essentially by definition. It thus suffices to consider $\infty$-groupoids. 
Suppose now that $\mathscr{C}$ is a symmetric monoidal $\infty$-groupoid, so that $\mathscr{C}^\simeq\simeq \mathscr{C}$. Then $K(\mathscr{C})\simeq \Omega B \mathscr{C}$, and since $\Omega B(\Omega B \mathscr{C})\simeq \Omega B \mathscr{C}$, we see that $\mathscr{C}\rightarrow K^{-1}(K(\mathscr{C}))$ is a stable equivalence for any symmetric monoidal $\infty$-groupoid $\mathscr{C}$, and hence for any symmetric monoidal $\infty$-category by our previous discussion. Analogously, we obtain an equivalence $K(K^{-1}(E))\rightarrow E$ for any connective spectrum $E$. This proves the first statement in the theorem, and the second statement follows from \cref{thm:Mandell for infty cats}.
\end{proof}

\section{Variations on a theme: Mackey functors}\label{sec:Mack}

In this section we define symmetric monoidal Mackey functors and the relevant variations. 
Mackey functors were first defined by Dress to axiomatize structures in representation theory \cite{Dress:Mackey}, and play the role of Abelian groups in equivariant homotopy theory. 
They also provide a useful framework for equivariant topology because the genuine equivariant cohomology of a $G$-space is naturally a Mackey functor.  

Each of the variations of Mackey functors that we consider requires a corresponding variation of the \textit{Burnside category}, described below. We detail the variations on this category that we need in the following subsections.

\begin{definition}
    The \textit{Burnside category} $\mathcal{A}^G$ is the $1$-category whose objects are finite $G$-sets and whose morphisms are isomorphism classes of \textit{spans} of $G$-sets $S\leftarrow U\rightarrow T$. Two spans are isomorphic if there is a commutative diagram of $G$-sets\[
    \begin{tikzcd}[column sep=1cm,row sep=0.1cm]
     & U \ar[ld] \ar[rd]\ar[dd, "\cong"] &\\
        S & & T\\
        & U' \ar[ul]\ar[ur] &
    \end{tikzcd}.
    \] Composition of spans is given by pullback, and $\mathcal{A}^G$ comes equipped with a preadditive structure via disjoint union.
\end{definition}

\begin{definition}
    A \textit{Mackey functor} is a product-preserving functor $M\colon \mathcal{A}^G\to {\rm Ab}$, where ${\rm Ab}$ is the category of Abelian groups.
\end{definition}

Essentially, a Mackey functor $M$ for a finite group $G$ consists of a collection of Abelian groups $\{M(H)\}$ indexed on the subgroups $H\leq G$, connected by a system of additive operations called restrictions and transfers. More details on equivariant algebra can be found in \cite{ThevenazWebb:1995} or \cite{lewis:1980}.

More generally, Mackey functors can be valued in preadditive categories other than Abelian groups. We let $\Mack(\cat C) = \Fun^{\rm add}(\mathcal{A}^G, \cat C)$ denote the $1$-category of Mackey functors valued in $\cat C$. The examples we will discuss in this paper are the following:\begin{itemize}
    \item \cref{sec:cat Mack}: \textit{$\PC$-Mackey functors}, called categorical Mackey functors by Bohmann--Osorno \cite{BohmannOsorno:2015}, are valued in permutative categories and can be thought of as a collection of permutative categories $\{\cat C(H)\}_{H\leq G}$ whose restrictions and transfers are strictly unital lax functors. 
    \item \cref{sec:perm Mack}: \textit{Permutative Mackey functors} are like $\PC$-Mackey functors, except the restrictions and transfers are strong monoidal.  Additionally, there are more morphisms between permutative Mackey functors and the compatibility conditions on restrictions and transfers are only required to hold up to natural isomorphism (as opposed to strict equality).
    \item \cref{sec:perm Mack}: \textit{Symmetric monoidal Mackey functors} are the same as permutative Mackey functors, but with symmetric monoidal categories instead of permutative categories.
    \item \cref{sec:infty Mack}: By work of Barwick and coauthors Glasman--Shah \cite{BarwickOne, Barwick2}, there is an $\infty$-categorical version of the Burnside category and consequently \textit{$\infty$-categorical Mackey functors} valued in a preadditive $\infty$-category.
    \item \cref{sec:Sp Mack}: \textit{Spectral Mackey functors} are collections of spectra $\{\cat E(H)\}_{H\leq G}$ whose restriction and transfer maps are maps of spectra. Guillou--May show in \cite{GuillouMay:2011} that spectral Mackey functors provide a model for genuine $G$-spectra; a genuine $G$-spectrum $E$ determines a spectral Mackey functor which at level $H\leq G$ is the categorical $H$-fixed points of $E$.
\end{itemize}

\begin{remark}
    Mackey functors are sometimes defined as functors out of the opposite category of $\mathcal{A}^G$. If we do not consider the opposite, the resulting definition is the same since the Burnside category is self-dual, as spans can always be ``turned around.'' This self-duality holds for many variations of the Burnside category (e.g.\ the effective Burnside category of \cite{BarwickOne}), however, in the enriched setting, duality (taking opposite categories) is not always compatible with enrichment. In \cref{app:GM thm}, we show that (for the purposes of spectral Mackey functors) there is no meaningful difference between the contravariant definition in \cite{GuillouMay:2011} and the covariant definition we use here.
    
    %That being said, this difference has the potential to lead to some unfortunate collisions of notation, since some authors (e.g.\ \cite{GuillouMay:2011}) use the ``op'' notation while others (e.g.\ \cite{BarwickOne}) do not.  In this paper we adopt the ``no op'' notation, since it is shorter and there is no meaningful difference.
\end{remark}

\subsection{\texorpdfstring{$\PC$}{TEXT}-Mackey functors}\label{sec:cat Mack}

In this subsection, we recall Bohmann--Osorno's definition of categorical Mackey functors, which we call $\PC$-Mackey functors. To describe $\PC$-Mackey functors, we need some preliminary definitions; more detailed descriptions may be found in \cite{BohmannOsorno:2015}. A $\PC$-category is a category enriched in permutative categories with some extra structure coming from \textit{multilinear} functors. For our purposes, the following definition of \textit{bilinear} functors suffices.

\begin{definition}
    Let $\cat{C}_i$ be permutative categories for $i=1, 2, 3$.  A \emph{bilinear functor}
$
    f\colon \cat{C}_1\times \cat{C}_2\to \cat{C}_3 
$
is a functor from the product category $\cat{C}_1\times \cat{C}_2$ to $\cat{C}_3$ together with natural transformations
\begin{align*}
    \delta_{2}\colon f(x,y)\oplus f(x,z) & \to f(x,y\oplus z)\\
    \delta_{1}\colon f(y,x)\oplus f(z,x) & \to f(y\oplus z, x)
\end{align*}
satisfying the conditions of \cite[Definition 3.4]{Elmendorf-Mandell:06}.  These maps should be thought of as bilinear maps in the sense of ordinary linear algebra.
\end{definition}

The category of permutative categories can be given the structure of a \emph{multicategory}, a version of a category equipped with a notion of multilinear maps; multicategories are also sometimes called \textit{colored operads}. Let $\PC$ denote the multicategory of permutative categories (see \cite[\S 3]{BohmannOsorno:2015} for a full definition). This extra structure is important for the $K$-theory construction of Bohmann--Osorno since it allows for effective change of enrichment from functors enriched in permutative categories to functors enriched in spectra. 

\begin{definition}[{cf.\ \cite[Definition 4.1]{BohmannOsorno:2015} and the discussion that follows.}]\label{PC categories definition}
A $\PC$-category $\cat{C}$ consists of the following data:
\begin{enumerate}
    \item[(i)] a collection of objects $\mathrm{ob}(\cat{C})$,
    \item[(ii)] a permutative category $\cat{C}(x,y)$ for all $x,y \in \mathrm{ob}(\cat{C})$,
    \item[(iii)] an identity object $\id_x\in \cat{C}(x,x)$,
    \item[(iv)] a bilinear functor
    \[
        \circ_{x,y,z}\colon \cat{C}(y,z)\times \cat{C}(x,y)\to \cat{C}(x,z)
    \]
    such that the identity objects are strict identities.
\end{enumerate}
When the objects are clear, we will write $\circ$ instead of $\circ_{x,y,z}$.
\end{definition}

\begin{example}[{c.f.\ \cite[Section 5]{BohmannOsorno:2015}}]
    The $2$-category of small permutative categories, strictly unital lax monoidal functors, and monoidal natural transformations is a $\PC$-category, denoted $\Perm_{\PC}$.  The permutative structure on strictly unital lax monoidal functors is given by pointwise addition. 
\end{example}

\begin{definition}\label{PC functor definition}
    Let $\cat{C}$ and $\cat{D}$ be $\PC$-categories.  A $\PC$-functor $F\colon \cat{C}\to \cat{D}$ consists of an object function $\mathrm{ob}(\cat{C})\to \mathrm{ob}(\cat{D})$ as well as strictly unital lax functors 
$
        F_{x,y}\colon \cat{C}(x,y)\to \cat{D}(Fx,Fy)
$
    such that the following diagram commutes for all $x,y,z\in \cat{C}$
    \[
        \begin{tikzcd}
            \cat{C}(y,z)\times\cat{C}(x,y) \ar[r,"\circ"] \ar[d, swap, "F_{y,z}\times F_{x,y}"] & \cat{C}(x,z) \ar[d,"F_{x,y}"] \\
            \cat{D}(Fy,Fz)\times \cat{D}(Fx,Fy) \ar[r,swap, "\circ"] & \cat{D}(Fx,Fz).
        \end{tikzcd}
    \]
\end{definition}

\begin{definition}\label{PC NT definition}
Let $\cat{C}$ and $\cat{D}$ be $\PC$-categories and let $F_1, F_2\colon \cat{C}\to \cat{D}$ be two $\PC$-functors. A \emph{$\PC$-natural transformation} $\alpha\colon F_1\Rightarrow F_2$ consists of a collection of morphisms 
    \[
        \alpha_{x}\colon F_1x\to F_2x
    \]
in $\cat{D}$ so that for any $f\colon x\to z$ in $\cat{C}$ the following diagram commutes
\[
    \begin{tikzcd}
    \cat{C}(x,y) \ar[d,"F_1"] \ar[r,"F_2"] & \cat{D}(F_2x,F_2y)  \ar[d,"\alpha_x^*"]  \\
    \cat{D}(F_1x,F_1y) \ar[r,"(\alpha_{y})_*"]& \cat{D}(F_1x,F_2y).
    \end{tikzcd}
\]
\end{definition}

Given two $\PC$-categories $\cat{C}$ and $\cat{D}$, there is a category of $\PC$-functors and natural transformations we denote by $\Fun_{\PC}(\cat{C},\cat{D})$.  In \cite[Theorem 7.5]{BohmannOsorno:2015}, the authors use a category of $\PC$-functors as input for a $K$-theory machine. The domain of these enriched functors is a $\PC$-enriched version of the Burnside category.

\begin{definition}
    The $\PC$-Burnside category $\mathcal{A}^G_{\PC}$ has objects finite $G$-sets, and the permutative category of morphisms $\mathcal{A}_{\PC}^G(S,T)$ consists of spans $S\leftarrow U \to T$ of finite $G$-sets and isomorphisms of spans. The permutative structure is given by disjoint union.
\end{definition}

Composition in $\mathcal{A}^G_{\PC}$ is given by pullback, which can be made strictly unital and associative by choosing specific models, as discussed in the next remark (see also \cite[Definition 7.2]{BohmannOsorno:2015}).

\begin{remark}
We will implicitly take our $G$-sets to be totally ordered sets on which $G$ acts through permutations. An equivariant map between $G$-sets may or may not respect this total ordering.  Forgetting the total ordering, the category of finite $G$-sets is equivalent to the usual category of finite $G$-sets. 

By introducing a total ordering on $G$-sets, we can give a strictly associative and unital model of disjoint union for finite $G$-sets. In particular, the category of finite $G$-sets and disjoint unions, and related categories, are permutative instead of merely symmetric monoidal.
\end{remark}

\begin{definition}
    A \textit{$\PC$-Mackey functor} is a $\PC$-enriched functor $\mathcal{A}_{\PC}^G\to {\Perm}_{\PC}$. We denote the category of $\PC$-Mackey functors by \[\Mack_{\PC}(\Perm_{\PC}):=\Fun_{\PC}(\mathcal{A}_{\PC}^G, {\Perm}_{\PC}).\]
\end{definition}

Bohmann--Osorno call these objects categorical Mackey functors. We have opted for a different name to illustrate the difference between this definition and the one in the next subsection.

\subsection{Symmetric monoidal Mackey functors}\label{sec:perm Mack}
In this subsection, we define another notion of categorical Mackey functor using pseudo-functors, following ideas of \cite{HillHopkins}.  The advantage of this new construction is that pseudo-functors tend to be more commonly occurring ``in nature'' than strict functors and thus it is easier to find examples; indeed, every example we know of can be constructed this way. Moreover, these objects are easier to compare with the $\infty$-categorical version of Mackey functors discussed in \cref{sec:infty Mack}. 

\begin{definition}
Suppose that $\cat C$ and $\cat D$ are $(2,1)$-categories.  A \textit{pseudo-functor} $F\colon \cat{C}\to \cat{D}$ consists of an object function $\mathrm{ob}(\cat{C})\to \mathrm{ob}(\cat{D})$, together with functors $\cat{C}(x,y)\to \cat{D}(Fx,Fy)$ for all objects $x,y\in \cat{C}$. These functors must be compatible with units and composition up to some invertible $2$-cells in $\mathscr{D}$ satisfying some coherence relations; see \cite[Definition 4.1.2]{JohnsonYau2Cat}.  A \emph{strict functor} is a pseudo-funtor for which all the defining $2$-cells are identities.
\end{definition}

In this paper we will only use pseudo-functors which are \emph{strictly unital}, meaning they preserve identity morphisms on the nose.

\begin{remark} \label{rem:lax versus pseudo}
In general, one often considers \emph{lax} functors which are like pseudo-functors except the structure $2$-cells for units and composition are not assumed to be invertible. All of the $2$-categories we consider in this paper will be $(2,1)$-categories, where every lax functor is automatically a pseudo-functor.
%Since all of our $2$-categories will be $(2,1)$-categories there is no distinction between lax functors and pseudo-functors and the terms can be used interchangeably. 
\end{remark}

Between two pseudo-functors there is a notion of pseudo-natural transformation, which is just like an ordinary natural transformation except the defining diagrams again only hold up to some specified invertible $2$-cells.  The collection of pseudo-functors from $\cat C$ to $\cat D$ and pseudo-natural transformations between them forms a $1$-category we denote by $\Fun_{\ps}(\cat C,\cat D)$. 
Similarly, we can consider the $1$-category of strict $2$-functors and strict $2$-natural transformations $\Fun_2(\mathscr{C},\mathscr{D})$.

\begin{definition}
    The \textit{$(2,1)$-Burnside category} $\mathcal{A}^G_2$ has objects finite $G$-sets. Given finite $G$-sets $S$ and $T$, ${\rm Hom}(S,T)$ is the groupoid of spans\[
     S\longleftarrow U \longrightarrow T
\] of finite $G$-sets and isomorphisms of spans. Horizontal composition in this $2$-category is given by pullback of spans. Disjoint union of finite $G$-sets induces a preadditive structure on $\mathcal{A}^G_2$.
\end{definition}

\begin{definition}
    We write $\Perm_2$ for the $(2,1)$-category of permutative categories, strictly unital strong monoidal functors, and monoidal natural isomorphisms.
\end{definition}

Both of the $2$-categories $\Perm_2$ and $\mathcal{A}^G_2$ admit $2$-categorical products given by the cartesian product of permutative categories and disjoint union, respectively. We will call a pseudo-functor between these two $2$-categories \emph{additive} if it preserves these products, up to equivalence.

\begin{definition}
    A \textit{permutative Mackey functor} is a strictly unital additive pseudo-functor $\mathcal{A}^G_2\to\Perm_2$. A morphism of permutative Mackey functors is a pseudo-natural transformation. We let $
     \Mack_{\ps}(\Perm_2):=\Fun^{\rm add}_{\ps}(\mathcal{A}^G_2,\Perm_2)$ denote the category of permutative Mackey functors.
\end{definition}

Our goal is to show that every permutative Mackey functor determines a $\PC$-Mackey functor.  As a first step, we show how to produce a $\PC$-Mackey functor out of a strict $2$-functor. Recall that $\Fun_2(\mathcal{A}^G_2, \Perm_2)$ denotes the category of strict $2$-functors and strict $2$-natural transformations. 

\begin{lemma}\label{lem: Eckmann--Hilton}
        If $F\colon \mathcal{A}^G_2\to \Perm_2$ is strict and additive, then the functors
        \[
            F\colon \mathcal{A}^G_2(X,Y)\to \Perm_2(F(X),F(Y))
        \]
        are strong monoidal.
    \end{lemma}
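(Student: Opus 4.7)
My plan is to use the categorical Eckmann--Hilton principle, as the lemma's name suggests. The semiadditive structure on $\mathcal{A}^G_2$ allows us to rewrite disjoint union of spans as a composition involving the biproduct structure: for $\alpha, \beta \colon X \to Y$, a direct pullback computation shows
\[
\alpha \sqcup \beta \;=\; \nabla_Y \circ (\alpha \oplus \beta) \circ \Delta_X,
\]
where $\Delta_X \colon X \to X \sqcup X$ is the diagonal span, $\nabla_Y \colon Y \sqcup Y \to Y$ is the fold span, and $\alpha \oplus \beta \colon X \sqcup X \to Y \sqcup Y$ is the biproduct $1$-morphism induced by the inclusions and projections.

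Applying the strict $2$-functor $F$ and using additivity, the equivalence $F(X \sqcup X) \simeq F(X) \times F(X)$ identifies $F(\Delta_X)$ with the diagonal $\Delta_{F(X)}$ and $F(\alpha \oplus \beta)$ with $F(\alpha) \times F(\beta)$. The image $F(\nabla_Y)$ becomes a $1$-morphism $\mu \colon F(Y) \times F(Y) \to F(Y)$ in $\Perm_2$, giving an isomorphism
\[
F(\alpha \sqcup \beta) \;\cong\; \mu \circ \bigl( F(\alpha) \times F(\beta) \bigr) \circ \Delta_{F(X)}.
\]
Since pointwise addition on $\Perm_2(F(X), F(Y))$ is computed by the analogous formula $f + g = \oplus_{F(Y)} \circ (f \times g) \circ \Delta_{F(X)}$, the lemma reduces to identifying $\mu$ with the permutative structure map $\oplus_{F(Y)}$ of the target.

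This identification is the Eckmann--Hilton step proper. The permutative category $F(Y)$ now carries two commutative monoid structures in $\Perm_2$ with a common unit: $\mu$, inherited via $F$ from the biproduct structure on $Y$, and $\oplus_{F(Y)}$, from $F(Y)$'s intrinsic permutative structure. They satisfy the interchange law because $\mu$ is a $1$-morphism in $\Perm_2$, hence a strong monoidal functor with respect to the pointwise structure on $F(Y) \times F(Y)$. The categorical Eckmann--Hilton theorem then yields a canonical monoidal natural isomorphism $\mu \cong \oplus_{F(Y)}$, upgrading the previous display to the desired isomorphism $F(\alpha \sqcup \beta) \cong F(\alpha) + F(\beta)$. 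The unit axiom is immediate because $F$ preserves the zero object: the empty span is sent to a functor factoring through the terminal permutative category, which is the unit of pointwise addition. The coherences for a strong monoidal functor then follow from those of the Eckmann--Hilton identification together with the strict functoriality of $F$. The main technical burden is simply bookkeeping the invertible $2$-cells implicit in the additivity equivalences and checking these coherences, but no individual step should be difficult.
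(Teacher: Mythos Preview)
Your proposal is correct and follows essentially the same approach as the paper: decompose the sum of spans via the biproduct structure, apply $F$ and additivity to reduce to identifying $F(\nabla_Y)$ with $\oplus_{F(Y)}$, and then invoke Eckmann--Hilton using that $F(\nabla_Y)$ is a strong monoidal functor. The paper is terser about the unit axiom and the coherence bookkeeping, but the argument is the same.
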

    \begin{proof}
        Let $\omega_i\colon X\to Y$ be $1$-cells in the Burnside category, for $i=1,2$.  We need to show there are natural isomorphisms
        \[
            F(\omega_1)\oplus F(\omega_2)\simeq F(\omega_1+\omega_2)
        \]
        where $F(\omega_1)\oplus F(\omega_2)\colon F(X)\to F(Y)$ is defined as
        \[
            (F(\omega_1)\oplus F(\omega_2))(x) = F(\omega_1)(x)+F(\omega_2)(x)
        \]
        where $+$ is the monoidal product in $F(Y)$.  The map $\omega_1+\omega_2$ is the composite 
        \[
            X \xrightarrow{R_{\nabla_X}} X\amalg X \xrightarrow{\omega_1\amalg \omega_2} Y\amalg Y \xrightarrow{T_{\nabla_Y}} Y
        \]
        where $\nabla$ is a fold map for either $X$ or $Y$, $R$ is a restriction, and $T$ is a transfer.  Since $F$ preserves products, $F(R(\nabla_X))\colon F(X) \to F(X\amalg X)$ is naturally isomorphic to the diagonal functor, as composition with either projection gives the identity. Additionally, $F(T_{\nabla_Y})\colon F(Y\amalg Y)\to F(Y)$ is naturally isomorphic to $+$; to see this, observe the map $F(T_{\nabla_Y})$ endows $F(Y)$ with a monoidal product which, a priori, is different from $+$. However, as $F(T_{\nabla})$ is strong monoidal (with respect to $+$) this new monoidal product distributes over the original one and so by an Eckmann--Hilton style argument these two products are the same up to natural isomorphism. Consequently, the diagram
        \[
        \begin{tikzcd}[column sep = huge]
        F(X) \ar[r,"F(R_{\nabla_X})"] \ar["\Delta",dr,swap]& F(X\amalg X)\ar[d,"\simeq"]\ar[r,"F(\omega_1\amalg \omega_2)"] & F(Y\amalg Y) \ar[d,"\simeq"] \ar[r,"F(T_{\nabla_Y})"] & F(Y) \\
        & F(X)\times F(X) \ar[r,"F(\omega_1)\times F(\omega_2)"]& F(Y)\times F(Y) \ar[ur,swap,"+"]
        \end{tikzcd}
        \]
        commutes up to natural isomorphism, as the middle square commutes by additivity of $F$. Since the top row of the diagram is $F(\omega_1+\omega_2)$ and the bottom composite is $F(\omega_1)\oplus F(\omega_2)$, the proof is complete.
    \end{proof}

\begin{proposition}\label{prop: strict to PC}
    There is a fully faithful functor
    \[
        \Psi\colon \Fun^{\rm add}_{2}(\mathcal{A}^G_2,\Perm_2)\hookrightarrow\Mack_{\PC}(\Perm_{\PC}).
    \]
\end{proposition}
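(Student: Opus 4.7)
The plan is to construct $\Psi$ explicitly and then verify the fully faithful property. On objects, given a strict additive 2-functor $F\colon \mathcal{A}^G_2 \to \Perm_2$, I set $\Psi(F)(X) = F(X)$ and use $F_{X,Y}\colon \mathcal{A}^G_{\PC}(X,Y) \to \Perm_{\PC}(F(X),F(Y))$ on hom-permutative-categories, noting that $\mathcal{A}^G_{\PC}(X,Y)$ is the underlying permutative category of $\mathcal{A}^G_2(X,Y)$. By \cref{lem: Eckmann--Hilton}, $F_{X,Y}$ is strong monoidal, hence in particular a strictly unital lax monoidal functor, as required by \cref{PC functor definition}. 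The composition-compatibility square and strict unitality in \cref{PC functor definition} hold on the nose because $F$ is a strict 2-functor. On morphisms, a strict 2-natural transformation $\alpha$ has strong monoidal components $\alpha_X\colon F_1(X) \to F_2(X)$, which are objects of $\Perm_{\PC}(F_1(X),F_2(X))$, and its strict 2-naturality on both 1- and 2-cells of $\mathcal{A}^G_2$ gives the $\PC$-naturality square of \cref{PC NT definition}. Functoriality of $\Psi$ is then immediate.

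Faithfulness is clear since $\Psi$ is the identity on components. For fullness, given a $\PC$-natural transformation $\beta\colon \Psi(F_1) \Rightarrow \Psi(F_2)$, I must show that each component $\beta_X$ is strong monoidal (so defines a morphism in $\Perm_2$) and that the $\PC$-naturality encodes strict 2-naturality. The key input is additivity of $F_1$ and $F_2$: the transfer along the fold $\nabla\colon X \sqcup X \to X$ is sent by $F_i$ to a functor naturally isomorphic to $\oplus\colon F_i(X) \times F_i(X) \to F_i(X)$ via the additivity equivalence $F_i(X \sqcup X) \simeq F_i(X) \times F_i(X)$, as in the proof of \cref{lem: Eckmann--Hilton}. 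Naturality of $\beta$ with respect to the restriction spans along the inclusions $X \hookrightarrow X \sqcup X$ first identifies $\beta_{X \sqcup X}$ with $\beta_X \times \beta_X$ under additivity, and naturality with respect to the fold transfer then forces the equation $\beta_X(x) \oplus \beta_X(y) = \beta_X(x \oplus y)$. Consequently the lax structure map of $\beta_X$ is an identity, so $\beta_X$ is strong monoidal, and the $\PC$-naturality square specializes to strict 2-naturality on both 1-cells and 2-cells.

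The main obstacle is the fullness step, in particular unpacking the interaction between the non-strict additivity isomorphisms $F_i(X \sqcup Y) \simeq F_i(X) \times F_i(Y)$, the lax structure of $\beta_X$, and the strong monoidal structure of $F_i(T_\nabla)$. The delicate point is verifying that these coherences combine to yield an actual identity for $\delta_{\beta_X}$ rather than only an isomorphism, so that $\beta_X$ really defines a morphism in $\Perm_2$ and so comes from a genuine strict 2-natural transformation.
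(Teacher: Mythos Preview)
Your construction of $\Psi$ on objects and the verification of faithfulness match the paper's approach exactly, invoking \cref{lem: Eckmann--Hilton} to upgrade the hom-functors to strong (hence in particular strictly unital lax) monoidal functors. The paper's treatment of morphisms is a single clause---``The morphisms are identical''---and does not engage with the issue you raise: a $\PC$-natural transformation $\beta\colon \Psi(F_1)\Rightarrow \Psi(F_2)$ has components $\beta_X$ that are a priori only strictly unital \emph{lax} monoidal (objects of $\Perm_{\PC}(F_1X,F_2X)$), whereas a strict $2$-natural transformation in $\Perm_2$ requires \emph{strong} monoidal components. So you are right that fullness deserves an argument, and your idea of running an Eckmann--Hilton style argument for $\beta$ using additivity and naturality along the fold and projection spans is the natural analogue of \cref{lem: Eckmann--Hilton}.

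That said, the obstacle you flag is real and your sketch does not resolve it. First, you over-ask: strong monoidality only requires $\delta_{\beta_X}$ to be invertible, not the identity. Second, and more substantively, even aiming only for invertibility, the $\PC$-naturality square for $T_\nabla$ yields an equality of composite lax monoidal functors which, when unpacked, relates $\delta_{\beta_X}$ to $\delta_{\beta_{X\sqcup X}}$ through the (generally non-identity) additivity equivalence $F_i(X\sqcup X)\simeq F_i(X)\times F_i(X)$ and the (generally non-identity) isomorphism $F_i(T_\nabla)\simeq \oplus$ from \cref{lem: Eckmann--Hilton}. You have not explained why this forces the \emph{given} structure map $\delta_{\beta_X}$ to be invertible, rather than merely producing \emph{some} natural isomorphism $\beta_X(x)\oplus\beta_X(y)\cong\beta_X(x\oplus y)$; the argument as written also presupposes control over $\delta_{\beta_{X\sqcup X}}$, which is circular. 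The paper does not address this point either, so your write-up is more careful in diagnosis than the paper's, but---as you acknowledge---not yet complete.
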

\begin{proof}
    This is simply an unwinding of the definitions.  A $\PC$-functor $F\colon \mathcal{A}^G_{\PC}\to \Perm_{\PC}$ consists of a permutative category $F(X)$ for all finite $G$-sets $X$, together with strictly unital lax monoidal functors
    \begin{equation}\label{eq:PC lax bit}
        F_{X,Y}\colon \mathcal{A}_{\PC}^G(X,Y)\to \Perm_{\PC}(F(X),F(Y)) 
    \end{equation}
    such that $F_{X,X}(\mathrm{id_X}) = \mathrm{id}_{F(X)}$ and the diagram
    \begin{equation}\label{PC enriched functor diagram}
        \begin{tikzcd}
            \mathcal{A}_{\PC}^G(Y,Z)\times\mathcal{A}_{\PC}^G(X,Y) \ar[r,"\circ"] \ar[d, swap, "F_{Y,Z}\times F_{X,Y}"] & \mathcal{A}_{\PC}^G(X,Z) \ar[d,"F_{X,Z}"] \\
            \Perm_{\PC}(F(Y),F(Z))\times\Perm_{\PC}(F(X),F(Y)) \ar[r,swap, "\circ"] & \Perm_{\PC}(F(X),F(Z))
        \end{tikzcd}
    \end{equation}
    commutes.  An additive strict $2$-functor $F\colon \mathcal{A}^G_2\to \Perm_{2}$ is exactly the same data, except that, by \cref{lem: Eckmann--Hilton}, the functors \eqref{eq:PC lax bit} are strong monoidal instead of simply lax monoidal.  The morphisms are identical.
\end{proof}

\begin{remark}\label{rmk:psi not ess surj}
    The functor $\Psi$ in the last proposition is not essentially surjective since there can be $\PC$-Mackey functors $F$ for which the functors \eqref{eq:PC lax bit} are not strong monoidal.  That being said, all concrete examples of $\PC$-Mackey functors considered in \cite{BohmannOsorno:2015} are in the essential image of $\Psi$. 
\end{remark}

We now compare the strict and pseudo-functor categories by a standard strictification argument, adjusted to account for the permutative and additive requirements. There is an evident inclusion\[
        i\colon \Fun^{\rm add}_{2}(\mathcal{A}^G_{2},\Perm_{2})\hookrightarrow \Mack_{\ps}(\Perm_{2}),
    \] 
and we define a \emph{strictification functor} \[j\colon \Mack_{\ps}(\Perm_2)\to \Fun_{2}^{\mathrm{add}}(\mathcal{A}^G_2,\Perm_2)\]
which runs counter to $i$.

\begin{definition}\label{defn: strictification functor}
    Let $F\colon \mathcal{A}^G_{2}\to \Perm_2$ be an additive pseudo-functor, and define an additive $2$-functor $j(F)\colon \mathcal{A}^G_2\to \Perm_2$ as follows: for any finite $G$-set $X$, let $j(F)(X)$ be the permutative category with objects $(f,a)$ where $f\colon A\to X$ is a map in $\mathcal{A}^G_2$ and $a\in F(A)$.  The morphisms $(f,a)\to (g,b)$ consist of maps $F(f)(a)\to F(g)(b)$ in $F(X)$.  The permutative structure on $j(F)(X)$ is given by
    \[
        (A\xrightarrow{f} X,a)\oplus(B\xrightarrow{g} X,b) = (A\amalg B \xrightarrow{f\amalg g} X, (a,b))
    \]
    where $(a,b)\in F(A)\times F(B)\cong F(A\amalg B)$ uses the additivity of $F$.  This structure is strictly associative since we picked a model of $\mathcal{A}^G_{2}$ where disjoint union is a strictly associative biproduct. 

    If $q\colon X\to Y$ is a map in $\mathcal{A}^G_{2}$, define
    \begin{align*}
        j(F)(q)\colon j(F)(X) & \to j(F)(Y)\\
        (f\colon A\to X,a)& \mapsto (qf,a).
    \end{align*}
    If $r\colon Y\to Z$, we see that $j(rq) = j(r)j(q)$ because $\Perm$ and $\mathcal{A}^G_2$ are strict $2$-categories.  If $\beta\colon q\Rightarrow q'$ is a $2$-cell in $\mathcal{A}^G_2$ then the component \[j(F)(\beta)_{(f,a)}\colon (qf,a)\to (q'f,a)\] is given by the map $F(\beta\cdot f)\colon F(qf)(a)\to F(q'f)(a)$.
    Now suppose $\alpha\colon F\Rightarrow H$ is a pseudo-natural transformation and let $X\in \mathcal{A}^G_2$. We define a map $j(\alpha)_X\colon j(F)(X)\to j(H)(X)$ by 
    \[
        j(\alpha)_X(f\colon A\to X,a) = (f,\alpha_A(a))\in j(H)(X).
    \]
    Note that for any $q\colon X\to Y$ the diagram
    \[
        \begin{tikzcd}
        j(F)(X) \ar[d,"j(F)(q)"'] \ar[r,"j(\alpha)_X"]& j(H)(X) \ar[d,"j(H)(q)"] \\
        j(F)(Y) \ar[r, swap, "j(\alpha)_Y"]& j(H)(Y)
        \end{tikzcd}
    \]
    commutes because both composites take $(f,a)$ to $(qf,\alpha_A(a))$.  In sum, we have constructed a functor
    \[
        j\colon \Mack_{\ps}(\Perm_2)\to \Fun_{2}^{\mathrm{add}}(\mathcal{A}^G_2,\Perm_2).
    \]
\end{definition} 

\begin{lemma}\label{i and j nat 1}
    There is a natural transformation $\mu\colon i\circ j\Rightarrow 1$ which, objectwise, is composed of categorical equivalences.    
\end{lemma}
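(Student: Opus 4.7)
The plan is to define an explicit candidate $\epsilon_F\colon ij(F) \Rightarrow F$ by a ``projection-like'' functor from the total category $j(F)(X)$ down to $F(X)$, verify that its components are categorical equivalences, and check that it is natural in $F$. At each finite $G$-set $X$, I would set
\[
\epsilon_{F,X}\colon j(F)(X) \to F(X), \qquad (f\colon A \to X,\, a) \mapsto F(f)(a),
\]
with the tautological action on morphisms: a morphism $(f,a) \to (g,b)$ in $j(F)(X)$ is \emph{by definition} a morphism $F(f)(a) \to F(g)(b)$ in $F(X)$. This makes $\epsilon_{F,X}$ fully faithful by construction, and essentially surjective because $(\id_X, a) \mapsto F(\id_X)(a) = a$ using the strict unitality of the pseudo-functor $F$. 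Hence each component is a categorical equivalence.

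Next I would equip each $\epsilon_{F,X}$ with the structure of a strictly unital strong monoidal functor and assemble the collection into a pseudo-natural transformation $\epsilon_F$. The monoidal structure $2$-cell compares $\epsilon_{F,X}\bigl((f,a) \oplus (g,b)\bigr) = F(f \amalg g)(a,b)$ with $F(f)(a) \oplus F(g)(b)$; the required natural isomorphism arises from the Eckmann--Hilton-type argument of \cref{lem: Eckmann--Hilton}, now applied to the pseudo-functor $F$ so that the strict equalities there become the additivity equivalence $F(A \amalg B) \simeq F(A) \times F(B)$ and the composition $2$-cells of $F$. The pseudo-naturality $2$-cell of $\epsilon_F$ at a $1$-cell $q\colon X \to Y$ is supplied componentwise by the composition $2$-cell $F(qf)(a) \cong F(q)(F(f)(a))$ of $F$, and the coherence axioms for $\epsilon_F$ follow directly from the pseudo-functor coherences of $F$.

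Finally, for naturality in $F$: given a morphism $\alpha\colon F \Rightarrow H$ of pseudo-functors, the square at level $X$ applied to $(f,a)$ produces $\alpha_X(F(f)(a))$ on one side and $H(f)(\alpha_A(a))$ on the other, and these are identified by the pseudo-naturality $2$-cell of $\alpha$ at $f$. Conceptually, $\epsilon$ is the evident projection from the Grothendieck-construction-like category $j(F)$ back to $F$, so the naturality is essentially tautological. The main obstacle is the coherence bookkeeping: ensuring that the composition $2$-cells of $F$ and $H$, the additivity equivalences, and the pseudo-naturality $2$-cells of $\alpha$ weave together consistently with the axioms for (pseudo-)natural transformations of pseudo-functors. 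This is where the bulk of the writeup sits, but it is routine and introduces no new ideas beyond what is already present in the standard strictification story.
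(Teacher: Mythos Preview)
Your proposal is correct and follows essentially the same approach as the paper: both define $\epsilon_{F,X}(f,a)=F(f)(a)$, observe that this is fully faithful by the very definition of morphisms in $j(F)(X)$, and obtain (essential) surjectivity from strict unitality via $(\id_X,a)\mapsto a$. The paper's write-up is considerably more terse---it does not spell out the monoidal structure on $\epsilon_{F,X}$, the pseudo-naturality $2$-cells, or the naturality in $F$---whereas you unpack these coherences explicitly; but the underlying argument is the same.
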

\begin{proof}
    For any additive pseudo-functor $F\colon \mathcal{A}^G_2\to \Perm_2$, define a pseudo-natural transformation $\mu_F\colon ij(F)\Rightarrow F$ by the component functors
    \begin{align*}
        (\mu_F)_X\colon ij(F)(X) &\to F(X) \\
        (f\colon A\to X,a) & \mapsto F(f)(a).
    \end{align*}
    Note that the morphisms in $ijF(X)$ are defined so that $(\mu_F)_X$ is a fully faithful functor. Moreover, since $F$ is strictly unital, taking the map $f$ to be an identity implies that the $(\mu_F)_X$ is surjective on objects.  Thus $(\mu_F)_X$ is a categorical equivalence for all $X$.
\end{proof}

\begin{lemma}\label{i and j nat 2}
    There is a natural transformation $\epsilon \colon j\circ i\Rightarrow 1$ whose components are strict $2$-natural transformations, each of whose component functors is a categorical equivalence.
\end{lemma}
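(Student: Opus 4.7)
My plan is to reuse the construction from \cref{i and j nat 1}, taking advantage of the strictness of objects in $\Fun^{\rm add}_2(\mathcal{A}^G_2,\Perm_2)$ to upgrade the pseudo-natural transformation to a strict $2$-natural one.

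First, for each strict additive $2$-functor $F$, I would define the component $\epsilon_F\colon j(i(F)) = j(F) \Rightarrow F$ by the same formula as before: $(\epsilon_F)_X\colon j(F)(X) \to F(X)$ sends an object $(f\colon A \to X, a)$ to $F(f)(a)$ and sends a morphism $(f,a)\to(g,b)$ (which, by construction, is an arrow $F(f)(a)\to F(g)(b)$ in $F(X)$) to itself. Exactly as in the previous lemma, this functor is fully faithful by construction and essentially surjective since we may take $f=\id_X$, so each $(\epsilon_F)_X$ is a categorical equivalence. Strong monoidality and strict unitality of $(\epsilon_F)_X$ follow from the definition of the permutative structure on $j(F)(X)$, together with additivity and strict unitality of $F$; this promotes $(\epsilon_F)_X$ to a $1$-morphism in $\Perm_2$.

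The main new content is verifying that $\epsilon_F$ is strictly $2$-natural rather than merely pseudo-natural. For any $1$-morphism $q\colon X\to Y$ in $\mathcal{A}^G_2$, the relevant naturality square commutes on the nose: going one way sends $(f,a)$ to $F(q)(F(f)(a))$, and the other to $F(qf)(a)$, and these agree because $F$ is a strict $2$-functor, so $F(qf) = F(q)\circ F(f)$. Naturality with respect to a $2$-cell $\beta\colon q\Rightarrow q'$ is analogous, using again that $F$ respects $2$-cell composition strictly. Finally, I would check naturality of $\epsilon$ in $F$: given a strict $2$-natural transformation $\alpha\colon F\Rightarrow H$, the required square of functors between $j(F)$, $F$, $j(H)$, and $H$ commutes strictly because $\alpha_X\circ F(f) = H(f)\circ\alpha_A$ holds on the nose by strictness of $\alpha$.

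The main obstacle is not conceptual but bookkeeping: one must make sure the various strictness properties propagate correctly through the permutative structure on $j(F)(X)$ and through the coherence data of $\Perm_2$. However, since both $F$ and the morphisms between such $F$'s are strict, every $2$-cell that would have arisen as nontrivial structure in the pseudo-natural case of \cref{i and j nat 1} is now forced to be an identity, so the verification reduces to unpacking \cref{defn: strictification functor} and invoking strictness.
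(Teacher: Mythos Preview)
Your proposal is correct and takes essentially the same approach as the paper: you reuse the transformation $\epsilon$ from \cref{i and j nat 1} and observe that when $F$ is a strict $2$-functor the compositor $F(qf)=F(q)\circ F(f)$ forces the naturality squares to commute on the nose, upgrading $\epsilon_F$ from pseudo-natural to strictly $2$-natural. The paper's proof is exactly this one-sentence observation; you have simply unpacked the verification in more detail.
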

\begin{proof}
The claim follows from the observation that the pseudo-natural transformation $\mu_F$ defined in the previous lemma is actually a strict natural transformation whenever $F$ is a strict $2$-functor. 
\end{proof}

   \begin{remark}\label{remark about strictification nonsense}
    One might have hoped to phrase \cref{i and j nat 1,i and j nat 2} as ``there are natural isomorphisms,'' but unfortunately this is not true since categorical equivalences are not all categorical isomorphisms.  Later, we will localize these categories at classes of morphisms which contain all categorical equivalences, and after these localizations the maps $i$ and $j$ do become categorical equivalences.
\end{remark}

\begin{corollary}\label{cor:compare ps and PC}
    There are functors \[
\begin{tikzcd}[column sep = large]
    \Mack_{\PC}(\Perm_{\PC}) & \Fun^{\rm add}_{2}(\mathcal{A}^G_{2},\Perm_{2}) \ar[l,hook',swap, "\Psi"]\ar[r, hook,shift right =2mm, swap, "i"] & \Mack_{\ps}(\Perm_{2}) \ar[l,shift right =2mm, swap, "j"]
\end{tikzcd}
    \] such that\begin{itemize}
        \item there is a natural transformation $\mu \colon i\circ j\Rightarrow 1$ whose components are pseudo-natural transformations, each of whose component functors are categorical equivalences, 
        \item there is a natural transformation $\epsilon \colon j\circ i\Rightarrow 1$ whose components are strict $2$-natural transformations, each of whose component functors are categorical equivalences, 
        \item $\Psi$ is the fully faithful functor of \cref{prop: strict to PC}.
    \end{itemize}
\end{corollary}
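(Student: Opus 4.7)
The proof is essentially a compilation of the constructions and lemmas already established in this subsection. I would take $\Psi$ to be the fully faithful embedding produced in \cref{prop: strict to PC}, let $i$ be the tautological inclusion that views every strict additive $2$-functor (and every strict $2$-natural transformation) as a strictly unital additive pseudo-functor whose coherence $2$-cells are identities, and let $j$ be the strictification functor defined in \cref{defn: strictification functor}. With these choices, the third bullet point is immediate from \cref{prop: strict to PC}, and the existence of the diagram of functors follows once we verify that $j$ actually lands in additive strict $2$-functors, which was part of the construction (the strict associativity coming from the chosen totally ordered model of disjoint union on finite $G$-sets).

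For the first bullet, I would invoke \cref{i and j nat 1} verbatim: the component of $i\circ j \Rightarrow 1$ at an additive pseudo-functor $F$ is given by $\epsilon_{F,X}(f\colon A\to X, a) = F(f)(a)$, which is fully faithful by the very definition of morphisms in $ij(F)(X)$ and essentially surjective because strict unitality of $F$ lets us realize every $a\in F(X)$ as $F(\mathrm{id}_X)(a)$. For the second bullet, I would observe, as in \cref{i and j nat 2}, that when $F$ is already a strict $2$-functor the same formula defines a strict $2$-natural transformation, with component functors that are again fully faithful and essentially surjective for the same reasons.

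Since each ingredient has been laid out previously, the corollary is largely a packaging statement and I do not expect any genuine obstacle. The only small thing to be careful about is naturality of $\epsilon$ in the functor $F$, i.e., that $\epsilon$ is natural with respect to pseudo-natural (respectively strict) transformations and not just objectwise an equivalence; this follows directly from the explicit formula for $\epsilon_{F,X}$ and the componentwise definition of $j$ on morphisms given in \cref{defn: strictification functor}. The real technical work was already done upstream: \cref{lem: Eckmann--Hilton} (which uses an Eckmann--Hilton argument to show that additivity forces each homwise functor of a strict additive $2$-functor to be strong monoidal) is what makes $\Psi$ well-defined, and without it the diagram in the corollary would not even type-check.
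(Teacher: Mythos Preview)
Your proposal is correct and matches the paper's approach exactly: the corollary is a packaging statement with no separate proof in the paper, and you have correctly identified that $\Psi$ comes from \cref{prop: strict to PC}, $i$ is the evident inclusion introduced just before \cref{defn: strictification functor}, $j$ is the strictification functor of \cref{defn: strictification functor}, and the two bullet points about $i\circ j$ and $j\circ i$ are precisely \cref{i and j nat 1} and \cref{i and j nat 2}. Your additional remarks about naturality in $F$ and the role of \cref{lem: Eckmann--Hilton} are accurate and consistent with the paper's development.
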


We end this section by showing how to replace $\Perm_2$ in \cref{cor:compare ps and PC} with a category of symmetric monoidal categories. Let $\Sym_2$ denote the $(2,1)$-category of symmetric monoidal categories, strictly unital strong monoidal functors, and natural isomorphisms.

\begin{definition}
    We say $F\colon \mathcal{A}^G_2\rightarrow \Sym_2$ is a \textit{symmetric monoidal Mackey functor} if it is an element of $\Mack_{\ps}(\Sym_2):=\Fun^{\mathrm{add}}_{\ps}(\mathcal{A}^G_2, \Sym_2)$.
\end{definition}

It is well-known, (see, e.g.\ \cite{Isbell}) that every symmetric monoidal category is categorically equivalent to a permutative category. A similar construction to \cref{defn: strictification functor} produces a pair of functors as in \cref{cor:compare ps and PC}.

\begin{lemma}\label{lem: compare perm and sym}
    There are functors \[
\begin{tikzcd}[column sep = large]
    \Mack_{\ps}(\Perm_{2}) \ar[r, hook,shift right =2mm, swap, "k"] & \Mack_{\ps}(\Sym_{2}) \ar[l,shift right =2mm, swap, "\ell"]
\end{tikzcd}
    \] such that\begin{itemize}
        \item there is a natural transformation $\mu \colon k\circ \ell\Rightarrow 1$ whose components are pseudo-natural transformations, each of whose component functors are categorical equivalences,
        \item there is a natural transformation $\epsilon \colon \ell\circ k\Rightarrow 1$ whose components are strict $2$-natural transformations, each of whose component functors are categorical equivalences.
    \end{itemize}
\end{lemma}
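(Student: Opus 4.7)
My plan is to adapt the strictification argument of \cref{defn: strictification functor,i and j nat 1,i and j nat 2} to the setting of changing between symmetric monoidal and permutative \emph{targets}, rather than between pseudo-functors and strict $2$-functors. Define $k$ as postcomposition with the canonical inclusion $\iota \colon \Perm_2 \hookrightarrow \Sym_2$; this $\iota$ is a strict $2$-functor that preserves products on the nose and is fully faithful on morphism groupoids, so postcomposition preserves additivity and sends pseudo-natural transformations to pseudo-natural transformations. For $\ell$, I would use a classical strictification $2$-functor $L \colon \Sym_2 \to \Perm_2$ (such as the free permutative category construction of \cite{Isbell} as used in the proof of \cite[Theorem 3.9]{Mandell/invKT}) and set $\ell(F) := L \circ F$. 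The strictification comes equipped with a strong monoidal equivalence $\eta_{\cat C} \colon L(\cat C) \to \cat C$ for each $\cat C \in \Sym_2$, assembling into a pseudo-natural equivalence $\iota L \simeq 1_{\Sym_2}$.

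For the natural equivalence $k \circ \ell \simeq 1$, the plan is to apply $\eta$ levelwise: for each $F \in \Mack_{\ps}(\Sym_2)$ and each finite $G$-set $X$, the functor $\eta_{F(X)}$ provides the $X$-component of a pseudo-natural equivalence $k\ell(F) \Rightarrow F$. The structure $2$-cells of this pseudo-natural transformation come from the pseudo-naturality of $\eta$ evaluated at the structure $2$-cells of $F$, and the overall naturality in $F$ is then automatic.

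For the strict $2$-natural transformation $\epsilon \colon \ell \circ k \Rightarrow 1$, I would use that when $\cat C$ is already permutative, the counit $\eta_{\cat C}\colon L(\iota\cat C) \to \cat C$ can be arranged to be a strict monoidal functor that is fully faithful and essentially surjective, and the pseudo-naturality $2$-cells of $\eta$ restricted to $\Perm_2$ can be arranged to be identities. Applying $\eta$ levelwise to any permutative Mackey functor $G$ then yields a strict $2$-natural transformation $\ell k(G) \Rightarrow G$ whose component functors are fully faithful and essentially surjective, and these assemble naturally in $G$.

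The main obstacle is verifying the strictness claim for $\eta$ on the subcategory $\Perm_2 \subset \Sym_2$, i.e.\ that the strictification counit restricts to a strictly $2$-natural equivalence on permutative inputs. I would address this either by choosing an explicit model of $L$ in which $L(\iota\cat C)$ retains $\cat C$ as a strictly permutative retract, or---mirroring the strategy of \cref{defn: strictification functor}---by constructing $\ell$ directly via a Grothendieck-style formula with objects $(f\colon A \to X, a)$, $a \in F(A)$, whose permutative structure on $\ell(F)(X)$ is inherited from the strict biproduct $\amalg$ on $\mathcal{A}^G_2$; this bakes in the strictness exactly as $j$ did for the pseudo-to-strict strictification.
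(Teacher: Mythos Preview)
Your fallback option---the Grothendieck-style construction mirroring \cref{defn: strictification functor}---is exactly what the paper does; its entire argument is the sentence ``A similar construction to \cref{defn: strictification functor} produces a pair of functors as in \cref{cor:compare ps and PC}.'' So your proposal is correct once you commit to that route, and the verification of the two bullet points proceeds word-for-word as in \cref{i and j nat 1,i and j nat 2}.

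Your primary approach via postcomposition with a levelwise Isbell strictification $L$ runs into the obstacle you flag, and that obstacle is real rather than cosmetic. The comparison $\eta_{\cat C}\colon L(\cat C)\to\cat C$ sends a word $(c_1,\dots,c_n)$ to $c_1\oplus\cdots\oplus c_n$; strict $2$-naturality on $\Perm_2$ would force $F(c_1\oplus\cdots\oplus c_n)=F(c_1)\oplus\cdots\oplus F(c_n)$ on the nose for every $1$-morphism $F$ in $\Perm_2$. But the $1$-morphisms in $\Perm_2$ are only \emph{strong} (strictly unital) monoidal, not strict, so $\eta$ restricted to $\Perm_2$ is only pseudo-natural. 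Whiskering it with a permutative Mackey functor $G$ therefore yields only a pseudo-natural transformation $\ell k(G)\Rightarrow G$, not the strict $2$-natural one the lemma asks for. No evident modification of $L$ repairs this without effectively rebuilding the Grothendieck construction, so your instinct to pivot is the right one, and it lands you on the paper's proof.
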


The reason for passing to symmetric monoidal categories is twofold. First, while permutative categories are convenient for proving theorems, actual examples of monoidal categories tend to be symmetric monoidal, and not permutative. For instance, the usual tensor product of abelian groups is symmetric monoidal but not permutative.  A second reason to consider these objects is that they provide a framework for understanding different kinds of monoidal structures which arise in equivariant stable homotopy theory.  

In \cite{HillHopkins}, the authors consider a version of symmetric monoidal Mackey functors which is the same in spirit as the definition we give here. At the level of algebra, their work provides a categorical interpretation of the difference between Tambara functors and Green functors \cite{HillMazur,Hoyer}.  In homotopy theory, these ideas give a way of understanding the various kinds of equivariant $E_{\infty}$-operads considered by Blumberg--Hill \cite{BlumbergHillOperadic}. 

\subsection{Mackey functors in \texorpdfstring{$\infty$}{TEXT}-categories}\label{sec:infty Mack}

Barwick and coauthors Glasman--Shah developed and studied an $\infty$-categorical approach to spectral Mackey functors \cite{BarwickOne,Barwick2}. A comparison between the $\infty$-categorical Mackey functors and genuine $G$-spectra can be found in \cite{Nardin, CMNN}. In this subsection, we review some of their definitions and results.

To define the $\infty$-categorical version of the Burnside category we first explain how to build an $\infty$-category out of a $(2,1)$-category.  Recall every ordinary category $\cat C$ determines an $\infty$-category $N_*(\cat C)$ via the nerve, and for $2$-categories there is a similar construction called the \emph{Duskin nerve}, denoted $N^D_*(\cat C)$ \cite{duskin}. When $\cat{C}$ is a strict $2$-category, the Duskin nerve of $\cat{C}$ is isomorphic (as a simplicial set) to the homotopy coherent nerve of the simplicial category obtained from $\cat{C}$ by applying the nerve to all the morphism categories \cite[Example 2.4.3.11]{kerodon}.

\begin{definition}
    The \textit{effective Burnside category} $\mathcal{A}^G_{\rm eff}$ is the Duskin nerve of $\mathcal{A}^G_2$.
\end{definition}

\begin{remark}
    We note that this definition of the effective Burnside category is not actually the one given in \cite{BarwickOne}.  As noted in \cite[Definition 2.1]{CMNN}, however, it is equivalent as an $\infty$-category to Barwick's effective Burnside category.
\end{remark}

Recall that an $\infty$-category $\cat C$ is said to be \textit{(pre-)additive} if and only if its homotopy category $h\cat C$ is (pre-)additive. Mackey functors can be valued in any preadditive $\infty$-category.

\begin{definition}\label{definition: infinity Mack functors}
    A \textit{Mackey functor} in a preadditive $\infty$-category $\cat C$ is a functor $\mathcal{A}^G_{\rm eff}\to \cat C$ which takes finite coproducts to products. The $\infty$-category of Mackey functors is denoted  $\Mack_\infty(\cat C)$.
    When $\cat C$ is the $\infty$-category $\smc$ of symmetric monoidal $\infty$-categories, we refer to such Mackey functors as symmetric monoidal $\infty$-Mackey functors. 
\end{definition}

If $\cat C$ is presentably symmetric monoidal, then $\Mack_\infty(\cat C)$ is also presentably symmetric monoidal, essentially via Day convolution (see \cite{Barwick2} or \cite[Construction 2.4]{CMNN}). As noted in \cite[Notation 6.3]{BarwickOne}, categories of Mackey functors are functorial in additive functors of $\infty$-categories. 

If $\cat C$ is an additive $1$-category, then $\Mack_{\infty}(N_*\cat C)\simeq N_*(\Mack(\cat C))$ (see \cite[Example 6.2]{BarwickOne}). This identification makes use of the fact that for $1$-categories $\cat C$ and $\cat D$, there is an equivalence of $\infty$-categories
\begin{equation}\label{eq: nerve iso}
\Fun_\infty(N_*(\cat C), N_*(\cat D)) \simeq N_*(\Fun(\cat C, \cat D)).
\end{equation}
We now establish a similar identification for $\infty$-categories of Mackey functors valued in Duskin nerves of $(2,1)$-categories, by proving that a version of \cref{eq: nerve iso} also holds for $(2,1)$-categories, where we replace the nerve with the Duskin nerve. 

%We may therefore apply the Duskin nerve to $\Fun_{\ps}(\cat C, \cat D)$  to obtain an $\infty$-category. Moreover, we obtain an $(\infty, 1)$-category of $(2,1)$-categories $N_*^D\Cat_{(2,1)}$ by first applying a change of enrichment to view $\Cat_{(2,1)}$ as a simplicially enriched category, and then taking homotopy coherent nerve.\mcnote{this isn't right...}
%\mpnoteil{What I would say is that the $\infty$-category $\Cat_{(2,1)}$ is obtained from the category of $(2,1)$-categories seen as enriched over $\Cat$, and then apply the nerve functor from $\Cat$ to simplicial set to see it as a simplicially enriched functor (the nerve is lax monoidal), and then apply the homotopy coherent nerve. Essentially, the mapping space $\mathrm{Map}_{\Cat_{(2,1)}}(\cC, \cD)$ should be the Kan complex obtained as the nerve of the groupoid of pseudo-functors $\cC\to \cD$, up to natural isomorphisms.
%Say that the Duskin nerve should be simply viewed the change of enrichment along the classifying space functor from $\Cat$ to the $\infty$-category $\cat S$ of spaces, and we denote by $h^2$ its left adjoint}

%\mpnoteil{Based on my note above, let me re-attempt to do the remark, if you agree, transfer the label ``rmk: Duskin is right adj" to it}

Recall that the category of bicategories and strictly unital pseudo-functors forms a closed symmetric monoidal category, with internal hom $[\cat C, \cat D]_{\ps}$ given by the bicategory of (strictly unital) pseudo-functors, pseudo-natural transformations, and modifications (c.f. \cite{campbell_how_2019}); we use the notation $[\cat C, \cat D]_{\ps}$ to distinguish this bicategory from the $1$-category $\Fun_{\ps}(\cat C, \cat D)$. Observe that if $\cat C, \cat D$ are $(2,1)$-categories, then $[\cat C, \cat D]_{\ps}$ is also a (2,1)-category \cite[Corollary 4.4.13]{JohnsonYau2Cat}; in particular, all modifications are invertible as $\cat D$ is a $(2,1)$-category. 

\begin{proposition}\label{prop:dusk N and fun}
    Let $\cat C$ and $\cat D$ be $(2,1)$-categories. There is an equivalence of $\infty$-categories \[
    N^D_*\left( [\cat C, \cat D]_{\ps} \right) \simeq \Fun_{\infty}\left(N^D_*(\cat C), N^D_*(\cat D) \right).
    \]
\end{proposition}

Before proving \cref{prop:dusk N and fun}, we recall some necessary preliminaries, in particular that there is an $\infty$-categorical adjunction \begin{equation}\label{eq:Duskin N adj}
        \begin{tikzcd}
	{\Cat_{\infty}} && {\Cat_{(2,1)}}
	\arrow[""{name=0, anchor=center, inner sep=0}, "{h_2}", curve={height=-12pt}, from=1-1, to=1-3]
	\arrow[""{name=1, anchor=center, inner sep=0}, "{N^D_*}", curve={height=-12pt}, from=1-3, to=1-1]
	\arrow["\dashv"{anchor=center, rotate=-90}, draw=none, from=0, to=1]
\end{tikzcd}
\end{equation}
 where $h_2$ is a ``$(2,1)$-categorical truncation'' functor. The $\infty$-category $\Cat_{(2,1)}$ is formed as follows: there is a 2-category of $(2,1)$-categories whose hom categories are the groupoids of strictly unital pseudo-functors and pseudo-natural isomorphisms. The $\infty$-category $\Cat_{(2,1)}$ is obtained by first changing enrichment along the nerve functor and then applying homotopy coherent nerve. In particular, the Kan complex mapping space $\Map_{\Cat_{(2,1)}}(\cC, \cD)$ between $(2,1)$-categories $\cC$ and $\cD$ is the nerve of the groupoid of strictly unital pseudo-functors $\cC\to \cD$ and pseudo-natural isomorphisms. The adjunction above then follows from the composition of the $2$-categorical adjunctions \[\begin{tikzcd}
hQ\Cat && \Cat_{\rm Kan} && {\Cat_{(2,1)}}
	\arrow[""{name=0, anchor=center, inner sep=0}, "{C}", curve={height=-12pt}, from=1-1, to=1-3]
	\arrow[""{name=1, anchor=center, inner sep=0}, "{N^{hc}}", curve={height=-12pt}, from=1-3, to=1-1]
	\arrow["\dashv"{anchor=center, rotate=-90}, draw=none, from=0, to=1]
    	\arrow[""{name=0, anchor=center, inner sep=0}, "{h_\bullet}", curve={height=-12pt}, from=1-3, to=1-5]
	\arrow[""{name=1, anchor=center, inner sep=0}, "{N_\bullet}", curve={height=-12pt}, from=1-5, to=1-3]
	\arrow["\dashv"{anchor=center, rotate=-90}, draw=none, from=0, to=1]
\end{tikzcd}\]
where $hQ\Cat$ is the $2$-homotopy category of quasicategories (c.f. \cite[Example 2.1.5]{riehl_elements_2022}), $C\dashv N^{hc}$ is the homotopy coherent nerve, and $h_\bullet \dashv N_\bullet$ is the adjunction given by change of enrichment along the ordinary nerve \cite[Digression 1.4.2]{riehl_elements_2022}. 
%We note that $\Cat_{\infty}$ may be modeled by the homotopy coherent nerve of $hQ\Cat$.

%In fact, the nerve functor from $\Cat$ to the category of simplicial sets determines a right adjoint functor \mcnote{I'm a little confused by this paragraph}
%\[N^D_*\colon \Cat_{(2,1)}\longrightarrow \Cat_\infty\]
%that sends any $(2,1)$-category $\cC$ to its homotopy coherent nerve in which one applies the Bergner fibrant replacement of the simplicial category obtained from $\cC$ by applying the nerve on the hom categories. This is precisely the Duskin nerve functor, see \cite[Digression 1.4.2]{riehl_elements_2022}.
%We denote its left adjoint $h_2\colon \Cat_\infty\to \Cat_{(2,1)}$.

\begin{remark}\label{rmk:Duskin is right adj}
We take note of a few key properties of the adjunction $h_2\dashv N_*^D$. As the Duskin nerve is fully-faithful (see \cite[\href{https://kerodon.net/tag/00AV}{Remark 00AV}]{kerodon}), for any $(2,1)$-category $\cC$ the counit of the adjunction induces an equivalence of $(2,1)$-categories $h_2N^D_*(\cC)\simeq \cC$. Moreover, one can check that $h_2$ preserves finite products, where products of $(2,1)$-categories are the usual Cartesian product.
%Notice the usual Cartesian product of $(2,1)$-categories determines a closed symmetric $\Cat$-enriched monoidal structure, and thus a closed symmetric monoidal structure on the $\infty$-category $\Cat_{(2,1)}$ with internal hom given by $\Fun_\ps(\cC, \cD)$ described above. 
\end{remark}

\begin{proof}[Proof of \cref{prop:dusk N and fun}]
    By the $\infty$-categorical Yoneda lemma, it suffices to show that there is an equivalence of mapping spaces\[
\Map_{\Cat_{\infty}}(\cat A, N^D_*([\cat C, \cat D]_{\ps}) \simeq \Map_{\Cat_{\infty}}(\cat A, \Fun_\infty(N^D_*\cat C, N^D_*\cat D))
\] for any $\infty$-category $\cat A$. 
By the adjunction of \cref{eq:Duskin N adj}, there is an equivalence of mapping spaces\[
\Map_{\Cat_{\infty}}(\cat A, N^D_*([\cat C, \cat D]_{\ps}) \simeq \Map_{\Cat_{(2,1)}}(h_2\cat A, [\cat C, \cat D]_{\ps}).
\] Hence it suffices to show that $\Map_{\Cat_{\infty}}(\cat A, \Fun_\infty(N^D_*\cat C, N^D_*\cat D))$ is equivalent to the latter mapping space. This follows from a series of equivalences\begin{align*}
    \Map_{\Cat_{\infty}}(\cat A, \Fun_{\infty}(N^D_*\cat C, N^D_*\cat D)) & \simeq \Map_{\Cat_{\infty}}(\cat A\times N^D_*\cat C, N^D_*\cat D))\\
    &\simeq \Map_{\Cat_{(2,1)}}(h_2(\cat A\times N^D_*\cat C), N_*^D\cat D)\\
    &\simeq \Map_{\Cat_{(2,1)}}(h_2\cat A\times \cat C, N_*^D\cat D)\\
    &\simeq \Map_{\Cat_{(2,1)}}(h_2A, [\cat C, \cat D]_{\ps}),
\end{align*}
using the adjunction of \cref{eq:Duskin N adj}, the closed monoidal structure of both $\Cat_{\infty}$ and $\Cat_{(2,1)}$, and properties of $h_2$ as noted in \cref{rmk:Duskin is right adj}.
\end{proof}

\begin{remark}
More generally, if $\cat C$ and $\cat D$ are $2$-categories, there is a bijection between strictly unital lax functors $\cat C\to \cat D$ and morphisms of simplicial sets $N^D_*\cat C\to N^D_*\cat D$. However, the Duskin nerve produces an $\infty$-category if and only if the input is a $(2,1)$-category \cite{duskin}, in which case the notions of lax and pseudo-functors agree (as noted in \cref{rem:lax versus pseudo}). 
\end{remark}

For the purposes of Mackey functors, we also want to restrict to subcategories of additive functors.

\begin{lemma}\label{lem:dusk N and additive fun}
    If $\cat C$ and $\cat D$ are additive $(2,1)$-categories, there is an equivalence of $\infty$-categories\[
    N_*^D\left( [\cat C, \cat D]_{\ps}^{\rm add} \right) \simeq \Fun_{\infty}^{\rm add}\left(N^D_*(\cat C), N^D_*(\cat D) \right),
    \]
\end{lemma}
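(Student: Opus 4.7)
The plan is to deduce this lemma from \cref{prop:dusk N and fun} by verifying that the equivalence given there restricts to the full subcategories of additive functors on both sides. Since both $\Fun_{\ps}^{\rm add}$ and $\Fun_\infty^{\rm add}$ are defined by imposing a property (product preservation) on the ambient functor categories, it suffices to check that the equivalence of \cref{prop:dusk N and fun} sends additive pseudo-functors to additive $\infty$-functors and vice versa.

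The key intermediate observation is that the Duskin nerve preserves products: directly from its definition, there is a natural isomorphism $N^D_*(\cat C \times \cat C') \cong N^D_*(\cat C) \times N^D_*(\cat C')$ of simplicial sets. Moreover, because the Duskin nerve of a $(2,1)$-category realizes its mapping groupoids as the mapping spaces of the associated $\infty$-category, a 2-categorical product $X \times Y$ in an additive 2-category $\cat C$ (characterized by a natural equivalence of groupoids $\cat C(Z, X \times Y) \simeq \cat C(Z, X) \times \cat C(Z, Y)$) becomes an $\infty$-categorical product in $N^D_*(\cat C)$. In particular, $N^D_*(\cat C)$ and $N^D_*(\cat D)$ are preadditive $\infty$-categories whose finite products are represented by the underlying 2-categorical biproducts.

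With this in hand, the comparison is immediate: a pseudo-functor $F \colon \cat C \to \cat D$ preserves 2-categorical products up to equivalence if and only if the associated $\infty$-functor $N^D_*(F)$ preserves finite $\infty$-categorical products, since both conditions amount to the canonical comparison map $F(X \times Y) \to F(X) \times F(Y)$ being an equivalence (together with preservation of the zero object). Hence \cref{prop:dusk N and fun} restricts to the claimed equivalence of full subcategories.

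The main technical step will be carefully identifying the 2-categorical universal property of products with the $\infty$-categorical one under the Duskin nerve. This is a routine unwinding in the $(2,1)$-categorical setting, using that the Duskin nerve of a 1-groupoid agrees with its usual nerve, that equivalences of groupoids correspond to Kan equivalences of their nerves, and that the mapping spaces in $N^D_*(\cat C)$ are computed explicitly as the nerves of the hom-groupoids $\cat C(X, Y)$.
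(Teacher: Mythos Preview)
Your proposal is correct and follows essentially the same approach as the paper: both deduce the lemma from \cref{prop:dusk N and fun} by observing that the equivalence there restricts to the full subcategories of additive functors. The paper's proof simply asserts this restriction is ``straightforward to check,'' while you have supplied the details of that check (that the Duskin nerve preserves products and identifies the $2$-categorical and $\infty$-categorical product conditions).
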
\begin{proof}
The additive condition is just a condition on $0$-cells, and it is straightforward to check that the equivalence described in \cite{duskin} (see also \cite[\href{https://kerodon.net/tag/00AU}{Tag 00AU}]{kerodon}) sends additive functors to additive functors.
\end{proof}

We now prove a variant of the above which takes into account localizations.  Recall that if $W$ is a class of morphisms in an $\infty$-category $\cat A$, then there is an associated localization $\cat A[W^{-1}]$ and a functor $\cat A\to \cat A[W^{-1}]$, universal among those functors which send all morphisms in $W$ to equivalences \cite[1.3.4.1]{HA}. 

\begin{remark}
    Suppose that $\cat{D}$ is a $(2,1)$-category and $W$ is a collection of $1$-cells.  The morphisms in the $\infty$-category $N^D_*(\cat{D})$ are the same as the $1$-cells in $\cat{D}$, and so the localization $N^D_*(\cat{D})[W^{-1}]$ makes sense.
\end{remark}

\begin{notation}
    Suppose that $\cat{C}$ and $\cat{D}$ are $\infty$-categories and $W\subset \cat{D}$ is some collection of morphisms. We will write $W_{\lvl}$ for the collection of morphisms in $\Fun_{\infty}(\cat{C},\cat{D})$ for the collection of natural transformations $\alpha\colon F\Rightarrow G$ such that for all $x\in \cat{C}$ the component $\alpha_x\colon F(x)\to G(x)$ is in $W$.  
\end{notation}

\begin{proposition}\label{lem:fun and nerve of DK localization}
Let $\cat C$ and $\cat D$ be $(2,1)$-categories and let $(N_*^D(\cat D))[W^{-1}]$ be the localization of the Duskin nerve of $\cat D$ with respect to a class of weak equivalences $W\subset \cat D$. There is an equivalence of $\infty$-categories
\[
N_*^D\left( [\cat C, \cat D]_{\ps} \right) \left[ W_{\lvl}^{-1}\right]\simeq \Fun_{\infty}\left(N^D_*(\cat C), \left( N^D_*(\cat D) \right)\left[W^{-1}\right] \right).
\]
\end{proposition}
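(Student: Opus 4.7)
The plan is to bootstrap from \cref{prop:dusk N and fun} using the fact that localization of a functor $\infty$-category at pointwise weak equivalences is computed by post-composition with the target's localization.

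First, I would apply \cref{prop:dusk N and fun} to identify
\[
N_*\left( \Fun_{\ps}(\cat C, \cat D) \right) \simeq \Fun_{\infty}\left(N^D_*(\cat C), N^D_*(\cat D) \right),
\]
and, unwinding this equivalence, observe that the class $W_\textup{lvl}$ of pseudo-natural transformations with components in $W$ on the left matches the class of natural transformations between $\infty$-functors with components in $W \subseteq N^D_*(\cat D)$ on the right; call this latter class $W_\textup{lvl}$ as well. It therefore suffices to show that the localization of $\Fun_\infty(N^D_*(\cat C), N^D_*(\cat D))$ at $W_\textup{lvl}$ agrees with $\Fun_\infty(N^D_*(\cat C), N^D_*(\cat D)[W^{-1}])$.

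Second, the localization $L \colon N^D_*(\cat D) \to N^D_*(\cat D)[W^{-1}]$ induces by post-composition a functor
\[
L_* \colon \Fun_\infty(N^D_*(\cat C), N^D_*(\cat D)) \to \Fun_\infty(N^D_*(\cat C), N^D_*(\cat D)[W^{-1}]).
\]
Since $L$ carries $W$ to equivalences, $L_*$ carries $W_\textup{lvl}$ to pointwise equivalences in the target, so by the universal property of localization \cite[1.3.4.1]{HA} it factors through a canonical comparison
\[
\bar L \colon \Fun_\infty(N^D_*(\cat C), N^D_*(\cat D))[W_\textup{lvl}^{-1}] \to \Fun_\infty(N^D_*(\cat C), N^D_*(\cat D)[W^{-1}]).
\]

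The crux, and the step I expect to be the main obstacle, is showing that $\bar L$ is an equivalence. This amounts to the general $\infty$-categorical statement that post-composition with a localization functor is itself a localization at the pointwise weak equivalences. To verify it, I would use the universal property: for any $\infty$-category $\cat Z$, check that composition with $L_*$ induces an equivalence between $\Fun_\infty(\Fun_\infty(N^D_*(\cat C), N^D_*(\cat D)[W^{-1}]), \cat Z)$ and the full subcategory of $\Fun_\infty(\Fun_\infty(N^D_*(\cat C), N^D_*(\cat D)), \cat Z)$ spanned by functors which invert $W_\textup{lvl}$. Via exponential adjunction in $\infty$-categories, this reduces to the universal property of the original localization $L$. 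This fact is well known for Dwyer--Kan localizations, but some care is required to carry it out in the $\infty$-categorical setting, since the localization is defined abstractly rather than arising from a model structure.
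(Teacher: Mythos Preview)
Your proposal is correct and follows essentially the same route as the paper: both reduce via \cref{prop:dusk N and fun} to the statement that post-composition with the localization $N^D_*(\cat D)\to N^D_*(\cat D)[W^{-1}]$ exhibits $\Fun_\infty(N^D_*(\cat C), N^D_*(\cat D)[W^{-1}])$ as the localization of $\Fun_\infty(N^D_*(\cat C), N^D_*(\cat D))$ at $W_{\textup{lvl}}$, and both appeal to the universal property of Dwyer--Kan localization \cite[1.3.4.1]{HA}. If anything, you are more explicit than the paper about the key step---the paper simply asserts that any functor inverting $W_{\textup{lvl}}$ factors through the functor category with localized target, whereas you correctly identify this as the crux and propose the standard exponential-adjunction verification.
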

\begin{proof}
  By \cref{prop:dusk N and fun}, we have
  \[
  N_*^D\left( [\cat C, \cat D ]_{\ps}\right) \simeq \Fun_{\infty}\left(N^D_*(\cat C), N^D_*(\cat D)  \right).
  \]
The levelwise equivalences on the left side correspond to the equivalences $W$ in $N^D_*(\cat D)$ on the right side.  
Let $\cat D'$ be any $\infty$-category. 
Any functor 
\[
\Fun_{\infty}\left(N^D_*(\cat C), \left(N^D_*(\cat D)\right) \right) \longrightarrow \cat D'
\]
that sends $W$ to equivalences in $\cat D'$ must factor through $\Fun_{\infty}\left(N^D_*(\cat C), \left(N^D_*(\cat D)\right) \left[ W^{-1}\right]\right)$ via the localization $N^D_*(\cat D)\rightarrow N^D_*(\cat D)[W^{-1}]$.
Therefore by the universal property of the Dwyer--Kan localization \cite[1.3.4.1]{HA}, we obtain the desired equivalence. 
\end{proof}

Restricting to additive functors and repeating this argument yields the following corollary.

\begin{corollary}\label{cor:additive fun and nerve of DK localization}
Let $\cat C$ and $\cat D$ be additive $(2,1)$-categories and $W$ a class of weak equivalences in $\cat D$. There is an equivalence of $\infty$-categories\[
N_*^D\left( [\cat C, \cat D]_{\ps}^{\rm add} \right) \left[ W_{\lvl}^{-1}\right]\simeq \Fun^{\rm add}_{\infty}\left(N^D_*(\cat C), \left( N^D_*(\cat D) \right)\left[W^{-1}\right] \right).
\]    
\end{corollary}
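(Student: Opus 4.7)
The plan is to mirror the proof of \cref{lem:fun and nerve of DK localization} while restricting to additive functors throughout. Specifically, \cref{lem:dusk N and additive fun} provides an equivalence of $\infty$-categories
\[
N_*(\Fun^{\rm add}_{\ps}(\cat C, \cat D)) \simeq \Fun^{\rm add}_\infty(N_*^D(\cat C), N_*^D(\cat D))
\]
under which the levelwise weak equivalences $W_\textup{lvl}$ on the left correspond to the levelwise weak equivalences on the right. Consequently, the corollary reduces to establishing the equivalence
\[
\Fun^{\rm add}_\infty(N_*^D(\cat C), N_*^D(\cat D))[W^{-1}_\textup{lvl}] \simeq \Fun^{\rm add}_\infty(N_*^D(\cat C), N_*^D(\cat D)[W^{-1}]).
\]

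To establish this, I would invoke the universal property of Dwyer--Kan localization \cite[1.3.4.1]{HA}. Post-composition with the localization map $L\colon N_*^D(\cat D)\to N_*^D(\cat D)[W^{-1}]$ sends additive $\infty$-functors to additive $\infty$-functors and inverts $W_\textup{lvl}$, yielding a canonical functor
\[
\Fun^{\rm add}_\infty(N_*^D(\cat C), N_*^D(\cat D)) \longrightarrow \Fun^{\rm add}_\infty(N_*^D(\cat C), N_*^D(\cat D)[W^{-1}]).
\]
Verifying that this functor realizes the Dwyer--Kan localization is then the same argument as in the proof of \cref{lem:fun and nerve of DK localization}: any functor out of $\Fun^{\rm add}_\infty(N_*^D(\cat C), N_*^D(\cat D))$ into an $\infty$-category $\cat E$ that inverts $W_\textup{lvl}$ must factor uniquely through the above functor, via the universal property of $L$ at the target.

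The main subtlety to check is that post-composition with $L$ genuinely preserves additivity, which reduces to showing that $L$ preserves finite products. This requires an implicit compatibility between the class $W$ and the additive structure on $N_*^D(\cat D)$; in the applications of this corollary later in the paper (for stable equivalences of permutative or symmetric monoidal categories, and their levelwise analogues for Mackey functors), this holds because the $K$-theory functor and its variants respect finite coproducts. Granted this compatibility, the proof is a direct transcription of the argument for \cref{lem:fun and nerve of DK localization}.
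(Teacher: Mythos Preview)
Your proposal is correct and follows exactly the approach the paper takes: the paper's entire proof is the sentence ``Restricting to additive functors and repeating this argument yields the following corollary,'' and you have spelled out precisely that restriction, invoking \cref{lem:dusk N and additive fun} in place of \cref{prop:dusk N and fun} and rerunning the universal-property argument. Your observation about needing $L$ to preserve finite products is a genuine point that the paper leaves implicit; it is indeed required for post-composition with $L$ to land in additive functors, and as you note it holds in the applications of interest.
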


\begin{example}\label{ex:Mack of localization}
    Let $\cat C$ be an additive $(2,1)$-category and $N_*^D(\cat C)[W^{-1}]$ be the localization of its Duskin nerve with respect to a class of weak equivalences $W$. By \cref{lem:dusk N and additive fun}, if $\cat C$ is a preadditive $(2,1)$-category, then $\Mack_\infty(N_*^D(\cat C))$ is equivalent to the Duskin nerve of $[\mathcal{A}_2^G, \cat C]_{\ps}^{\rm add}$, where the objects are additive pseudo-functors, morphisms are pseudo-natural transformations, and 2-morphisms are modifications. Moreover, there is an equivalence of $\infty$-categories\[
    N_*^D([\mathcal{A}_2^G, \cat C]_{\ps}^{\rm add})[W^{-1}_{\lvl}] \simeq \Mack_\infty(N_*^D(\cat C)[W^{-1}]).
    \]
\end{example}
\begin{example}\label{ex:Mack of 1cat localization}
    Suppose that $\cat C$ is an additive $1$-category, thought of as a $2$-category with only identity $2$-cells. Then $[\mathcal{A}_2^G, \cat{C}]_{\ps}^{\rm add}$ is a $2$-category with only identity $2$-cells since the structure data of a modification in $[\mathcal{A}_2^G,\cat{C}]_{\ps}^{\rm add}$ consists of $2$-cells in $\cat{C}$ which are all identities.  Thus the Duskin nerves in \cref{ex:Mack of localization} are ordinary nerves and we obtain
    \[
      N _*(\Mack(\cat C))[W^{-1}_{\rm lvl}] \simeq \Mack_\infty(N_*(\cat C)[W^{-1}]).
    \] 
\end{example}

We will make use of this example in the case when $\cat C$ is $\Perm$ or $\Perm_2$, and the weak equivalences are $W_K$, the class of stable equivalences (\cref{defn:st equiv in perm}).

\begin{corollary}\label{lem:Mack ps Perm is infty Mack} There are equivalences of $\infty$-categories of Mackey functors
    \[\Mack_{\infty}(N_*(\Perm)[W_K^{-1}]) \simeq \Mack_\infty(N_*(\Sp^{\geq 0})[W^{-1}_{\lvl}])\]
    and hence in particular\[
    N_*(\Mack(\Perm))[(W_{{K}})_{\rm lvl}^{-1}]\simeq \Mack_\infty(N_*(\Sp^{\geq 0})[W^{-1}_{\lvl}]).
    \]
\end{corollary}
\begin{proof}
    This result is an immediate consequence of \cref{thm:Mandell for infty cats} together with \cref{ex:Mack of 1cat localization}.
\end{proof}

\subsection{Spectral Mackey functors}\label{sec:Sp Mack}
Spectral Mackey functors can be thought of as diagrams of ordinary spectra indexed by the subgroups of $G$ and connected by transfer and restriction maps. This definition can be made precise in both the $\infty$-categorical and enriched setting. In $\infty$-categories, one simply considers $\Mack_{\infty}(\Sp)$ and there is an equivalence of $\infty$-categories $\Mack_{\infty}(\Sp)\simeq \Sp_{\infty}^G$ \cite{BarwickOne} (see also \cite[Appendix A]{CMNN}).

In the enriched setting, spectral Mackey functors are constructed as $\Sp$-enriched functors between certain $\Sp$-categories.
Since $\mathcal{A}^G_{\PC}$ is enriched in permutative categories, we may take $K$-theory of the hom-categories to obtain a spectrally enriched version of the Burnside category, which we denote $\mathcal{A}^G_{\rm Sp}$. Following \cite{GuillouMay:2011}, we change enrichment along the (non-symmetric) $K$-theory multifunctor from \cite{GMMO-final}. 

\begin{definition}
    A \textit{spectral Mackey functor} is a spectrally enriched functor $\mathcal{A}^G_{\rm Sp}\to {\rm Sp}$.  Here, $\Sp$ denotes the the closed monoidal category of orthogonal spectra, naturally enriched over itself. We denote the category of spectral Mackey functors by $\Mack_{\Sp}(\Sp):=\Fun_{\rm Sp}(\mathcal{A}^G_{\rm Sp}, \Sp)$. 
\end{definition}

In \cite{GuillouMay:2011}, Guillou--May show that there is a zig-zag of Quillen equivalences between orthogonal $G$-spectra and $\Fun_{\Sp}((\mathcal{A}^G_{\Sp})^{\op}, \Sp)$, a category of \textit{contravariant} Mackey functors. While Guillou--May use contravariant functors in their definition of Mackey functors, other authors (e.g. \cite{BarwickOne}) opt for covariant functors. Morally, the resulting definition should be the same because spans can always be ``turned around,'' and in most settings the Burnside category is indeed self-dual. Although it is not known to be true in the enriched setting (as discussed further in \cref{sec:main}), we show that there is no meaningful difference for the purposes of Mackey functors. In particular, we establish a ``no op'' version of the Guillou--May theorem; we defer the proof to \cref{app:GM thm}, in \cpageref{proof of covariant guillou-may}, as the details are quite technical and not essential to understanding the rest of the paper.

\begin{theorem}\label{thm:GM no op}
    There is a zig-zag of Quillen equivalences between the model categories $\Sp^G$ with the stable model structure and $\Fun_{\Sp}(\mathcal{A}^G_{\Sp},\Sp)$, where the latter is given the projective level model structure.
\end{theorem}

%Before proving this theorem in \cref{app:GM thm}, we discuss how to adapt Bohmann-Osorno's construction to land in $G$-spectra. This is a restatement of \cref{thm:BO}.

As we shall see in the next section, the spectral Mackey functor approach to genuine $G$-spectra is particularly well-suited to constructions involving $K$-theory. In particular, as discussed in the introduction, the covariant Guillou--May theorem above will resolve some technical complications that arise in Bohmann--Osorno's construction \cite{BohmannOsorno:2015}. 

\section{Bohmann--Osorno's \texorpdfstring{$K$}{TEXT}-theory machine, revisited}\label{sec:main}

In \cite{BohmannOsorno:2015}, Bohmann--Osorno construct a functor  \[\mathbb{K}^{BO}\colon \Fun^{\times}_{\PC}((\mathcal A^{\op})_{\PC}, \Perm_{\PC})\to \Fun^{\times}_{\Sp}(K^{\GMMO}_{\bullet}(\mathcal A_{\PC}^{\op}), \Sp)\] by first doing a change of enrichment along the $K$-theory functor $K^{\GMMO}$ from \cite{GMMO-final} and then post-composing with a spectrally enriched version of $K^{\GMMO}$. However, as discussed in \cite[Remark 10.5.5]{johnsonYau:homotopyTheoryOfMackey}, it is unclear whether the codomain of $\mathbb{K}^{\rm BO}$ is Quillen equivalent to the category of genuine orthogonal $G$-spectra. This is because the Guillou--May theorem \cite{GuillouMay:2011} establishes an equivalence\[
\Fun^{\times}_{\Sp}((K_{\bullet}^{\GMMO}(\mathcal{A}_{\PC}))^{\op}, \Sp)\simeq \Sp^G,
\] and duality (taking opposite categories) does not necessarily commute with a change of enrichment along a \textit{non-symmetric} multifunctor, such as $K^{\GMMO}$. 

\begin{remark}
    One might attempt to fix this issue by replacing $K^{\GMMO}$ in Bohmann-Osorno's construction with $K^{\mathrm{EM}}$, the Elmendorf--Mandell $K$-theory functor from \cite{Elmendorf-Mandell:06}, which \textit{is} a symmetric multifunctor. However, to conclude this functor lands in $G$-spectra, one would still need to prove a version of the Guillou--May theorem for $\Fun^{\times}_{\Sp}((K_{\bullet}^{\mathrm{EM}}(\mathcal{A}_{\PC}))^{\op}, \Sp)$. See \cite[Section 10.5]{johnsonYau:homotopyTheoryOfMackey} for details and more discussion.
\end{remark}

The covariant Guillou--May theorem (\cref{thm:GM no op}) allows us to avoid the subtleties surrounding duality and change of enrichment, and we can easily adapt Bohmann--Osorno's construction to an ``un-opped'' version that builds spectral Mackey functors from $\PC$-Mackey functors.

\begin{theorem}[cf. \cite{BohmannOsorno:2015}]\label{thm:BO}
    There is a $K$-theory functor
    \[
        \mathbb{K}\colon\Mack_{\PC}(\Perm_{\PC})\to  \Mack_{\Sp}(\Sp).
    \]
\end{theorem}
\begin{proof}
The argument is the same as in \cite{BohmannOsorno:2015}. First, change enrichment along the multifunctor $K^{\GMMO}\colon \PC\to \Sp$, from $\PC$-enriched categories to spectrally enriched categories. In particular, we can do a change of enrichment to get a spectrally enriched category $\Perm_{\Sp}$, which has the same objects as $\Perm_{\PC}$ but morphism spectra $K^{\GMMO}(\Perm(\cat A, \cat B))$. Thus, there is a functor\[
\Mack_{\PC}({\Perm}_{\PC}) \xrightarrow{K_{\bullet}} \Mack_{\Sp}({\Perm}_{\Sp})
\] by changing enrichment on both $\mathcal{A}^G$ and $\Perm$.
In \cite[Theorem 6.2]{BohmannOsorno:2015}, Bohmann--Osorno show that $K$-theory defines a spectrally enriched functor $\Phi\colon \Perm_{\Sp}\to \Sp$, and so every $\PC$-Mackey functor determines a spectral Mackey functor via\[
    \begin{tikzcd}
        \Mack_{\PC}({\Perm}_{\PC})\ar[r, "K_{\bullet}"]\ar[rd, dashed, swap, "\mathbb{K}"] & \Mack_{\Sp}({\Perm}_{\Sp})\ar[d, "\Phi"] \\
        & \Mack_{\Sp}(\Sp). 
    \end{tikzcd}
    \]\qedhere
\end{proof}

Our previous discussion (particularly \cref{cor:compare ps and PC,lem: compare perm and sym}) gives us a way to extend Bohmann--Osorno's construction to both symmetric monoidal and permutative Mackey functors.

\begin{definition}\label{defn:KT of sm and perm Macks}
    Define the $K$-theory of symmetric monoidal and permutative Mackey functors by the composites 
    \[
\begin{tikzcd}
     \Mack_{\ps}(\Sym_2) \ar[r,"\ell"]   \ar[drr,swap,dashed,"\mathbb{K}"]& \Mack_{\ps}(\Perm_2) \ar[rd, dashed,"\mathbb{K}"] \ar[r, "\Psi\circ j"] &\Mack_{\PC}(\Perm_{\PC}) \ar[d, "\mathbb{K}"] \\
   & &\Mack_{\Sp}(\Sp).
\end{tikzcd}
    \]
\end{definition}

Passing through the ziag-zag of Quillen equivalences
$\Mack_{\Sp}(\Sp)\leftrightarrow \Sp^G$
of \cref{thm:GM no op} exhibits the $K$-theory of permutative Mackey functors as a genuine $G$-spectrum. We now consider some examples.

\begin{example}\label{ex:G-BPQ}
    For any finite $G$-set $X$, let $A_X$ denote the co-representable pseudo-functor $A_X(Y) = \mathcal{A}^G_2(X,Y)$.  We observe in \cref{remark: co-representable functors} that the $\mathbb{K}(A_X)$ corresponds, under the zig-zag of Quillen equivalences in \cref{thm:GM no op}, to the dual of the suspension $G$-spectrum $\mathbb{X} = \Sigma^{\infty}_G(X_+)$.  In particular, in the equivariant stable homotopy category $\mathbb{K}(A_X)$ is isomorphic to the dual of $\mathbb{X}$, which is also isomorphic to $\mathbb{X}$ by \cref{lemma: alpha hat is weak equiv}.
\end{example}

\begin{example}\label{ex:G-susp}
    As a generalization of the previous example, now let $X$ be a pointed $G$-space. In \cite[\S 3]{malkiewich/merling:22}, Malkiewich--Merling construct the genuine suspension spectrum $\Sigma^\infty_G X$ as the equivariant $K$-theory of a certain symmetric monoidal category of ``homotopy discrete'' retractive $G$-spaces. Their construction essentially arises from a pseudo-functor $\mathcal{A}^G_2\to {\rm Wald}$, where the codomain is the category of small Waldhausen categories. They show in Section 3.3 (see also Section 2.3) that this construction factors through symmetric monoidal categories, and applying $\mathbb{K}$ to the resulting symmetric monoidal Mackey functor produces $\Sigma^\infty_G X$.
\end{example}

\begin{example}\label{ex:coefficient ring K theory}
    
    In \cite{CalleChanMejia:Linearization}, the first and second-named authors, together with Mejia, show that there is a $K$-theory construction for \emph{coefficient rings}, an equivariant generalization of rings, that outputs a genuine $G$-spectrum. This construction comes from a pseudo-functor $\mathcal{A}^G_2\to \mathrm{Wald}$, but in fact factors through symmetric monoidal categories. By the results of \cite{BohmannOsorno2020}, applying $\mathbb{K}$ to the resulting symmetric monoidal Mackey functor recovers the $K$-theory of coefficients rings from \cite{CalleChanMejia:Linearization}. A similar construction produces equivariant $K$-theory spectra associated to Green functors, Tambara functors, or even ring $G$-spectra.  
\end{example}

\begin{example}\label{ex:ordinary Mackey functor}
    Let $M\colon \mathcal{A}^G\to \mathrm{Ab}$ be an ordinary Mackey functor.  Viewing $\mathcal{A}^G$ as a $2$-category with only identity morphisms, there is a pseudo-functor $\mathcal{A}^G_2\to \mathcal{A}^G$; pulling back along this functor allows us to view $M$ as a pseudo-functor $\mathcal{A}^G_2\to \mathrm{Ab}$. The $G$-spectrum $\mathbb{K}(A)$ is the Eilenberg--Mac Lane spectrum $\mathrm{H}A$ by a covariant version of \cite[Theorem 8.1]{BohmannOsorno:2015}.
\end{example}

%\subsection*{All connective \texorpdfstring{$G$}{TEXT}-spectra come from \texorpdfstring{$\mathbb{K}$}{TEXT}-theory}

One could hope to leverage Mandell's work to show that the $K$-theory functor of \cref{thm:BO} models all connective genuine $G$-spectra. The immediately appealing way to prove this result would be to apply Mandell's construction levelwise, however, this approach would require the inverse $K$-theory construction to be a multifunctor (to do a change of enrichment) and either spectrally enriched or a $\PC$-functor (to post-compose). 

Mandell's inverse $K$-theory functor factors as\[
K^{-1}\colon \Gamma\text{-sSet} \xrightarrow{S_*} \Gamma\text{-Cat} \xrightarrow{P} \Perm. 
\] where $P$ is a certain Grothendieck construction described in detail in \cite[\S 4]{Mandell/invKT} (see also \cite[\S 5]{JohnsonYau/invKT}) and $S_*$ is induced by the levelwise application the left adjoint of the Quillen equivalence between the Thomason model structure on Cat and the classical model structure on simplicial sets.
Although the functor $P$ is sufficiently multifunctorial, as shown in \cite{JohnsonYau/invKT} (see also \cite{Elmendorf2021}), the entire $K^{-1}$ functor is not, as explained in \cite[\S 1]{JohnsonYau/invKT}. 

By using an $\infty$-categorical framework, we can bypass these issues of multifunctoriality. Both $N_*(\Perm)$ and $\Sp_\infty$ are additive $\infty$-categories since their homotopy categories are canonically equivalent to the homotopy category of connected spectra, which is additive.
Recall that $W_K$ is the class of stable equivalences in $\Perm$ (\cref{defn:st equiv in perm}). The additive structure on $N_*(\Perm)[W_K^{-1}]$ is defined precisely to make $K$ and $K^{-1}$ additive.

\begin{lemma}
The functor $K\colon N_*(\Perm)[W_K^{-1}]\rightarrow \Sp^{\geq 0}_\infty$ and its inverse $K^{-1}$ are additive functors of $\infty$-categories.  That is, they preserve products up to weak equivalence.
\end{lemma}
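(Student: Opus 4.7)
The plan is to deduce this lemma as a formal consequence of the Thomason--Mandell equivalence from \cref{thm:Mandell for infty cats}, together with the fact that equivalences of $\infty$-categories preserve all limits and colimits that exist.

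First, I would note that $\Sp^{\geq 0}_\infty$ is additive. Its homotopy category is canonically equivalent to the full subcategory $h\Sp_{\geq 0}$ of the triangulated category $h\Sp$ spanned by connective spectra. Since $h\Sp$ is additive and the inclusion of connective spectra is closed under finite products and coproducts, $h\Sp_{\geq 0}$ is also additive, and the canonical map $X \sqcup Y \to X \times Y$ in $\Sp^{\geq 0}_\infty$ is an equivalence. In particular, $\Sp^{\geq 0}_\infty$ has finite products.

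Next, I would invoke \cref{thm:Mandell for infty cats}, which exhibits $K$ as an equivalence of $\infty$-categories between $N_*(\Perm)[W_K^{-1}]$ and $\Sp^{\geq 0}_\infty$, with inverse $K^{-1}$. Any equivalence of $\infty$-categories preserves every limit and colimit that exists in the source (see, e.g., \cite[Proposition 5.2.2.9]{HTT}). Applying this to $K^{-1}$ together with the existence of finite products in $\Sp^{\geq 0}_\infty$ shows that $N_*(\Perm)[W_K^{-1}]$ admits finite products, computed as the image of products in $\Sp^{\geq 0}_\infty$ under $K^{-1}$. Dually, $K$ preserves these products. Hence both functors are additive.

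There is no real obstacle here; the only conceptual subtlety is confirming that the additive structure on $N_*(\Perm)[W_K^{-1}]$ is the expected one. This is tautological given the previous paragraph: products in the localization exist precisely because they can be transported across the equivalence, and by construction they are preserved by $K$ and $K^{-1}$. As a sanity check, one can verify concretely that the Cartesian product of permutative categories descends to the localization and represents this abstract product, since $K$ sends a Cartesian product of permutative categories to the product of their $K$-theory spectra (a classical fact about Segal's machine).
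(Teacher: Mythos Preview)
Your argument is correct but takes a different route from the paper's. You deduce the lemma formally from \cref{thm:Mandell for infty cats}: since $K$ is already known to be an equivalence of $\infty$-categories, it and its inverse preserve all limits and colimits, hence in particular finite products. The paper instead argues at the point-set level: it cites \cite[1.3.4.23]{HA} to identify products in the localizations with products in the underlying $1$-categories $\Perm$ and $\Gamma\text{-sSet}$, uses preadditivity of those categories to conflate products and coproducts, and then appeals to the fact that $K$ and $K^{-1}$ form an adjoint pair (before localizing) to conclude that one preserves products and the other coproducts.

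Your approach is shorter and entirely formal, and there is no circularity since the proof of \cref{thm:Mandell for infty cats} does not use additivity. The paper's approach, on the other hand, yields the concrete statement that products in $N_*(\Perm)[W_K^{-1}]$ are computed by Cartesian products of permutative categories---information that is tacitly used in later arguments (for instance in checking that levelwise constructions behave well). You recover this as your ``sanity check,'' but in your framing it is an optional afterthought rather than the content of the proof.
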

\begin{proof}
    Products in these $\infty$-categories can be computed, up to weak equivalence, by products in the categories $\Perm$ and $\Gamma\text{-sSet}$ by \cite[1.3.4.23]{HA}.  These categories are pre-additive, and so product and coproducts agree and it suffices to show $K$ and $K^{-1}$ preserve either products or coproducts.  This is immediate since $K$ and $K^{-1}$ participate in an adjunction (c.f. \cite{Mandell/invKT}).
\end{proof}

By functoriality of $\Mack_\infty(-)$ (see \cref{definition: infinity Mack functors})  and the equivalence $\Mack_\infty(\Sp_\infty)\simeq \Sp_\infty^G$ (c.f. \cite[Appendix A]{CMNN}), we obtain an equivariant $\infty$-categorical version of Thomason's result. Recall that if $W$ is a class of weak equivalences, then $W_{\rm lvl}$ denotes the morphisms of Mackey functors which are valued in $W$ levelwise.

\begin{corollary}\label{cor:Mack Mandell}
    By \cref{thm:Mandell for infty cats}, the functors $K$ and $K^{-1}$ induce an equivalence of $\infty$-categories
    \[
        \begin{tikzcd}
         \Mack_\infty(N_*({\Perm}))[(W_K)_{\rm lvl}^{-1}] \ar[shift left=2]{r}{\Mack_{\infty}(K)}[swap]{\simeq}    & [2em]\Mack_{\infty}(\Sp^{\geq 0}_{\infty}) \simeq (\Sp^{G}_{\infty})^{\geq 0} \ar[l,shift left=2, "\Mack_{\infty}(K^{-1})"]
        \end{tikzcd}
    \]
    and moreover \[\Mack_\infty(N_*({\Sym}))[(W_K)_{\rm lvl}^{-1}]\simeq \Mack_\infty(N_*({\Perm}))[(W_K)_{\rm lvl}^{-1}].\]
    % \[
    %     \begin{tikzcd}[column sep = large]
    %     \Mack_\infty(N_*({\Sym}))[(W_K)_{\rm lvl}^{-1}]\simeq \Mack_\infty(N_*({\Perm}))[(W_K)_{\rm lvl}^{-1}] \ar[r,shift left, "\Mack_{\infty}(K)"]    & \Mack_{\infty}(\Sp^{\geq 0}_{\infty})\simeq (\Sp^{G}_{\infty})^{\geq 0} \ar[l,shift left, "\Mack_{\infty}(K^{-1})"]
    %     \end{tikzcd}.
    % \]
    Similarly, by \cref{cor:equiv smc and sp}, there is an equivalence of $\infty$-categories
\[
        \begin{tikzcd}[column sep = large]
        \Mack_\infty(\smc)[(W_\infty)^{-1}_{\rm lvl}] \ar[shift left=2]{r}[swap]{\simeq} & (\Sp^{G}_{\infty})^{\geq 0} \ar[l,shift left=2]
        \end{tikzcd}.
    \]  
\end{corollary}

Our main theorem is that the equivalence of $\infty$-categories from \cref{cor:Mack Mandell} is suitably compatible with Bohmann--Osorno's $K$-theory machine. We use $W$ to denote stable equivalences and note that $\mathbb{K}((W_{{K}})_{\rm lvl})\subseteq W_{\mathrm{lvl}}$ by definition.

   \begin{theorem}\label{thm:main}
    The following diagram of $\infty$-categories commutes:
    \[
    \begin{tikzcd}
    N_*(\Mack_{\PC}(\Perm_{\PC}))[(W_K)_{\rm lvl}^{-1}] \ar[r, "N_*\mathbb{K}"] & N_*(\Mack_{\Sp}(\Sp))[W_{\mathrm{lvl}}^{-1}] \ar[dd, swap,"\sim"']  \\\
 N_*(\Mack_{\ps}(\Perm_2))[(W_K)_{\rm lvl}^{-1}] \ar[u, "\Psi\circ j"] &    \\
N_*(\Mack(\Perm))[(W_K)_{\rm lvl}^{-1}] \ar[u, "\iota"] \ar[d, swap, "\sim"] & \Sp^G_\infty\\
\Mack_\infty(N_*(\Perm)[W_K^{-1}]) \ar[r, swap, "\Mack_\infty(K)"] & \Mack_\infty(\Sp_\infty) \ar[u, swap, "\sim"]
    \end{tikzcd},
    \]  where 
    \begin{enumerate}
        \item the functor $\Psi \circ j$ is map induced on localizations by the composition of the functors of \cref{cor:compare ps and PC} and \cref{prop: strict to PC};
        \item the functor $\iota$ is induced by the inclusion $\Mack(\Perm)\to \Mack_{\ps}(\Perm_2)$;
        \item the functors $\mathbb{K}$ and $\Mack_{\infty}(K)$ are the $K$-theory functors of  \cref{thm:BO} and \cref{cor:Mack Mandell}, respectively; and
        \item the left vertical arrow labeled $\sim$ is the equivalence of \cref{ex:Mack of 1cat localization},
        \item the right vertical arrows are the equivalences of $\infty$-categories from \cref{thm:GM no op} and \cite[Theorem A.1]{CMNN}.
    \end{enumerate}
\end{theorem}
\begin{proof}
The key point of the argument is a comparison between the enriched and $\infty$-categorical constructions of spectral Mackey functors. To reduce to this comparison, we observe that the bottom row of the diagram factors as \[
\begin{tikzpicture}[baseline= (a).base]
\node[scale=0.95] (a) at (1,1){
\begin{tikzcd}
    N_*\Mack(\Perm)[(W_K)_{\rm lvl}^{-1}] \ar[r, "\sim"] & \Mack_\infty(N_*(\Perm)[W_K^{-1}]) \ar[dr,swap, "\Mack_{\infty}(K)"] \ar[r, "\sim"] & \Mack_\infty(N_*(\Sp^{\geq 0})[W^{-1}]) \ar[d, hook] \\
    && \Mack_{\infty}(\Sp_{\infty}),
    \end{tikzcd}
    };
    \end{tikzpicture}
    \] where the second equivalence is from \cref{lem:Mack ps Perm is infty Mack}. By the universal property of the localization, there is a functor\[
    \Phi\colon \Mack_\infty(N_*(\Sp^{\geq 0})[W^{-1}])\to N_*\Mack_{\Sp}(\Sp^{\geq 0})[W^{-1}_{\rm lvl}]
    \] that fits into the original diagram as
    \[
    \begin{tikzpicture}[baseline= (a).base]
\node[scale=0.90] (a) at (1,1){
    \begin{tikzcd}
 N_*(\Mack_{\PC}(\Perm_{\PC}))[(W_K)_{\rm lvl}^{-1}] \ar[r, "N_*\mathbb{K}"] & N_*(\Mack_{\Sp}(\Sp^{\geq 0}))[W^{-1}_{\rm lvl}]\ar[r] & N_*(\Mack_{\Sp}(\Sp))[W_{\mathrm{lvl}}^{-1}] \ar[d, swap,"\sim"'] \\
N_*(\Mack(\Perm))[(W_K)_{\rm lvl}^{-1}] \ar[u, "\Psi\circ j\circ \iota"]  \ar[d, "\sim"'] && \Sp^G_\infty\\
\Mack_\infty(N_*(\Perm)[W_K^{-1}]) \ar[r, swap, "\Mack_\infty(K)"] & \Mack_{\infty}(N_*(\Sp^{\geq 0})[W^{-1})) \ar[uu, dashed, "\Phi"] \ar[r, hook] & \Mack_\infty(\Sp_\infty), \ar[u, swap, "\sim"]
    \end{tikzcd}
    };
    \end{tikzpicture}
    \] so that the left part of the diagram commutes; hence it suffices to show that the right part commutes.
    Using the definitions of all functors involved, we compute that on objects, $\Phi$ takes a functor 
    \[
        F\colon \mathcal{A}^G_{\rm eff}\to N_*(\Sp^{\geq 0})[W^{-1}]
    \]
    to a functor $\Phi(F)$ given by
    \[
        \Phi(F)(X) = K(j(K^{-1}(F(X))))
    \]
    where $j$ is a replacement of the permutative category $K^{-1}(F(X))$ which is naturally equivalent to $F(X)$, as a permutative category.  Thus on objects $\Phi(F)$ and $F$ take values in spectra which are equivalent levelwise, and these equivalences are natural.  Since colimits in functor categories are computed levelwise, it follows that $\Phi$ preserves colimits.

    It suffices to show that the diagram
    \[
        \begin{tikzcd}
            \Mack_\infty(N_*(\Sp^{\geq 0})[W^{-1}])  \ar[r,"\Phi"] \ar[d] & N_*\Mack_{\Sp}(\Sp^{\geq 0})[W_{\rm lvl}^{-1}] \ar[d] \\
            \Mack_\infty(N_*(\Sp)[W^{-1}]) \ar[dr, swap, "L^{-1}"] & N_*\Mack_{\Sp}(\Sp)[W_{\rm lvl}^{-1}] \ar[d,"GM"] \\
             & \Sp^G_{\infty}
        \end{tikzcd}
    \] commutes,
    where $GM$ is the equivalence of categories from \cref{thm:GM no op}, and $L^{-1}$ is an inverse to the functor induced on localizations by \cite[Theorem A.1]{CMNN}.  Every arrow in the above diagram preserves colimits, and so the diagram will commute, up to natural isomorphism, as long as we can show that the two ways of going around compute the same value, up to natural isomorphism, on the representable functors $A_{X}(Y) = N_*^D(\mathcal{A}^G_2)(X,Y)$ which generate the top-left category under colimits. To see that representables generate, note that by \cref{cor:Mack Mandell} the top-left category is equivalent to the $\infty$-category of connective $G$-spectra, which are generated under colimits by the suspensions of finite $G$-sets.  Under the equivalence of \cite[Theorem A.1]{CMNN}, the suspensions of finite $G$-sets correspond to the representable functors.
    
    By the two relevant formulations of the equivariant Barratt--Priddy--Quillen theorem, \cite[Theorem A.1]{CMNN} and  \cref{ex:G-BPQ}, (see also \cite[Theorem 9.4]{BohmannOsorno:2015}), the two $K$-theory constructions send $A_X$ to the spectral Mackey functors representing $\Sigma^{\infty}_{G}(X)$, the equivariant suspension spectrum of $X$, and its dual, respectively.  Since these objects are naturally isomorphic in the homotopy category, by \cref{proposition: wirthmuller iso}, these composite agree up to a natural equivalence on the representable functors and thus the diagram of $\infty$-categories commutes.
    \end{proof}

One might hope that the functors $\iota$, $j$, and $\Psi$ in \cref{thm:main} induce equivalences on homotopy categories, but this is only clear for $j$ (\cref{cor:compare ps and PC}, \cref{rmk:psi not ess surj}). However, since $\iota$ and $\Psi$ are at least fully faithful, we still obtain the desired result when restricting $\mathbb{K}$ to the image of $\Psi\circ j\circ \iota$ in $\PC$-Mackey functors.

\begin{corollary}\label{cor:str becomes equiv after loc}
    The functor $\mathbb{K}$ induces equivalences of categories
    \[\Mack(\Sym)[(W_{{K}})^{-1}_{\rm lvl}]\simeq \Mack(\Perm)[(W_{{K}})_{\rm lvl}^{-1}]\simeq h\Sp^G_{\geq 0}. \]
\end{corollary}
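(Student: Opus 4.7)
The plan is to deduce this corollary in two steps, both leveraging results already established in the paper. First, I would establish the equivalence $\Mack_{\ps}(\Sym_2)[W_{\mathbb{K}}^{-1}] \simeq \Mack_{\ps}(\Perm_2)[W_{\mathbb{K}}^{-1}]$ using the adjoint pair $(k,\ell)$ of \cref{lem: compare perm and sym}. The key observation is that the counit-like natural transformation $\epsilon\colon \ell\circ k \Rightarrow 1$ has components that are pseudo-natural transformations whose component functors are fully faithful and essentially surjective, hence categorical equivalences of symmetric monoidal categories. Since $K$-theory sends categorical equivalences to stable equivalences of spectra, these components lie in $W_{\mathbb{K}}$ and so $\epsilon$ becomes an isomorphism after localization. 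Combined with the natural isomorphism $k\circ \ell \cong 1$ already noted in \cref{lem: compare perm and sym}, this shows $k$ and $\ell$ descend to mutually inverse equivalences of $1$-categorical localizations.

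Second, I would deduce $\Mack_{\ps}(\Perm_2)[W_{\mathbb{K}}^{-1}] \simeq h\Sp^G_{\geq 0}$ by extracting homotopy categories from \cref{thm:main}. That theorem exhibits an equivalence of $\infty$-categories
\[
N_*\Mack_{\ps}(\Perm_2)[W_{\mathbb{K}}^{-1}] \simeq \Mack_\infty(N_*(\Perm)[W_K^{-1}]) \simeq (\Sp^G_\infty)^{\geq 0},
\]
where the second equivalence is \cref{cor:Mack Mandell}. Applying the homotopy category functor $h(-)$ to this chain of equivalences, and using the standard fact that for a $1$-category $\cat C$ equipped with a class $W$ of morphisms, the homotopy category of the $\infty$-categorical Dwyer--Kan localization $N_*(\cat C)[W^{-1}]$ agrees with the ordinary categorical localization $\cat C[W^{-1}]$, yields the desired equivalence $\Mack_{\ps}(\Perm_2)[W_{\mathbb{K}}^{-1}] \simeq h\Sp^G_{\geq 0}$.

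The main obstacle — really the only point needing care — is justifying that the levelwise categorical equivalences produced by $\epsilon$ in \cref{lem: compare perm and sym} are indeed contained in $W_{\mathbb{K}}$. This reduces to the classical statement that categorical equivalences of symmetric monoidal (or permutative) categories induce weak equivalences on Segal/May $K$-theory, which is implicit in the Thomason--Mandell story recalled in \cref{sec:infty inv KT} (see also \cite{Isbell} for the strictification). Once this is verified, \cref{remark about strictification nonsense} promotes the ``natural-transformation-with-equivalence-components'' data of \cref{lem: compare perm and sym} into genuine natural isomorphisms in the localized category, and the corollary is a formal consequence of chaining \cref{lem: compare perm and sym} with \cref{thm:main}.
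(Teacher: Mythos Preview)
Your proposal is correct and follows essentially the same route as the paper, which dispatches the corollary in one line by citing \cref{thm:main} together with \cref{remark about strictification nonsense}; you have simply unpacked that one line into its two constituent steps (the $(k,\ell)$ comparison of \cref{lem: compare perm and sym} for the first equivalence, and passing to homotopy categories in the chain from \cref{lem:Mack ps Perm is infty Mack} and \cref{cor:Mack Mandell} for the second). Two minor terminological slips to clean up: the paper does not claim $(k,\ell)$ form an adjoint pair, and \cref{lem: compare perm and sym} only asserts that $k\circ\ell$ is \emph{naturally equivalent} to the identity (i.e.\ via a transformation with categorical-equivalence components), not naturally isomorphic---but this is exactly what becomes an isomorphism after localizing at $W_{\mathbb{K}}$, so your argument goes through unchanged.
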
\begin{proof}
    We view $\Sym\subseteq \Sym_2$ and $\Perm\subseteq \Perm_2$ as the inclusions of subcategories with only identity $2$-cells. These inclusions induce fully faithful inclusions $\Mack(\Sym)\hookrightarrow \Mack_{\ps}(\Sym_2)$ and $\Mack(\Perm)\hookrightarrow \Mack_{\ps}(\Perm_2)$ compatible with the definition of $\mathbb{K}$ (\cref{defn:KT of sm and perm Macks}). By \cref{thm:main}, we obtain a commutative diagram\[
    \begin{tikzcd}
        N_*(\Mack(\Sym))[W_{\rm lvl}^{-1}] \ar[r, "N_*\ell"]\ar[d,swap,  "\sim"] & N_*(\Mack(\Perm))[W_{\rm lvl}^{-1}] \ar[r, "N_*\mathbb{K}"] \ar[d, "\sim"] & \Sp_\infty^G
        \\
        \Mack_{\infty}(N_*(\Sym)[W_K^{-1}]) \ar[r, swap, "\sim"] & \Mack_{\infty}(N_*(\Perm)[W_K^{-1}]) \ar[ur, swap, "\Mack_{\infty}(K)"] &  
    \end{tikzcd}
    \] where the vertical arrows are equivalences by \cref{ex:Mack of 1cat localization}, $\ell$ is the strictification functor from \cref{lem: compare perm and sym} which becomes an equivalence after localization, and the bottom horizontal arrow is an equivalence by \cref{cor:Mack Mandell}. Finally, since $\Mack_\infty(K)$ induces an equivalence on the subcategory of connective $G$-spectra, $N_*\mathbb{K}$ does as well. The result then follows by taking homotopy categories.
\end{proof}

In particular, as a corollary, we obtain the desired result that Bohmann-Osorno's $K$-theory machine is essentially surjective.

\begin{corollary}\label{cor:BO is all}
For any connective $G$-spectrum $E$, there exists a symmetric monoidal Mackey functor $F\in \Mack_{\ps}(\Sym)$ so that $\mathbb{K}(F)$ is isomorphic to $E$ in $h\Sp^G_{\geq 0}$. Consequently, the same statement holds with $F\in \Mack_{\PC}(\Perm_{\PC})$.
\end{corollary}\begin{proof}
    The first statement follows directly from \cref{cor:str becomes equiv after loc}. As the definition of $\mathbb{K}(F)$ first strictifies the symmetric monoidal Mackey functor $F$ to a $\PC$-Mackey functor, this implies the second statement.
\end{proof}

%The above theorem, \cref{lem: compare perm and sym}, and \cref{remark about strictification nonsense} give the following corollary.

%\begin{corollary}\label{cor:str becomes equiv after loc}
%The functor $\mathbb{K}$ induces equivalences of categories
%    \[\Mack_{\ps}(\Sym_2)[(W_{{K}})^{-1}_{\rm lvl}]\simeq \Mack_{\ps}(\Perm_2)[(W_{{K}})_{\rm lvl}^{-1}]\simeq h\Sp^G_{\geq 0}. \]
%\end{corollary}

%Thus, by \cref{cor:compare ps and PC}, we have shown that Bohmann--Osorno's $K$-theory machine models all genuine $G$-spectra.\mcnote{technically the cited corollary doesn't prove the claim below for SMC}

%\begin{corollary}\label{cor:BO is all}
%For any connective $G$-spectrum $E$, there exists a symmetric monoidal Mackey functor $F\in \Mack_{\ps}(\Sym_2)$ so that $\mathbb{K}(F)$ is isomorphic to $E$ in $h\Sp^G_{\geq 0}$.
%\end{corollary}

%As the definition of $\mathbb{K}(F)$ first strictifies $F$, this corollary also says that every connective $G$-spectrum can be modeled by the $K$-theory of one of Bohmann--Osorno's $\PC$-Mackey functors.
A reasonable question to ask is whether this equivariant $K$-theory machine preserves multiplicative structure. 
Although the functor $K\colon \smc\rightarrow \Sp^{\geq 0}_\infty$ is lax symmetric monoidal, the equivariant case is more subtle.
It is true that the induced functor
\[
\Mack_\infty(\smc)\rightarrow \Mack_\infty(\Sp^{\geq 0})
\]
remains lax symmetric monoidal, however, the equivalence
$
\Mack_\infty(\Sp)\simeq \Sp^G
$ 
is \emph{not} lax symmetric monoidal, which we can see using the perspective of pseudo-functors.
The category of pseudo-functors admits a monoidal structure, given by Day convolution. If $K$-theory preserved multiplicative structure, it would send commutative monoids with respect to this structure to equivariant ring spectra. However, as discussed in the next remark, this is not the case.

\begin{remark}\label{remark: green not tambara}
The inclusion of ordinary Mackey functors into permutative Mackey functors preserves Day convolution monoids.  A monoid with respect to the Day convolution in Mackey functors is known as a Green functor, and by \cref{ex:ordinary Mackey functor}, every Green functor $A$ is realized as $\pi_0(\mathbb{K}(A))$.  Work of Brun \cite{Brun} shows that only a certain class of Green functors, known as Tambara functors, can appear as $\pi_0$ of genuine equivariant ring spectra.  Since there are many example of Green functors which are not Tambara functors, we see that $\mathbb{K}$ cannot possibly send all monoids with respect to the Day convolution to spectra which are stably equivalent to genuine equivariant ring spectra. 
\end{remark}

In future work, we intend to investigate which monoids will be sent to ring spectra by $\mathbb{K}$. The expectation is that this work will require us to consider how $K$-theory interacts with the ``$G$-commutative monoids'' of \cite{HillHopkins}.  In Mackey functors, work of Hoyer \cite{Hoyer}, Mazur \cite{Mazur}, and the second-named author \cite{Chan:tamb} shows that such objects recover the notion of Tambara functors.  We note that comparison of $G$-commutative monoids (also called normed algebras), Tambara functors, and equivariant ring spectra appears in recent work of Cnossen--Haugseng--Lenz--Linskens and Lenz--Linskens--P\"utst\"uck \cite{CHLL,LLP}.  Multiplicative approaches to equivariant $K$-theory which do not use spectral Mackey functors appear in work of Yau and Guillou--May--Merling--Osorno \cite{Yau:multiplicativeKtheory,GMMO-final}.
\appendix

\section{The Covariant Guillou--May Theorem}\label{app:GM thm}

This appendix contains the proof of \cref{thm:GM no op}, using the results of \cite{Schwede-Shipley} and \cite{GuillouMay:2011} to produce a zig-zag of Quillen equivalences between orthogonal $G$-spectra and covariant spectral Mackey functors.
For the readers' convenience, we gather in one place the essential points regrading the model category structure on orthogonal $G$-spectra which we will need.  
\begin{theorem}[{\cite[Theorem III.4.2, Propositions V.3.4 and V.3.8]{MM}}]
    The category $\Sp^G$ of orthogonal $G$-spectra indexed on a complete universe with the stable model structure in which it is a closed symmetric monoidal model category. The weak equivalences are called $G$-stable equivalences. Additionally, it satisfies the following properties.
    \begin{enumerate}
        \item $\Sp^G$ is compactly generated, stable, and proper.
        \item There is a strong monoidal suspension spectrum functor $\Sigma^{\infty}_G$ from pointed $G$-spaces to $\Sp^G$.  If $A$ is a finite $G$-set then $\mathbb{A} = \Sigma^{\infty}_G(A_+)$ is cofibrant in the stable model structure.
        \item The sphere spectrum $\bS = \Sigma^{\infty}_G(S^0)$ is the unit for the monoidal smash product $\wedge$ on $\Sp^G$ and is cofibrant.
        \item There is a Quillen adjunction
        \[
            \begin{tikzcd}
                \Sp \ar[bend left]{r}{\mathrm{triv}}\ar[phantom, "\perp" description, xshift=-0.5ex]{r} &  [2.5em]\ar[bend left]{l}{(-)^G}\Sp^G 
            \end{tikzcd}
            %\ar[r,shift left,"\mathrm{triv}"]
            %\ar[l,shift left,"(-)^G"]
        \]
        where $\Sp$ is the category of orthogonal spectra with the stable model structure.  The left adjoint $\mathrm{triv}$ is strong monoidal, hence the right adjoint $(-)^G$ is lax monoidal.
        \item We write $F(X,Y)\in \Sp^G$ for the internal mapping $G$-spectrum of two orthogonal $G$-spectra $X$ and $Y$. The category $\Sp^G$ obtains an $\Sp$-enrichment with mapping spectra between $X$ and $Y$ given by $F(X,Y)^G$.
    \end{enumerate}
\end{theorem}

In the next subsection, we recall the basic definitions of spectral $G$-categories and the methods we will use to produce our desired zig-zag of Quillen equivalences. The reader familiar with these ideas may proceed directly to \cref{subsec:pf of GM}.

\subsection{Spectral \texorpdfstring{$G$}{TEXT}-categories and \texorpdfstring{$G$}{TEXT}-quasi-equivalences}

The following definitions are adaptations of \cite[Appendix A]{Schwede-Shipley} to the context of orthogonal spectra, see also \cite{Guillou--May:enrichedModelCats} and \cite{DuggerEnr}.

\begin{definition}
A \textit{$G$-spectral category} $\cat R$ is a small category enriched in the closed symmetric monoidal category $\Sp^G$ of orthogonal $G$-spectra.
In particular, for pair of objects $r, s$ in $\cat R$, there is a morphism orthogonal $G$-spectrum $\cat R(r,s)$, together with a unit map of $G$-spectra $\bS\to \cat R(r,r)$ for all objects $r$, and an associative and unital composition map of $G$-spectra
\[
\circ\colon \cat R (s,t) \wedge \cat R(r,s)\longrightarrow \cat R (r,t)
\]
for any triple of objects $r,s$ and $t$ in $\cat R$.
\end{definition}

In particular, the data of a ring orthogonal $G$-spectrum $R$ is precisely the data of a spectral category $\cC_R$ with one object $*$ and hom $G$-spectrum $\cC_R(*,*)=R$.

\begin{definition}
    A \textit{morphism $\Phi\colon \cR\to \cO$ of $G$-spectral categories} is an $\Sp^G$-enriched functor i.e., it is a map of sets $\Phi\colon \ob(\cR)\to \ob(\cO)$ together with maps of $G$-spectra
    \[
    \Phi_{r,s}\colon \cR(r, s)\longrightarrow \cO(\Phi(r), \Phi(s))
    \]
    that preserve units and respect composition. We say $\Phi\colon \cR\to \cO$ is a \textit{stable $G$-equivalence} if $\Phi$ induces a bijection on objects, and if for all object $r, s$ in $\cR$, the map $\Phi_{r,s}$ is a stable $G$-equivalence of orthogonal spectra.
    A \textit{zig-zag of stable $G$-equivalences} between $\cR$ and $\cO$ consists of stable equivalences between spectral categories connecting $\cR$ and $\cO$:
    \[
    \begin{tikzcd}
        \cR & \cat E_1 \ar{l} \ar{r} & \cat E_2' & \cat E_2 \ar{l}\ar{r} & \cdots & \cat E_{n-1} \ar{l} \ar{r} & \cat E_{n}' &\cat E_n  \ar{l}\ar{r} & \cO.
    \end{tikzcd}
    \]
\end{definition}

\begin{definition}
    Given a $G$-spectral category $\cR$, the \emph{opposite category} $\cR^\op$ is the $G$-spectral category with the same objects as in $\cR$, with hom $G$-spectrum given by $\cR^\op(r,s)=\cR(s,r)$ for all object $r,s$ in $\cR$.
    A $G$-spectral morphism $\cR \to \cO$ determines a $G$-spectral morphism $\cR^\op\to \cO^\op$.
\end{definition}

\begin{definition}
 Let $\cR$ and $\cO$ be $G$-spectral categories. 
 Define $\cR\wedge \cO$ to be the $G$-spectral category which has objects $\ob(\cR)\times \ob(\cO)$, and hom $G$-spectra
 \[
    (\cR\wedge \cO)((r, o), (r', o'))=\cR(r,r')\wedge \cO(o,o')
 \] 
 for $r$ and $r'$ in $\cR$ and $o$ and $o'$ in $\cO$.
Spectral morphisms $\Phi\colon \cR'\to \cR$ and $\Psi\colon \cO'\to \cO$ determine a spectral morphism $\Phi\wedge \Psi \colon \cR'\wedge \cO' \to \cR\wedge \cO$.
\end{definition}

\begin{definition}
    Given $G$-spectral categories $\cat R$ and $\cat O$, an \textit{$(\cat R, \cat O)$-bimodule} is a $G$-spectrally enriched functor $\cat M\colon {\cat O}^{\op}\wedge \cat R\to \Sp^G$.
    In particular, it consists of a collection of orthogonal $G$-spectra $\cM(o, r)$ for all objects $r\in \cR$ and $o\in \cO$, together with actions:
    \[
    \cR(r,r')\wedge \cM(o,r) \wedge \cO(o', o) \longrightarrow \cM(o',r')
    \]
    for all objects $r,r'\in \cR$ and $o, o'\in \cO$, such that the actions are suitably associative and unital with respect to the spectral-enriched structures on $\cR$ and $\cO$.
    Viewing the $G$-sphere spectrum $\bS$ as a $G$-spectral category $\cC_\bS$, a $(\cR, \cC_\bS)$-bimodule shall be referred as a \textit{left $\cR$-module}, while a $(\cC_\bS, \cO)$-bimodule is referred as a \textit{right $\cO$-module}.
    Of course a $(\cR, \cO)$-bimodule is in particular a left $\cR$-module and a right $\cO$-module.
\end{definition}

\begin{remark}\label{remark: restriction of scalars for spectral categories}
    Given morphism of $G$-spectral categories $\Phi\colon \cR' \to \cR$  and $\Psi\colon \cO'\to \cO$, then any $(\cR, \cO)$-bimodule $\cM$ defines a $(\cR', \cO')$-bimodule by restriction of scalars, i.e., via precomposition:
    \[
    \begin{tikzcd}
        (\cO')^\op\wedge \cR' \ar{r}{\Psi^\op \wedge \Phi} &[1em] \cO^\op \wedge \cR \ar{r}{\cM} & \Sp^G.
    \end{tikzcd}
    \]
\end{remark}

\begin{example}\label{example: Sp^G as a bimodule}
    As $\Sp^G$ is closed monoidal, then it can be regarded as a $(\Sp^G, \Sp^G)$-bimodule via its internal hom:
    \[
    \begin{tikzcd}
        (\Sp^G)^\op\wedge \Sp^G \ar{r}{F(-,-)} & [1em] \Sp^G.
    \end{tikzcd}
    \]
    The monoidal product in $\Sp^G$ instead turns into a $(\Sp^G\wedge \Sp^G)$-left module structure, or said differently, a $(\Sp^G, (\Sp^G)^\op)$-bimodule:
    \[
    \begin{tikzcd}
         \Sp^G\wedge \Sp^G \ar{r}{-\wedge-} & \Sp^G. 
    \end{tikzcd}
    \]
\end{example}

\begin{definition}
    Let $\cR$ and $\cO$ be two $G$-spectral categories with the same set of objects $I$.
    A \emph{$G$-quasi-equivalence} between $\cR$ and $\cO$ is an $(\cR, \cO)$-bimodule $\cM$ together with morphisms $\varphi_i\colon \bS\to \cM(i,i)$ for all $i\in I$, such that for all pairs $i$ and $j$ of objects, the following composites are stable $G$-equivalences:
    \[
    \begin{tikzcd}
        \cR(i,j)\simeq \cR(i,j)\wedge \bS \ar{r}{1\wedge \varphi_i} & \cR(i,j)\wedge \cM(i,i) \ar{r} & \cM(i,j),
    \end{tikzcd}
    \]
    \[
    \begin{tikzcd}
        \cO(i,j)\simeq \bS\wedge \cO(i,j) \ar{r}{\varphi_j\wedge 1} & \cM(j,j)\wedge \cO(i,j) \ar{r} & \cM(i,j),
    \end{tikzcd}
    \]
    where the unlabeled arrows are given by the left $\cR$-action and right $\cO$-action.
\end{definition}

\begin{proposition}[{\cite[A.2.3]{Schwede-Shipley}, \cite[2.10]{Guillou--May:enrichedModelCats}}]\label{proposition: quasi equiv gives weak equiv zigzag}
Let $\cR$ and $\cO$ be two $G$-spectral categories with the same set of objects. If a $G$-quasi-equivalence exists between $\cR$ and $\cO$, then there is a zig-zag of stable $G$-equivalences between $\cR$ and $\cO$.
\end{proposition}

If $G$ is trivial, we shall refer to previous notions as spectral categories, morphisms of spectral categories, stable equivalence of spectral categories, bimodules over spectral categories and quasi-equivalence of spectral categories. 

Suppose that $\cat{V}$ and $\cat{W}$ are closed monoidal categories, and $F\colon \cat{V}\to\cat{W}$ is a lax monoidal functor.  If $\cat{C}$ is enriched in $\cat{V}$, there is a $\cat{W}$-enriched category $F_*(\cat{C})$ with same objects and hom $\cat W$-object given by
\[
    F_*(\cat{C})(x,y) = F(\cat{C}(x,y)).
\]
Composition in $F_*(\cat{C})$ comes from applying $F$ to the composition in the $\cat{V}$-category $\cat{C}$, and using the lax monoidality of $F$.  This construction is functorial in $\cat{V}$-enriched functors.  %for details see proposition 6.3 of Eilenberg--Kelly.

We apply this construction in the case where $F$ is the fixed point functor $(-)^G\colon \Sp^G\to \Sp$.  Thus every $G$-spectral category determines a spectral category in a functorial way.  In nice cases, this construction sends stable $G$-equivalences to stable equivalences.

\begin{lemma}\label{lemma: change of enrichment}
    If $\cat{R}$ is a $G$-spectral category, then there is a spectral category $\Gamma(\cat{R})$ with the same objects and morphism orthogonal spectra $\Gamma(\cat{R})(r,s) = \cat{R}(r,s)^G$.  This construction is functorial in enriched functors.  Moreover, if $\Phi\colon \cat{R}\to \cat{O}$ is a stable $G$-equivalence and the morphism $G$-spectra in $\cat{R}$ and $\cat{O}$ are all fibrant then the induced functor $\Gamma(\Phi)\colon \Gamma(\cat{R})\to \Gamma(\cat{O})$ is a stable equivalence. 
\end{lemma}

\begin{proof}
    The functor $\Gamma$ is just the standard change of enrichment functor along fixed points.  If $\Phi\colon \cat{R}\to \cat{O}$ is a $G$-stable equivalence, then for all $r,s\in \cat{R}$ we have stable $G$-equivalences
    \[
        \Phi_{r,s}\colon \cat{R}(r,s)\to \cat{O}(\Phi(r),\Phi(s))
    \]
    and the induced morphism of spectral categories is obtained by applying $(-)^G$ to the map $\Phi_{r,s}$ of $G$-spectra.  Because fixed points are a right Quillen functor, it sends stable $G$-equivalences between fibrant objects to stable equivalences of spectra.  In particular, if $\cat{R}(r,s)$ and $\cat{O}(\Phi(r),\Phi(s))$ are both fibrant, then this is a stable equivalence of spectra.  Thus, if we assume that all mapping spectra in $\cat{R}$ and $\cat{O}$ are fibrant then the induced map $\Gamma(\Phi)$ is a stable equivalence.
\end{proof}

\begin{lemma}\label{lemma: change enrichment of weak equivalences zig zag}
        Suppose that $\cat{R}$ and $\cat{O}$ are two $\Sp^G$-enriched categories connected by a zig-zag of stable $G$-equivalences.  If the morphism $G$-spectra in $\cat{R}$ and $\cat{O}$ are all fibrant, then $\Gamma(\cat{R})$ and $\Gamma(\cat{O})$ are connected by a zig-zag of stable equivalences.
\end{lemma}

\begin{proof}
    By \cref{lemma: change of enrichment}, it suffices to show that $\cat{R}$ and $\cat{O}$ can be connected by a zig-zag of stable $G$-equivalences where all $G$-spectral categories involved have fibrant mapping $G$-orthogonal spectra.  To achieve this, note that stable $G$-equivalences form the weak equivalences in a model category structure on $G$-spectral categories where the fibrant objects are $G$-spectra categories with fibrant mapping $G$-spectra \cite[Theorem 2.16]{Guillou--May:enrichedModelCats}.  If we pick a functorial fibrant replacement and then apply it to the zig-zag we started with we obtain a zig-zag between $\cat{R}$ and $\cat{O}$ satisfying the necessary fibrancy condition.
\end{proof}

\subsection{Compact generators for orthogonal \texorpdfstring{$G$}{TEXT}-spectra}\label{subsec:pf of GM}
We begin by introducing some notation. Let $A$ be a finite $G$-set, and write \[
\mathbb{A}\coloneqq \Sigma_G^\infty A_+ \text{ and } D\mathbb{A} \coloneqq F(\mathbb{A}, \mathbb{S})
\] where $\mathbb{S}$ is the $G$-sphere spectrum, which is cofibrant in the stable model structure.  Pick a fibrant replacement $\widehat{\mathbb{S}}$ for $\mathbb{S}$ and write $\widehat{D}\mathbb{A}:= F(\mathbb{A},\widehat{\mathbb{S}})$. Note that $\widehat{D}\mathbb{A}$ is fibrant because $\mathbb{A}$ is cofibrant and $\widehat{\SSS}$ is fibrant. Recall from \cite[Appendix B]{HHR} that for any finite $G$-set $A$ and any $G$-spectrum $X$, the spectra $\mathbb{A}\wedge X$ and $F(\mathbb{A},X)$ are isomorphic to the indexed wedge and indexed products
\[
    \mathbb{A}\wedge X \cong \bigvee_{a\in A} X,\quad F(\mathbb{A},X) = \prod\limits_{a\in A} X.
\] 
\begin{proposition}[{\cite[Proposition B.52]{HHR}}]\label{proposition: wirthmuller iso}
    The canonical map $\mathbb{A}\wedge X\to F(A,X)$ induced by the map from coproducts to products  is a weak equivalence.  In particular, taking $X = \mathbb{S}$, there exists an equivalence $\alpha\colon \mathbb{A}\to D\mathbb{A}$.
\end{proposition}

\begin{corollary}\label{corollary: finite things preserve weak equivalences}
    If $f\colon X\to Y$ is a stable equivalence of $G$-spectra and $A$ is a finite $G$-set then $f_*\colon F(\mathbb{A},X)\to F(\mathbb{A},Y)$ is a weak equivalence.
\end{corollary}
\begin{proof}
      There is a commutative square
    \[
        \begin{tikzcd}
            \mathbb{A}\wedge X\ar[r,"1\wedge f"] \ar[d,"\sim"] & \mathbb{A}\wedge Y \ar[d,"\sim"]\\
            F(\mathbb{A},X)\ar[r,"f_*"] & F(\mathbb{A},Y)
        \end{tikzcd}
    \]
    where the vertical maps are weak equivalences of \cref{proposition: wirthmuller iso}.  Thus the map $f_*$ is a weak equivalence if and only if the top map is a weak equivalence.  Since $\mathbb{A}$ is the suspension of a finite $G$-CW complex, it is a cellular $G$-spectrum in the sense of \cite[Definition B.57]{HHR}. Hence smashing with $\mathbb{A}$ preserves weak equivalences by \cite[Proposition B.58]{HHR}.
\end{proof}

\begin{lemma}\label{lemma: alpha hat is weak equiv}
    There is a weak equivalence $\widehat{\alpha}\colon\mathbb{A}\to \widehat{D}\mathbb{A}$.
\end{lemma}
\begin{proof}
    We may assume, without loss of generality, that we have chosen a weak equivalence $w\colon \mathbb{S}\to \widehat{\mathbb{S}}$.  The map $\widehat{\alpha}$ is the composite
    \[
        \mathbb{A}\xrightarrow{\alpha} D\mathbb{A}\xrightarrow{w_*} \widehat{D}\mathbb{A}
    \]
     where $\alpha$ is  weak equivalence from \cref{proposition: wirthmuller iso}. To see that $\widehat{\alpha}$ is a weak equivalence it suffices to show that $w_*$ is a weak equivalence. This follows from the fact that $w$ is a weak equivalence and \cref{corollary: finite things preserve weak equivalences}.
\end{proof}

\begin{notation}
 Let $Q\widehat{D}\mathbb{A}$ be a cofibrant replacement of $\widehat{D}\mathbb{A}$.  We make choices of acyclic fibrations
\[
    p\colon Q\widehat{D}\mathbb{A}\xrightarrow{\sim} \widehat{D}\mathbb{A}
\]
which, in particular, implies that $Q\widehat{D}\mathbb{A}$ remains fibrant. Since $\mathbb{A}$ is cofibrant, we can find a map $\ell\colon \mathbb{A}\to Q\widehat{D}\mathbb{A}$ making the square
\[
\begin{tikzcd}
    0\ar[d] \ar[r] & Q\widehat{D}\mathbb{A} \ar[d,"p"]\\
    \mathbb{A}\ar[r,"\widehat{\alpha}"] \ar[ur,"\ell"] & \widehat{D}\mathbb{A}
\end{tikzcd}
\]
commute.  By the two-of-three property $\ell$ is necessarily a weak equivalence.   
\end{notation}

\begin{notation}
    Let $X$, $Y$, and $Z$ be any $G$-spectra.  Given any map $f\colon X\to Y$ there is an induced map
\[
    X\wedge Z\xrightarrow{f\wedge Z} Y\wedge Z
\]
which can be implemented as a map of function spectra
\[
    \mu_Z\colon F(X,Y)\to F(X\wedge Z,Y\wedge Z)
\]
which is adjoint to
\[
    F(X,Y)\wedge X\wedge Z\xrightarrow{\mathrm{ev}\wedge Z} Y\wedge Z.
\]
Similarly, there is a map
\[
    \nu_{Z}\colon F(X,Y)\to F(Z\wedge X,Z\wedge Y)
\]
which implements smashing with $Z$ on the left.
Finally, given any map $h\colon X\to Y$ we write $\lambda_h\colon \SSS\to F(X,Y)$ for the adjoint.
\end{notation}

\begin{notation}
  % We must make one more fibrant replacement.  
  We write $\widehat{\mathbb{A}}$ for a fibrant replacement of $\mathbb{A}$.   We may assume we have made choices of acyclic cofibrations $w\colon\mathbb{A}\to \widehat{\mathbb{A}}$ in $\Sp^G$.  Let $\mathscr{D}\subset \Sp^G$ be the full $G$-spectral subcategory whose objects are ${\mathbb{A}}$, where $A$ runs over all finite $G$-sets.  Let $\widehat{\mathscr{D}}\subset \Sp^G$ be the full $G$-spectral subcategory whose objects are the $\widehat{\mathbb{A}}$. We later consider the full $G$-spectral subcategory of $\Sp^G$ on the objects $Q\widehat{D}\mathbb{A}$, which we denote $\mathscr{D}^{\vee}$. 
  
  \begin{center}
\begin{tabular}{||c | c||} 
 \hline
 & \\ 
Full $G$-spectral subcategory of $\Sp^G$ & Spanning objects \\ [1.5ex] 
 \hline\hline
 & \\
 $\cD$ & $\AA=\Sigma_G^\infty A_+$ for a finite $G$-set $A$ \\[1.5ex]
 \hline
  & \\
 $\widehat{\cD}$ & $\AAA$ (a fixed bifibrant replacement of $\AA$)  \\ [1.5ex]
 \hline
 & \\
 $\cD^\vee$ & $Q\widehat{D}\AA$ (a fixed bifibrant replacement of $F(\AA, \widehat{\SSS})$)  \\  [2ex]
 \hline
\end{tabular}
\end{center}
\end{notation}

\begin{lemma}\label{lemma: quasi equivalence D to D hat}
    There exists a $G$-quasi-equivalence between $\mathscr{D}$ and $\widehat{\mathscr{D}}$.
\end{lemma}

\begin{proof}
    We define a $(\widehat{\cD}, \cD)$-bimodule as the $G$-spectrally enriched functor
    \begin{align*}
        \cM\colon \cD^\op\wedge \widehat{\cD}& \longrightarrow \Sp^G\\
        (\AA, \BBB) & \longmapsto F(\AA, \BBB).
    \end{align*}
    This is simply restricting (in the sense of \cref{remark: restriction of scalars for spectral categories}) the $(\Sp^G, \Sp^G)$-bimodule structure of the internal hom of $\Sp^G$ from \cref{example: Sp^G as a bimodule} using the $G$-spectral subcategories $\widehat{\cD}\subseteq \Sp^G$ and $\cD\subseteq \Sp^G$.
    In more details, it is the bimodule $\cM(\AA, \BBB)\coloneqq F(\AA, \BBB)$ where the action:
    \[
    \widehat{\cD}(\BBB, \BBB') \wedge \cM(\AA, \BBB)\wedge \cD(\AA',\AA) \longrightarrow \cM(\AA', \BBB')
    \]
    is simply the composition:
    \[
    F(\BBB, \BBB') \wedge F(\AA, \BBB) \wedge F(\AA', \AA) \longrightarrow F(\AA', \BBB').
    \]
    For each suspension $G$-spectrum $\AA$, we define a map $\varphi_\AA\colon \bS\to \cM(\AA, \AAA)$ as the composite
    \[
    \begin{tikzcd}
        \bS \ar{r} & F(\AA, \AA) \ar{r}{w_*} & F(\AA, \AAA).
    \end{tikzcd}
    \]
    Here the unlabeled map comes from the $G$-spectrally enriched structure of $\cD$. The map $w_*$ is post-composing with the acyclic cofibration $w\colon \AA\stackrel{\simeq}\hookrightarrow \AAA$.
    Then, notice that the composite
    \[
    \begin{tikzcd}
          F(\AA', \AA) \simeq \bS \wedge F(\AA', \AA) \ar{r}{\varphi_\AA\wedge 1} & F(\AA, \AAA) \wedge F(\AA', \AA) \ar{r} & F(\AA', \AAA)   
    \end{tikzcd}
    \]
    is simply the map
    \[
    \begin{tikzcd}
        F(\AA', \AA) \ar{r}{w_*} & F(\AA', \AAA)
    \end{tikzcd}
    \]
    which is a stable $G$-equivalence as $F(\AA', -)$ preserves stable equivalences by \cref{corollary: finite things preserve weak equivalences}.
    Notice also that the composite
    \[
    \begin{tikzcd}
        F(\BBB, \BBB') \simeq F(\BBB, \BBB')\wedge \bS \ar{r}{1\wedge \varphi_\BB} & F(\BBB, \BBB') \wedge F(\BB, \BBB) \ar{r} & F(\BB, \BBB')
    \end{tikzcd}
    \]
    is simply the precomposition map
    \[
    \begin{tikzcd}
        F(\BBB, \BBB') \ar{r}{w^*} & F(\BB, \BBB')
    \end{tikzcd}
    \]
    which is a stable $G$-equivalence as the functor $F(-, \BBB')$ sends acyclic cofibrations to acyclic fibrations, since $\Sp^G$ is a closed monoidal model category and $\BBB'$ is fibrant.
\end{proof}

%  Since $Q\widehat{D}\mathbb{A}$ is fibrant we can find an weak equivalence $\widehat{\ell}\colon \widehat{\mathbb{A}}\to Q\widehat{D}\mathbb{A}$ making the square
% \[
%     \begin{tikzcd}
%         \mathbb{A} \ar[r,"\ell"] \ar[d,"w"] & Q\widehat{D}\mathbb{A} \ar[d] \\
%         \widehat{\mathbb{A}} \ar[r] \ar[ur,"\widehat{ell}"] & 0
%     \end{tikzcd}
% \]
% commute.

The following proposition provides the main technical input for our variation on the Guillou--May theorem. 

\begin{proposition}\label{proposition: quasi equiv prop}
    There is a $G$-quasi-equivalence between $\mathscr{D}^{\op}$ and $\mathscr{D}^{\vee}$.
\end{proposition}
We defer the proof to the end of the section. Combining this result with \cref{lemma: quasi equivalence D to D hat}, we obtain a zig-zag of stable $G$-equivalences between $\widehat{\cat{D}}^{\op}$ and $\cat{D}^{\vee}$. Since both of these $G$-spectral categories have fibrant mapping orthogonal $G$-spectra, we apply \cref{lemma: change enrichment of weak equivalences zig zag} to obtain the following corollary.

\begin{corollary}
    There exists a zig-zag of stable equivalences between $\Gamma(\widehat{\cat{D}}^{\op})$ and $\Gamma(\cat{D}^{\vee})$.
\end{corollary}
\begin{remark}
    We note that because taking fixed points is symmetrically lax monoidal, the change of enrichment functor $\Gamma$ from $G$-spectral categories to spectral categories commutes with taking opposite categories. In particular, $\Gamma(\widehat{\cat{D}}^{\op})\cong \Gamma(\widehat{\cat{D}})^{\op}$.
\end{remark}

We can now prove the main theorem of this appendix.

\begin{proof}[{Proof of \cref{thm:GM no op}}]\label{proof of covariant guillou-may}
    Since $\mathscr{D}^{\vee}$ contains objects weakly equivalent to the suspensions of finite $G$-sets, by \cref{lemma: alpha hat is weak equiv}, it contains a collection of compact generators for the equivariant stable homotopy category.  By the Schwede--Shipley theorem \cite[Theorem 3.3.3]{Schwede-Shipley}, there is a zig-zag of $\Sp$-enriched Quillen equivalences between $\Sp^G$ and $\Sp$-enriched presheaves on $\Gamma(\mathscr{D}^{\vee})$.  By \cref{proposition: quasi equiv prop}, the category of spectral presheaves on $\Psi(\mathscr{D}^{\vee})$ is Quillen equivalent to the category of spectral functors $\Fun_{\Sp}(\Gamma(\mathscr{\widehat{D}}),\Sp)$.  By \cite[Theorem 2.6]{GuillouMay:2011}, and \cref{lemma: change enrichment of weak equivalences zig zag} there is a zig-zag of stable equivalences of spectral categories between $K^{\GMMO}_{\bullet}(\mathcal{A}_{\PC})$ and $\Gamma(\widehat{\mathscr{D}})$ which induces a Quillen equivalence on functor categories.  Putting all this together, we have established zig-zags
    \[
        \Sp^G\leftrightarrow \Fun(\Gamma(\cat{D}^{\vee})^{\op},\Sp) \leftrightarrow \Fun(\Psi(\widehat{\cat{D}}),\Sp)\leftrightarrow \Fun(K^{\GMMO}_{\bullet}(\mathcal{A}_{\PC}),\Sp)
    \]
    of Quillen equivalences.
\end{proof}

\begin{remark}\label{remark: co-representable functors}
    We note that the string of stable equivalences relating $\Gamma(\cat{D})$ and $K_{\bullet}^{\GMMO}(\mathcal{A}^G_{\PC})$ are all the identity on objects.  Thus the spectral functor co-represented by a finite $G$-set $A$ in $K^{\GMMO}_{\bullet}(\mathcal{A}^{G}_{\PC})$ corresponds, under the string of Quillen equivalences, to the functor represented by $Q\widehat{D}\mathbb{A}\in \Gamma(\cat{D}^{\vee})$.  Thus, in the equivariant stable homotopy category, the representable functor $K^{\GMMO}_{\bullet}(\mathcal{A}^G_{\PC})(A,-)$ is isomorphic to $\mathbb{A}\simeq Q\widehat{D}\mathbb{A}$.
\end{remark}

\begin{proof}[Proof of \cref{proposition: quasi equiv prop}]
%The remainder of this appendix contains the proof of \cref{proposition: quasi equiv prop}.  
To begin, we describe the $(\mathscr{D}^{\op},\mathscr{D}^{\vee})$-bimodule, i.e.\ right $(\cD^\vee \wedge \cD)$-module, which implements the $G$-quasi-equivalence between these two $G$-spectrally enriched categories.  Define the bimodule as 
\begin{align*}
\cM\colon (\cD^\vee)^\op \wedge \cD^\op &\longrightarrow \Sp^G\\
(Q\widehat{D}\AA, \BB) & \longmapsto F(Q\widehat{D}\mathbb{A}\wedge \mathbb{B},\widehat{\SSS}).
\end{align*}
% \[\cat{M}(A,B) = F(Q\widehat{D}\mathbb{A}\wedge\mathbb{B},\widehat{\SSS}).\] 
Recall from \cref{example: Sp^G as a bimodule} that $\Sp^G$ is a bimodule via its internal hom. In particular, for any orthogonal $G$-spectrum $X$, the functor $F(-,X)\colon (\Sp^G)^\op\to \Sp^G$ is a morphism of $G$-spectral categories. Applying this to $X=\widehat{\SSS}$, we obtain a $((\Sp^G)^\op, \Sp^G)$-bimodule
\[
\begin{tikzcd}
    (\Sp^G)^\op\wedge (\Sp^G)^\op \ar{r}{-\wedge -} & [1em](\Sp^G)^\op \ar{r}{F(-, \widehat{\SSS})} & [1em]\Sp^G
\end{tikzcd}
\]
by using the left $(\Sp^G\wedge \Sp^G)$-module structure given by the monoidal product in $\Sp^G$ from \cref{example: Sp^G as a bimodule}.
Then by restricting onto the $G$-spectral full subcategories $\cD$ and $\cD^\vee$, in the sense of \cref{remark: restriction of scalars for spectral categories}, we obtain the desired $(\mathscr{D}^{\op},\mathscr{D}^{\vee})$-bimodule $\cM$ above.
More explicitly, the bimodule structure maps can be described as follows.  For finite $G$-sets $A$, $B$, and $C$ the composite 
\begin{align*}
    \cM(B,C)\wedge \mathscr{D}^{\vee}(A,B) & = F(Q\widehat{D}\mathbb{B}\wedge \mathbb{C},\widehat{\SSS})\wedge F(Q\widehat{D}\mathbb{A}, Q\widehat{D}\mathbb{B})\\
    & \xrightarrow{\cong} F(Q\widehat{D}\mathbb{B},\widehat{D}\mathbb{C})\wedge F(Q\widehat{D}\mathbb{A}, Q\widehat{D}\mathbb{B})\\
    & \xrightarrow{\circ} F(Q\widehat{D}\mathbb{A}\widehat{D}\mathbb{C}) \cong \cM(A,C)
\end{align*}
serves as the right action map for the bimodule while
\begin{align*}
    \mathscr{D}^{\op}(B,C)\wedge \cM(A,B) &  = F(\mathbb{C},\mathbb{B})\wedge F(Q\widehat{D}\mathbb{A}\wedge \mathbb{B},\widehat{\SSS})\\
    & \xrightarrow{\nu_{Q\widehat{D}\mathbb{A}}\wedge 1} F(Q\widehat{D}\mathbb{A}\wedge \mathbb{C},Q\widehat{D}\mathbb{A}\wedge\mathbb{B})\wedge F(Q\widehat{D}\mathbb{A}\wedge \mathbb{B},\widehat{\SSS})\\
    &\xrightarrow{\cong }F(Q\widehat{D}\mathbb{A}\wedge \mathbb{B},\widehat{\SSS})\wedge F(Q\widehat{D}\mathbb{A}\wedge \mathbb{C},Q\widehat{D}\mathbb{A}\wedge\mathbb{B}) \\
    & \xrightarrow{\circ} F(Q\widehat{D}\mathbb{A}\wedge \mathbb{C},\widehat{\SSS}) = \cM(A,C)
\end{align*}
serves as the left action map.  
% It is straightforward, albeit tedious, to check that these maps endow $\cat{M}$ with the structure of a bimodule.  The essential point is that the right action is given, essentially, by postcomposition while the left action is given, essentially, by precomposition.  Thus the two actions commute with one another, and are associative and unital.  We leave the details to the interested reader.

We immediately remark that the fundamental reason this is a quasi-equivalence is because of the zig-zag of weak equivalences
\[
    \mathscr{D}^{\op}(B,A) = F(\mathbb{A},\mathbb{B})\xleftarrow{\ell_A^*} F(Q\widehat{D}\mathbb{A},\mathbb{B})\xrightarrow{({\ell_B}_*)}F(Q\widehat{D}\mathbb{A},Q\widehat{D}\mathbb{B}) = \cat{D}^{\vee}(A,B).
\]

For any finite $G$-set $A$, we let the unit map $\bS\to \cM(A,A)$ be the map
\[
    \phi_A\colon \bS\to F(Q\widehat{D}\mathbb{A}\wedge \mathbb{A},\widehat{\bS})\cong F(Q\widehat{D}\mathbb{A},\widehat{D}\mathbb{A})
\]
which picks out the adjunct of the acyclic fibration $p\colon Q\widehat{D}\mathbb{A}\to \widehat{D}\mathbb{A}$.  For any finite $G$-set $B$, the composite
\[
    \bS\wedge \cD^{\vee}(B,A)\to \cat{M}(A,A)\wedge \cD^{\vee}(B,A)\to \cat{M}(B,A)
\]
is the map
\[
    F(Q\widehat{D}\mathbb{B},Q\widehat{D}\mathbb{A})\to F(Q\widehat{D}\mathbb{A},\widehat{D}\mathbb{A})
\]
is just post composition with the weak equivalence $p\colon Q\widehat{D}\mathbb{A}\to \widehat{D}\mathbb{A}$.  Since $Q\widehat{D}\mathbb{A}$ is cofibrant and the source and target of $p$ are fibrant this is a weak equivalence.  

All that is left is to check that the composite
\begin{equation}\label{equation: right unit equivalence.}
    \cD^{\op}(A,B)\wedge \bS\xrightarrow{1\wedge \phi_A} \cD^{\op}(A,B)\wedge \cat{M}(A,A)\to \cat{M}(A,B)
\end{equation}
is a weak equivalence for any finite $G$-sets $A$ and $B$.  This is more involved than the previous verification and requires chasing a rather large diagram.  For brevity, we abbreviate $Q\widehat{D}\mathbb{A}$ to simply $Q$.  Consider the following diagram:
\[
\begin{tikzpicture}[baseline= (a).base]
\node[scale=0.90] (a) at (1,1){
\begin{tikzcd}
	{F(\mathbb{B},\mathbb{A})\wedge \SSS} && { F(\mathbb{B},\mathbb{A})\wedge F(Q \wedge \mathbb{A},\widehat{\SSS})} & { F(Q\wedge \mathbb{B},Q\wedge \mathbb{A})\wedge F(Q\wedge \mathbb{A},\widehat{\SSS})} & {F(Q\wedge \mathbb{B},\widehat{\SSS})} \\
	{F(\mathbb{B},\mathbb{A})} &&& {F(\mathbb{A}\wedge \mathbb{B},Q\wedge \mathbb{A})\wedge F(Q\wedge \mathbb{A},\widehat{\SSS})} \\
	&&& {F(\mathbb{A}\wedge \mathbb{B},\mathbb{A}\wedge \mathbb{A})\wedge F(Q\wedge \mathbb{A},\widehat{\SSS})} & {F(\mathbb{A}\wedge \mathbb{B},\widehat{\SSS})} \\
	&& { F(\mathbb{B},\mathbb{A})\wedge F(\mathbb{A}\wedge \mathbb{A},\widehat{\SSS})} & { F(\mathbb{A}\wedge \mathbb{B},\mathbb{A}\wedge \mathbb{A})\wedge F(\mathbb{A}\wedge \mathbb{A},\widehat{\SSS})} \\
	{F(\mathbb{B},\widehat{D}\mathbb{A})} &&&& {F(\mathbb{B}\wedge \mathbb{A},\widehat{\SSS})}
	\arrow[""{name=0, anchor=center, inner sep=0}, "{1\wedge \phi_{A}}", from=1-1, to=1-3]
	\arrow["\cong"', from=1-1, to=2-1]
	\arrow["{1\wedge\lambda}"', from=1-1, to=4-3]
	\arrow["{\nu_Q\wedge 1}", from=1-3, to=1-4]
	\arrow["{\mu_A\wedge 1}"', curve={height=12pt}, from=1-3, to=3-4]
	\arrow["{1\wedge (\ell\wedge 1)^*}"', from=1-3, to=4-3]
	\arrow["{\circ'}", from=1-4, to=1-5]
	\arrow["{(\ell\wedge 1)^*\wedge 1}", from=1-4, to=2-4]
	\arrow["{\sim (\ell\wedge 1)^*}", from=1-5, to=3-5]
	\arrow["{\widehat{\alpha}_* \sim}"', from=2-1, to=5-1]
	\arrow["{\circ'}", from=2-4, to=3-5]
	\arrow["{(\ell\wedge 1)_*\wedge 1}"', from=3-4, to=2-4]
	\arrow["{1\wedge(\ell\wedge 1)^*}", from=3-4, to=4-4]
	\arrow["\cong", from=3-5, to=5-5]
	\arrow[""{name=1, anchor=center, inner sep=0}, "{\mu_A\wedge 1}", from=4-3, to=4-4]
	\arrow["{\circ'}"', from=4-4, to=3-5]
	\arrow[""{name=2, anchor=center, inner sep=0}, "\cong"', from=5-1, to=5-5]
	\arrow["{(1)}"{pos=0.3}, draw=none, from=0, to=4-3]
	\arrow["{(2)}", draw=none, from=1, to=2]
\end{tikzcd}
};  
\end{tikzpicture}
\]
where $\lambda\colon \SSS\to F(\mathbb{A}\wedge \mathbb{A},\widehat{\SSS})$ is the map which picks outs the composite
\[
    \mathbb{A}\wedge \mathbb{A}\xrightarrow{\widehat{\alpha}\wedge 1} \widehat{D}\mathbb{A}\wedge A\xrightarrow{\mathrm{ev}} \widehat{\SSS}
\]
and $\circ'$ is the composition precomposed with the symmetry isomorphism.
Since $\phi_A$ is picking out the composite
\[
    Q\wedge \mathbb{A}\xrightarrow{p\wedge 1} \widehat{D}\mathbb{A}\xrightarrow{\mathrm{ev}} \widehat{\SSS}
\]
the triangle (1) commutes because $\widehat{\alpha} = p\circ \ell$, by definition of $\widehat{\alpha}$.

The region (2) commutes because either way of going around computes the adjoint of a map obtain by post composition with $\widehat{\alpha}\colon \AA\to \widehat{D}\AA$.  The remaining regions in the diagram all commute as a result of standard diagram chases in a closed monoidal category.  The left vertical map $\widehat{\alpha}_*$ is a weak equivalence because $\widehat{\alpha}$ is a weak equivalence and $B$ is a finite $G$-CW complex.  The right vertical map $(\ell\wedge 1)^*$ is an equivalence because $\ell$ is a weak equivalence, $\widehat{\SSS}$ is fibrant, and $Q$, $A$, and $B$ are all cofibrant.  Thus the composite along the top is a weak equivalence.  Since this is precisely \cref{equation: right unit equivalence.} the proof is complete.
\end{proof}
%---------------------------------
%---------------------------------
\bibliographystyle{alpha}
\bibliography{sample}
\end{document}